\theoremstyle{plain}
\newtheorem{thm}{Theorem}[section]
\newtheorem{lem}[thm]{Lemma}
\newtheorem{prop}[thm]{Proposition}
\newtheorem{ques}{Question}
\newtheorem{conj}{Conjecture}
\newtheorem{cor}[thm]{Corollary}
\theoremstyle{definition}
\newtheorem{de}[thm]{Definition}
\newtheorem{rem}[thm]{Remark}
\def \N {\mathbb N}
\def \Z {\mathbb Z}
\def \R {\mathbb R}
\def \O {\mathcal{O}}
\def \G {\mathcal{G}}
\def \X {\mathcal{X}}
\def \a {\alpha }
\def \ep {\epsilon}
\def \d {\delta}
\def \D {\Delta}
\begin{document}

\title{Minimal systems with finitely many ergodic measures}

\author{Wen Huang}

\author{Zhengxing Lian}
\author{Song Shao}
\author{Xiangdong Ye}

\address{Wu Wen-Tsun Key Laboratory of Mathematics, USTC, Chinese Academy of Sciences and
Department of Mathematics, University of Science and Technology of China,
Hefei, Anhui, 230026, P.R. China.}

\email{wenh@mail.ustc.edu.cn}
\email{lianzx@mail.ustc.edu.cn}
\email{songshao@ustc.edu.cn}
\email{yexd@ustc.edu.cn}

\subjclass[2010]{Primary: 37B05; 54H20}

\thanks{This research is supported by NNSF of China (11971455, 11731003, 11571335,  11431012).}

\date{}

\begin{abstract}
In this paper it is proved that if a minimal system has the property that its sequence entropy
is uniformly bounded for all sequences, then it has only finitely
many ergodic measures and is an almost finite to one extension of its maximal equicontinuous factor.
This result is obtained as an application of a general criteria which states that  if a minimal
system is an almost finite to one extension of its maximal
equicontinuous factor and has no infinite independent sets of length  $k$  for some $k\ge 2$, then it has only finitely
many ergodic measures.


\end{abstract}

\maketitle





\section{An introduction and the summary of the results}

In this section we first give the background of the study and then state the main results of the paper.

\subsection{The background}\
\medskip

A {\em topological dynamical system} is a pair $(X,
T)$, where $X$ is a compact metric space and $T : X \rightarrow  X$
is a homeomorphism. Denote by $M(X)$ the set of all Borel
probability measures on $X$. Let $M_T(X)=\{\mu\in M(X):
T_*\mu=\mu\circ T^{-1}=\mu\}$ be the set of all $T$-invariant
Borel measures of $X$. With the weak$^*$-topology, $M_T(X)$ is a compact convex space. By Krylov-Bogolioubov
theorem $M_T(X)\neq \emptyset$.  Denote by $M^{erg}_T(X)$ the set of ergodic measures of $(X,T)$, then $M^{erg}_T(X)$
is the set of extreme points of $M_T(X)$ and one can use the Choquet representation theorem to express each member
of $M_T(X)$ in terms of the ergodic members of $M_T(X)$. That is, for each $\mu\in M_T(X)$ there is a unique measure
$\tau$ on the Borel subsets of the compact space $M_T(X)$ such that $\tau(M^{erg}_T(X))=1$ and $\mu=\int_{M^{erg}_T(X)}m d\tau (m)$,
which is called the {\em ergodic decomposition} of $\mu$.

\medskip

Usually, the set $M^{erg}_T(X)$ may be very big, and thus it is interesting to consider the case when $M^{erg}_T(X)$ is small.
The extreme case is that $M^{erg}_T(X)$ consists of only one member, and in this case $(X,T)$ is said to be {\em uniquely ergodic}.
Uniquely ergodic systems are common, have lots of very nice properties and are very important in the study of dynamical systems.
For example, the well known
Jewett-Krieger's theorem asserts that every ergodic system is measurably isomorphic to a uniquely ergodic topological
system \cite{Jewett, Krieger}. The systems with only finitely many ergodic measures are also very common. For example, if $(X,T)$
is uniquely ergodic with a unique measure $\mu$, 
then  $(X,T^n)$ ($n\in\Z, n\not= 0$) has only finitely many ergodic measures.

\medskip
While there are lots of criteria for the unique ergodicity of a system, there are
very few conditions under which a system may have only finitely many ergodic measures. We now
state such a condition in \cite{Bo85} obtained for minimal subshifts by Boshernitzan.
Let $(\Omega, \sigma)$ be a minimal subshift over a finite alphabet.
Denote by $P(n)$ be the number of different $n$-blocks which appear in any $\omega\in \Omega$.
In \cite{Bo85}, it is shown that if there is some $r\ge 2$
such that $\liminf_{n\to+\infty} \left( P(n)-rn\right)=-\infty$, then $|M^{erg}_\sigma(\Omega)|\le r-1$; and if
$\liminf_{n\to+\infty} \frac{P(n)}{n}=\a<\infty$, then $|M^{erg}_\sigma(\Omega)|\le \max\{[\a],1\}$, where $[\a]$ denotes
the integer part of $\a$. Also it is shown that if $\limsup_{n\to+\infty} \frac{P(n)}{n}< 3$, then the system is uniquely ergodic.
We note that recently, the result was extended to a subshift of linear growth without the assumption of
minimality by Cyr and Kra \cite{CK-19}. It was proved that a subshift of linear growth has finitely many nonatomic
ergodic measures and thus has at most countably many ergodic measures (with no requirement
that the measures are nonatomic).

\medskip

In this paper we will give some other conditions when a system may have only finitely many ergodic measures.
To look for such conditions it is natural to consider minimal systems which are close to equicontinuous ones.
The class of systems we study in this paper is the collection of minimal systems having no $k$ infinite independent sets for some $k\ge 2$,
or no $k$ tuples with arbitrarily long finite independent sets for some $k\ge 2$.

\medskip
We remark that the case when $k=2$, namely the tame or null systems
were extensively studied in the literature.
A null topological dynamical system was defined by using the notion of sequence entropy.
Sequence entropy for a measure was introduced as an isomorphism
invariant by Kushnirenko \cite{Ku}, who used it to distinguish
between transformations with the same entropy and spectral
invariant. In the same paper, it was also shown that an invertible measure preserving
transformation has discrete spectrum if and only if it is null (the sequence entropy of the system is zero
for any sequence). Let
$(X,\mathcal{X}, \mu, T)$ be an ergodic system. Then the supremum of all sequence
entropies of $T$ is either $\log k$ for some $k\in \N$ or infinite \cite{Hu}.
The topological sequence entropy was introduced by Goodman in \cite{Go}.
Also, for a topological dynamical system $(X,T)$, the supremum of all topological sequence
entropies (denoted by $h_\infty(X, T)$ or $h^*(T)$) is either $\log k$ for some $k\in \N$ or infinite \cite{HY}.
It was shown \cite{HLSY} that if a minimal topological dynamical system is null
(the topological sequence entropy is zero for any sequence),
then it is uniquely ergodic, has discrete spectrum and is an almost one to one extension of its maximal equicontinuous factor.

\medskip

The concept of tameness was introduced by K\"{o}hler in \cite{Ko}. Here we follow the definition of
Glasner \cite{G06}. A topological dynamical system $(X,T)$ is said to be {\em tame} if its
enveloping semigroup is separable and Fr\'{e}chet, and it is said to be {\em non-tame} otherwise. It is known that a minima null system is tame \cite{KL}. A structure theorem for minimal tame systems has been established in \cite{H, KL, G07, G18}, i.e., a minimal tame system it is uniquely ergodic, has discrete spectrum and is an almost one
to one extension of its maximal equicontinuous factor. Recently, a striking result proved by Fuhrmann, Glasner, J\"{a}ger and Oertel solved a long open question, i.e.
the authors showed that a minimal tame system is regular \cite{FGJO}.

\medskip

In the sequel we will state the main results and some open
questions. From now on we will focus on topological dynamical systems under general group actions. We start by recalling some notions.







\subsection{Topological transformation groups}\
\medskip

A {\em topological dynamical system} (t.d.s for short) is a triple
$\X=(X, T, \Pi)$, where $X$ is a compact Hausdorff space, $T$ is a
Hausdorff topological group and $\Pi: T\times X\rightarrow X$ is a
continuous map such that $\Pi(e,x)=x$ and
$\Pi(s,\Pi(t,x))=\Pi(st,x)$, where $e$ is the unit of $T$, $s,t\in T$ and $x\in X$. We shall fix $T$ and suppress the
action symbol. Note that in the literatures, $\X$ is also called a {\em
topological transformation group} or a {\em flow}.

To be simple, we always assume that $T$ is infinite countable and discrete, unless we state explicitly in some places.
Moreover, we always assume that $X$ is a compact metric space with metric $d(\cdot, \cdot)$.

\medskip

Let $(X,T)$ be a t.d.s. and $x\in X$, then $\O(x,T)=\{tx: t\in T\}$ denotes the
{\em orbit} of $x$, which is also denoted by $T x$. We usually denote the closure of $\O(x,T)$ by $\overline{\O}(x,T)$, or $\overline{Tx}$. A subset
$A\subseteq X$ is called {\em invariant} if $t a\subseteq A$ for all
$a\in A$ and $t\in T$. When $Y\subseteq X$ is a closed and
$T$-invariant subset of the system $(X, T)$ we say that the system
$(Y, T)$ is a {\em subsystem} of $(X, T)$. If $(X, T)$ and $(Y,
T)$ are two t.d.s. their {\em product system} is the
system $(X \times Y, T)$, where $t(x, y) = (tx, ty)$ for any $t\in T$ and $x,y\in X$.

\medskip

A t.d.s. $(X,T)$ is called {\em minimal} if $X$ contains no proper non-empty
closed invariant subsets. 
It is easy to verify that a t.d.s. is
minimal if and only if every orbit is dense.

\medskip

A {\it factor map} $\pi: X\rightarrow Y$ between the t.d.s. $(X,T)$
and $(Y, T)$ is a continuous onto map which intertwines the
actions; we say that $(Y, T)$ is a {\it factor} of $(X,T)$ and
that $(X,T)$ is an {\it extension} of $(Y,S)$.
The systems are said to be {\it isomorphic} if $\pi$ is bijective.

\medskip

A t.d.s. $(X,T)$ is {\em equicontinuous} if for any $\ep>0$, there is a $\d>0$ such that whenever $x,y\in X$ with  $d(x,y)<\d$,
then $d(tx,ty)<\ep$ for all $t\in T$. Let $(X,T)$ be a t.d.s. There is a smallest invariant equivalence relation
$S_{eq}$ such that the quotient system $(X/S_{eq},T)$ is equicontinuous \cite{EG60}. The equivalence relation
$S_{eq}$ is called the {\em equicontinuous structure relation} and the factor $(X_{eq}=X/S_{eq}, T)$ is called
the {\em maximal equicontinuous factor} of $(X,T)$.

\medskip

Let $\pi:(X,T)\rightarrow (Y,T)$ be an extension of t.d.s.
We call that $\pi$ is {\em finite to one} if each fiber is finite, and {\em almost finite to one} if
there is a residual set $Y_0\subset Y$ such that for each $y\in Y_0$, the fiber of $y$ is finite.
We note that if $(Y,T)$ is minimal this is equivalent to say that there is a finite fiber.
If there is some $N\in \N$ and a dense $G_\d$ set $X_0$ of $X$ such that for each $x\in X_0$, the cardinality of the
fiber $\pi^{-1}(\pi(x))$ is $N$, then we also call $\pi$ is almost $N$ to 1. If a t.d.s. $(X,T)$ is minimal and is an almost one to one
extension of some equicontinuous system then we call it an {\em almost automorphic} system.

\subsection{A general criteria}\
\medskip


Let $(X,T)$ be a t.d.s. and $k\in \N$. We say that $(X,T)$ has no $k$-IT-tuple if for any tuple of
closed non-empty disjoint subsets $U_1$, $U_2$, $\ldots$, $U_{k}$ of $X$ there is no an infinite independence
set for them, i.e. for any infinite set $S\subseteq T$, there is some $a\in \{1,2,\ldots,k\}^S$ such that
      $$\bigcap_{t\in S}t^{-1}U_{a_t}=\emptyset.$$


The following is one of the main results of the paper.

\medskip
\noindent {\bf Theorem A}: 
{\it Let $(X,T)$ be a minimal system and let $T$ be an infinite countable discrete amenable group. If
$\pi: (X,T)\rightarrow (X_{eq},T)$ is almost finite to one, and there is some integer $k\ge 2 $ such that $(X,T)$  has no $k$-IT-tuples,
then $(X,T)$ has only finitely many ergodic measures.}

\medskip
Let us give a brief description of the idea of the proof. To prove Theorem A first we show that there is $N\in\N$ such that the set
$\{y\in X_{eq}: |\pi^{-1}(y)|=N\}$ is residual in $X_{eq}$ (Proposition \ref{almost N-1}). Then using the hyperspace technique,
we lift $\pi$ to an open $N$ to one map $\pi'$ through almost one to one extensions as follows
\begin{equation*}
  \xymatrix{
  X \ar[d]_{\pi}
                & X' \ar[d]^{\pi'} \ar[l]_{\sigma} \\
  X_{eq}
                & Y'   \ar[l]_{\tau}          }
\end{equation*}
where $\sigma$ and $\tau$ are almost one-to-one extensions. We can show that the length of any IT-tuple of $Y'$ is bounded by
$(k-1)^N$. At the same time the number of the ergodic measures of $Y'$ is bounded by the same number
 (Proposition \ref{key-prop-numberofIT}). Since $\pi'$ is $N$ to one, we conclude that the number of the
ergodic measures of $X'$ is bounded by $N(k-1)^N$, and so does $X$, ending the proof.
We mention that to obtain Proposition \ref{key-prop-numberofIT}
we need to use some result (Proposition \ref{independent sets}) which is a generalization of the previous one
obtained by Fuhrmann, Glasner, J\"{a}ger and Oertel \cite{FGJO}.

\medskip

We have the following remarks:

\begin{enumerate}
  \item In \cite[Subsection 5.3]{FGJO}, it was shown that there is a minimal system which is an at most
  two-to-one and almost one to one extension of its maximal equicontinuous factor, 
  but exhibits two distinct ergodic invariant measures. This system has 2-IT-tuples but no 3-IT-tuples.
  \item In Theorem A 
  we can only prove $(X,T)$ has only finitely many ergodic measures,
  and we are not able to get an upper bound only depending on $k$.
  We mention that if $\pi$ is almost one to one, then there are at most $k-1$ ergodic measures,
  see Proposition \ref{key-prop-numberofIT}.

  \item In fact, if $T$ is abelian, we can show that $(X,T)$ is an almost $N$ to one
  extension with $N\le k-1$. The same proof of Remark \ref{N'N} can be applied here.
\end{enumerate}



By the proof of Theorem A, 
we have the following corollary.

\medskip
\noindent {\bf Corollary B}:
{\it Let $(X,T)$ be a minimal system  and let $T$ be an infinite countable discrete amenable group. If
$\pi: (X,T)\rightarrow (X_{eq},T)$ is finite to one, then $(X,T)$ has only finitely many ergodic measures.}


\medskip

We believe that in Theorem A, 
the condition that $(X,T)$ is an almost finite to one extension of its maximal equicontinuous factor
is superfluous. To be precise, we have the following conjecture:

\begin{conj}
Let $(X,T)$ be a minimal system with $T$ infinite countable discrete. If there is some integer $k\ge 2 $ such that $(X,T)$  has no $k$-IT-tuples,
then $(X,T)$ is an almost finite to one  extension of its maximal equicontinuous factor. 
\end{conj}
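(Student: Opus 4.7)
The plan is to argue by contrapositive: assuming $\pi\colon(X,T)\to(X_{eq},T)$ is not almost finite to one, I would construct a $k$-IT-tuple. First, since $(X,T)$ is minimal and $T$ acts equicontinuously on $X_{eq}$, the fiber-cardinality function $y\mapsto|\pi^{-1}(y)|$ is $T$-invariant; a short semi-continuity argument (in the spirit of Proposition~\ref{almost N-1}) then forces a dichotomy: either $\pi$ is almost $N$-to-one for some finite $N$, or $|\pi^{-1}(y)|=\infty$ on a residual set of $X_{eq}$. Under the negation of the conclusion only the second alternative survives. Fix such a $y_0$, pick pairwise distinct $x_1,\ldots,x_k\in\pi^{-1}(y_0)$, and fix pairwise disjoint closed neighborhoods $U_i\ni x_i$. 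The task reduces to producing an infinite $S\subseteq T$ with $\bigcap_{t\in S}t^{-1}U_{a_t}\neq\emptyset$ for every $a\in\{1,\ldots,k\}^S$.

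To produce $S$, I would combine two ingredients. Equicontinuity of $T$ on $X_{eq}$ makes the return-time set $N(y_0,V)$ syndetic for every neighborhood $V$ of $y_0$ and makes $\pi^{-1}(V)$ a coherent neighborhood of the whole fiber $\pi^{-1}(y_0)$. The enveloping semigroup $E(X)$ controls how the fiber points are permuted or matched along such returns. The target is to show that the $E(X)$-action on $\pi^{-1}(y_0)$ is combinatorially rich, in the sense that a sufficiently large family of functions $\{1,\ldots,k\}\to\{x_1,\ldots,x_k\}$ arises as restrictions of elements $p\in E(X)$. Approximating each such $p$ by elements of $T$ along the syndetic returns and concatenating a long schedule of such returns should then yield, by a standard compactness argument, an infinite independence set for $(U_1,\ldots,U_k)$.

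The main obstacle is precisely this combinatorial richness step: an infinite fiber alone does not obviously force that every, or even enough, choice functions $\{1,\ldots,k\}\to\{x_1,\ldots,x_k\}$ appear among restrictions of $E(X)$-elements. For $k=2$ the analogous statement is a consequence of Rosenthal's $\ell^1$-dichotomy for the enveloping semigroup together with the Bourgain--Fremlin--Talagrand theorem, as exploited in \cite{FGJO}. A genuine $k$-fold analogue would be needed here, and standard Banach-space dichotomies alone do not obviously supply one. A plausible route is to first reduce, via the structure theorem for minimal amenable group actions, to a PI-tower over $X_{eq}$, and then analyse distal levels one at a time: at each level, infinite fibers carry a nontrivial Ellis-group action, and one would try to extract the required richness from this action, perhaps by a Hales--Jewett or van der Waerden style colouring argument inside the tower. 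This is the step where I expect a direct proof to run into serious difficulty, and where any new input beyond the $k=2$ case will have to come in.
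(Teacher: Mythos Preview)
The statement you are attempting is \emph{Conjecture~1} in the paper; the paper does not prove it. It is stated immediately after Theorem~A as an open problem, and the authors remark that it is closely related to the second conjecture (about proximal, non almost one-to-one extensions forcing $k$-IT tuples for all $k$). So there is no ``paper's own proof'' to compare against.

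What the paper does prove is the weaker Theorem~\ref{Main-bd}: if $T$ is abelian and $(X,T)$ is \emph{bounded} (equivalently, has no $k$-IN-tuples for some $k$, i.e.\ no arbitrarily long finite independence sets), then $\pi:X\to X_{eq}$ is almost finite to one. The route there is quite different from yours: it invokes the Maass--Shao structure theorem (Proposition~\ref{MS-main}) to get a diagram with proximal extensions $\sigma',\tau'$ and a finite-to-one equicontinuous $\pi'$, and then shows via Proposition~\ref{proximal-extension} that a proximal, non almost one-to-one extension forces multiple $r$-sensitivity for all $r$, hence $h_\infty=\infty$. That argument genuinely needs sequence entropy (IN-tuples), not IT-tuples, and needs $T$ abelian for both Proposition~\ref{MS-main} and Lemma~\ref{goodman}.

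Your proposal is honest about its own gap, and the gap is real: the ``combinatorial richness'' step---that an infinite fiber over $X_{eq}$ forces enough $E(X)$-maps on $\{x_1,\dots,x_k\}$ to build an infinite independence set for $(U_1,\dots,U_k)$---is exactly the content of the conjecture and is not supplied by any known dichotomy for $k\ge 3$. The $k=2$ case you cite (via Rosenthal/BFT, as in \cite{FGJO,G18}) is special, and the paper's own partial result bypasses this issue entirely by working with IN rather than IT and leaning on structure theory available only in the abelian bounded setting. In short: your outline is a reasonable description of what a proof would have to do, but it does not close the gap, and neither does the paper.
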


In fact this conjecture is closely related to the following one

\begin{conj} Let $(X,T)$ be a minimal t.d.s. with $T$  infinite countable discrete amenable and $\pi:X\rightarrow X_{eq}$ be the factor map
to the maximal equicontinuous factor of $(X,T)$. If $\pi$ is proximal and not almost one to one,
then for each $k\ge 2$, there is a $k$-IT tuple.
\end{conj}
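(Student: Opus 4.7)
The plan is to exploit the proximality of $\pi$ together with the failure of almost-one-to-oneness to produce, for each $k\geq 2$, a minimal subsystem in a suitable fibered power whose coordinates carry enough dynamical independence to yield a $k$-IT-tuple.

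First I would reduce to the case where every fiber has at least two points. Since $\pi$ is a closed continuous surjection between compact metric spaces, $y\mapsto \pi^{-1}(y)$ is upper semicontinuous as a set-valued map, so $Y_0:=\{y\in X_{eq}:|\pi^{-1}(y)|=1\}$ is a $T$-invariant $G_{\delta}$ subset of $X_{eq}$. By minimality of $X_{eq}$, $Y_0$ is either empty or residual, and the assumption that $\pi$ is not almost one to one rules out residuality. Hence every fiber of $\pi$ contains at least two (automatically proximal) points.

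Next, fix $k\geq 2$ and pass to the $k$-fold fibered product $W_k:=X\times_{X_{eq}}\cdots\times_{X_{eq}}X$ ($k$ copies). Using that each fiber $\pi^{-1}(y)$ has at least two points, I would select a minimal subsystem $Z\subseteq W_k$ whose generic fiber over $X_{eq}$ supports at least $k$ tuples that pairwise differ in some coordinate; the projection $Z\to X_{eq}$ remains a proximal extension, since its fibers lie inside the proximal fibers of $\pi$. One then attempts to transfer this fiber multiplicity into an infinite independence set in $X$. The natural route is a Kerr--Li style argument (using the equivalence between the existence of a $k$-IT-tuple and the non-Fr\'{e}chet character of the enveloping semigroup acting on $k$-tuples): the minimality of $Z$ together with the proximal action on fibers should force precisely the Ellis-semigroup non-tameness equivalent to a $k$-IT-tuple of $X$, obtained by projecting $Z$ onto $k$ chosen coordinates. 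Amenability of $T$ enters through the Haar measure on $X_{eq}$ and a Rokhlin-tiling argument used to locate the required infinite independence set inside $T$.

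The main obstacle is ensuring that, as $k$ grows, a minimal $Z$ of the sort above does exist and does not collapse onto a lower-dimensional diagonal in $W_k$. In spirit this is the converse of Remark (3) following Theorem A, where in the abelian case one bounds the generic fiber size by $N\leq k-1$ using the absence of $k$-IT-tuples; here one wants the non-almost-one-to-one hypothesis to force arbitrarily rich fiber patterns. Carrying this out for general amenable $T$ appears to require either a refined Ellis-semigroup / Rosenthal $\ell^{1}$ analysis of the proximal action on fibers, or a new dichotomy for minimal proximal extensions of equicontinuous systems: such an extension is either almost one to one, or its fibered powers generate infinite independence sets of every finite length.
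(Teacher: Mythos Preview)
The statement you are attempting to prove is labeled \emph{Conjecture} in the paper, and the paper does \emph{not} supply a proof. It is stated as an open problem, closely tied to Conjecture~1. What the paper does prove is a partial analogue in the abelian case (Proposition~\ref{proximal-extension}): if $T$ is abelian and $\pi:X\to Y$ is a proximal extension of minimal systems that is not almost one to one, then $(X,T)$ is multiple $r$-sensitive for every $r\ge 2$, and hence $h_\infty(X,T)=\infty$. Via Lemma~\ref{exist of se} this yields nontrivial $k$-IN-tuples (equivalently, sequence entropy $k$-tuples) for every $k$, but \emph{not} $k$-IT-tuples. So even in the abelian case the conjecture as stated (for IT rather than IN) is not established in the paper.

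Your proposal is not a proof either, and you acknowledge this yourself. The reduction in your first paragraph is fine (indeed, for minimal $Y$, ``not almost one to one'' is equivalent to every fiber having at least two points, by Proposition~\ref{almost N-1} with $N=1$). But after that the argument dissolves into a wish list: you would like a minimal subsystem $Z\subset W_k$ with off-diagonal projections, and you would like to ``transfer fiber multiplicity into an infinite independence set,'' but you give no mechanism for either step. The sentence invoking ``the equivalence between the existence of a $k$-IT-tuple and the non-Fr\'echet character of the enveloping semigroup acting on $k$-tuples'' is not a known theorem for $k>2$; the Fr\'echet/tame characterization concerns IT-\emph{pairs} only. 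Likewise, amenability of $T$ and Haar measure on $X_{eq}$ do not by themselves manufacture infinite independence sets; in the paper the only place Haar measure is used to locate independence sets is Proposition~\ref{independent sets}, which requires as input compact sets of positive measure with disjoint interiors, data you have not produced.

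The honest summary is that your ``main obstacle'' paragraph is exactly the content of the conjecture: showing that a proximal, non-almost-one-to-one extension of an equicontinuous system forces infinite independence sets of every length. The paper's method in the abelian case (via multiple sensitivity and sequence entropy) gives IN-tuples; upgrading to IT-tuples, and extending from abelian to general amenable $T$, are precisely the open points.
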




\subsection{Bounded minimal systems}\

\medskip

We may use sequence entropy to give a very succinct criteria for systems with only finitely many ergodic measures.

\medskip


In this article we focus on  minimal t.d.s. with
bounded topological sequence entropy. That is, there is a positive
real number $M$ such that for each increasing sequence of positive
integer numbers, the topological entropy of the system along this
sequence is bounded by $M$. This class of systems was studied in \cite{HLSY} and \cite{MS}.
As another main result, a structure theorem for a bounded minimal system is obtained. That is,

\medskip
\noindent {\bf Theorem C}: 
{\it If $(X,T)$ is a bounded minimal system with $T$ abelian, then it is an almost finite to one extension of
its maximal equicontinuous factor, and it has finitely many ergodic measures.}

\medskip
Let us give a brief account of the idea of the proof. Maass and Shao \cite{MS} proved that
under the assumption of the theorem we have
\[
\begin{CD}
X @<{\sigma'}<< X'\\
@VV{\pi}V      @VV{\pi'}V\\
X_{eq} @<{\tau' }<< Y'
\end{CD}
\]
where $X_{eq}$ is the maximal equicontinuous factor of $X$, $\sigma'$ and $\tau'$ are proximal extensions, and $\pi'$ is an
$N$ to one extension for some $N\in \N$. So it is left to prove that in fact $\sigma'$ and $\tau'$ are almost one to one according to Theorem A.
It is done by showing that if $(X,T)$ is minimal and $\pi:X\rightarrow X_{eq}$ is proximal and not almost one to one,
then $h_\infty(X,T)=\infty$, i.e. $(X,T)$ is not bounded,
see Proposition \ref{proximal-extension}.

\medskip


It is easy to see that by the Rohlin's skew-product theorem, and the fact that for each invariant measure
the sequence entropy is bounded by the topological sequence entropy, each ergodic measure from Theorem C
can be expressed as skew product of a Kronecker system with a periodic system. It is an
interesting question to understand the finer structure of the ergodic measures. In the following we
will state a question on the uniqueness of the measures.

\medskip

Let us see an example first. Let $\tau$ be the substitution $\tau(0)=01$ and $\tau(1)=10$. By
concatenating, this map can be defined on any finite word
$w=w_0\ldots w_{l-1}$ in $\{0,1\}$:
$\tau(w)=\tau(w_0)\ldots\tau(w_{l-1})$. For any $n\geq 2$ define
$\tau^n(w)=\tau(\tau^{n-1}(w))$. Finally define $X\subseteq
\{0,1\}^\Z$ to be the set of biinfinite binary sequences $x$ in
$X$ such that any finite word in $x$ is a subword of $\tau^n(0)$
for some $n \in \N$. The t.d.s. $(X,T)$, where $T$ is the
left shift map is called a Morse system. It is well known that it is
minimal and has the following structure: $\pi_1: X\to Y$ and
$\pi_2: Y \to X_{eq}$ where $\pi_1$ is a $2$-to-one distal
extension and $\pi_2$ is an asymptotic extension (so almost one to
one) \cite{Vr}. Thus $\pi=\pi_2\circ\pi_1: X\rightarrow X_{eq}$ is almost 2-to-1 and
$h_\infty(X,T)=\log 2$. The Morse system is uniquely ergodic, and $\pi$ is regular almost 2-to-1.

\medskip

Inspired by the structure of the Morse system, we have the following question.
First we give a definition. Let $(X,T)$ be a minimal system and $\pi: X\rightarrow X_{eq}$ be the extension of its maximal equicontinuous
factor. Let $\pi$ be an almost $N$  to 1 and let $Y_N=\{y\in X_{eq}: |\pi^{-1}(y)|=N\}$. If $m(Y_N)=1$,
then we say that $\pi$ is a {\em regular almost $N$  to 1} map, where $m$ is the Haar measure on $X_{eq}$.
In \cite{FGJO}, it was shown that a minimal tame system is regular. 
\begin{ques}
Let $(X,T)$ be a minimal system with $T$ abelian. Assume that $\pi: (X,T)\rightarrow (X_{eq},T)$ is almost
$N$-to-1 and $h_\infty (X,T)=\log N$. Is it true that $(X,T)$ has a structure as  $X\rightarrow Y\rightarrow X_{eq}$,
where $(Y,T)$ is the maximal null factor and $X\rightarrow Y$ is open $N$ to one? Moreover,
is it true that $(X,T)$ is uniquely ergodic? Is it true that $\pi$ is regular almost $N$ to 1?
\end{ques}

We remark that when $N=1$, it is true, that is a minimal null system is uniquely ergodic \cite{HLSY} and regular \cite{FGJO}.



\subsection{The Sarnak conjecture}\
\medskip

Let $X$ be a compact metric space and $f: X\rightarrow X$ a homeomorphism.
We say that a topological dynamical system $(X,f)$ satisfies the {\em Sarnak conjecture} if for
every continuous function $g$ on $X$ and every $x\in X$, the Ces\`{a}ro averages
$$\frac{1}{N}\sum_{n=1}^Ng(f^nx){\bf \mu}(n)$$
tend to $0$ as $N\to \infty$.
We say that a topological dynamical system $(X,f)$ satisfies the {\em logarithmic Sarnak conjecture}
if for every continuous function $g$ on $X$ and every $x\in X$, the logarithmic averages
$$\frac{1}{\log N}\sum_{n=1}^N\frac{g(f^nx){\bf \mu}(n)}{n}$$
tend to $0$ as $N\to \infty$. Note that the Sarnak conjecture for a system implies
the logarithmic Sarnak conjecture for the same system. Frantzikinakis and Host \cite{FH}
showed that if $(X,f)$ is a t.d.s. with zero topological entropy
and has countably many ergodic invariant measures. Then $(X,f)$ satisfies the logarithmic Sarnak conjecture.
Thus, together with the mentioned result of Frantzikinakis and Host \cite{FH} and Theorem C, we have

\medskip
\noindent {\bf Corollary D}:
{\it Any bounded minimal system satisfies the logarithmic Sarnak conjecture.}

\medskip

To end the section we ask
\begin{ques}
Does any bounded minimal system satisfy the Sarnak conjecture?
\end{ques}


\bigskip

We organize the paper as follows. 
After introducing necessary notations and results in Section \ref{sec-pre}, we prove the main results in
Section \ref{sec-proof-main} and Section \ref{sec-SequenceEntropy}. In the appendix we will give
the proof of Proposition~\ref{independent sets}.

\bigskip

\noindent{\bf Acknowledgments:} We would like to thank Hanfeng Li, Jian Li and Tao Yu for their very useful comments.

\section{Preliminaries}\label{sec-pre}

In the article, integers, nonnegative integers and natural numbers
are denoted by $\Z$, $\Z_+$ and $\N$ respectively.

\subsection{The Ellis semgroup}\
\medskip

Given a t.d.s. $(X,T)$, the {\em Ellis semigroup} $E(X,T)$ associated to
$(X, T )$ is defined as the closure of $\{x\mapsto tx: t\in T\}\subset X^X$ in the product topology,
where the semi-group operation is given by the composition \cite{E601}. On $E(X,T)$, we may consider the
$T$-action given by $E(X,T)\rightarrow E(X,T), p\mapsto tp$. A well known result by Ellis says that
$(X,T)$ is equicontinuous if and only if $E(X,T)$ is a topological group \cite[Theorem 3, Chapter 3]{Au88}.

\begin{thm}\cite[Theorem 6, Chapter 3]{Au88}\label{Thm-Ellis}
Let $(X,T)$ be an equicontinuous minimal t.d.s., and let $x\in X$.
Let $\Gamma= \Gamma_x = \{p\in E(X,T):px=x\}$. Then $\Gamma$ is a closed subgroup
of $E(X,T)$, $T$ acts on the space of right cosets $\{p\Gamma : p\in E(X,T)\}$, by
$t(p\Gamma)=(tp)\Gamma, (t\in T)$, and the system $(E(X,T)/\Gamma,T)$ is isomorphic with $(X,T)$.

If $T$ is abelian, then $\Gamma =\{e\} $ and $(X,T)$ is isomorphic to $(E(X,T),T)$.
\end{thm}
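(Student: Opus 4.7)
The plan is to leverage the fact, cited just before the statement, that equicontinuity forces $E(X,T)$ to be a topological group. Given this, the basic engine of the proof will be the orbit (evaluation) map $\mathrm{ev}_x : E(X,T) \to X$, $p \mapsto px$, which is continuous because evaluation at a point is always continuous on $E(X,T) \subseteq X^X$, and which is surjective because $X = \overline{Tx}$ by minimality and $E(X,T) \supseteq T$.

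First I would show that $\Gamma$ is a closed subgroup. Closedness is immediate: $\Gamma = \mathrm{ev}_x^{-1}(\{x\})$. The subgroup property uses the group structure on $E(X,T)$: if $px=x$ and $qx=x$, then $(pq)x = p(qx) = px = x$, and applying $p^{-1}$ to $px=x$ gives $p^{-1}x = x$. Next I would verify that the action $t(p\Gamma) = (tp)\Gamma$ is well-defined: if $p\Gamma = q\Gamma$, then $p = q\gamma$ with $\gamma \in \Gamma$, and in the group $E(X,T)$ this gives $tp\Gamma = (tq)\gamma\Gamma = (tq)\Gamma$; continuity of the action on $E/\Gamma$ follows from continuity of left multiplication by $t$ on $E(X,T)$ and the fact that the quotient map is open.

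The heart of the argument is then to check that the map $\phi : E(X,T)/\Gamma \to X$, $\phi(p\Gamma) = px$, is an isomorphism of $T$-systems. By the definition of $\Gamma$, $\phi$ is well-defined and injective; by surjectivity of $\mathrm{ev}_x$ it is onto; by construction it is $T$-equivariant. Continuity of $\phi$ follows from continuity of $\mathrm{ev}_x$ and the universal property of the quotient. Since $E(X,T)/\Gamma$ is compact (quotient of a compact Hausdorff space by a closed equivalence relation) and $X$ is Hausdorff, $\phi$ is automatically a homeomorphism, giving the required isomorphism.

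Finally, for the abelian case, I would use that $T$ abelian implies $E(X,T) = \overline{T}$ is abelian. Take any $p \in \Gamma$ and any $y \in X$; by surjectivity of $\mathrm{ev}_x$ choose $q \in E(X,T)$ with $qx = y$. Then commutativity gives $py = p(qx) = (pq)x = (qp)x = q(px) = qx = y$, so $p$ fixes every point of $X$, i.e.\ $p$ is the identity element of $E(X,T)$; hence $\Gamma = \{e\}$ and $\phi$ becomes an isomorphism $E(X,T) \cong X$. The only step that needs a little care rather than formal unwinding is verifying that $E(X,T)/\Gamma$ is genuinely compact Hausdorff so that continuous bijections onto Hausdorff spaces are homeomorphisms; the rest is essentially a diagram chase that uses equicontinuity only through the single input that $E(X,T)$ is a topological group.
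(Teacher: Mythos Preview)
Your proof is correct and is essentially the standard argument; the paper itself does not supply a proof but simply cites \cite[Theorem~6, Chapter~3]{Au88}, so there is no ``paper's own proof'' to compare against. The only minor point worth tightening is the surjectivity of $\mathrm{ev}_x$: rather than just saying $E(X,T)\supseteq T$, you should note that $E(X,T)$ is compact, hence $\mathrm{ev}_x(E(X,T))$ is a closed subset of $X$ containing $Tx$, and therefore equals $\overline{Tx}=X$ by minimality.
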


\subsection{Independence and tameness}\
\medskip

One may use independence sets to give an equivalent definition of tameness.

\begin{de}
Let $(X,T)$ be a t.d.s. For a tuple $A=(A_1,A_2,\ldots, A_k)$ of subsets of $X$,
we say that a set $J\subseteq T$ is an {\em independence set} for $A$ if for
every nonempty finite subset $I\subseteq J$ and function
$\sigma: I\rightarrow \{1,2,\ldots, k\}$ we have $$\bigcap_{s\in I} s^{-1} A_{\sigma(s)}\neq \emptyset.$$
\end{de}

\begin{de}\cite{KL}
Let $(X,T)$ be a t.d.s. and $n\ge 2$.
We call a tuple $x=(x_1,\ldots,x_n)\in X^n$ an {\em IT-tuple} (or an {\em IT-pair} if $n = 2$)
if for any product neighbourhood $U_1\times U_2\times \ldots \times U_n$ of $x$ in $X^n$
the tuple $(U_1,U_2,\ldots, U_n)$ has an infinite independence sets. We denote the set of IT-tuples of length $n$ by ${\rm IT}_n (X, T)$.
\end{de}

The diagonal of $X^n$ is defined by
$$\Delta_n(X)=\{(x,\ldots,x)\in X^n: x\in X\}$$ and put
$$\Delta^{(n)}(X)=\{(x_1,\ldots,x_n) \in X^n: \text{ for some
$i\neq j$ }, x_i=x_j  \}.$$ When $n=2$ one writes
$\Delta(X)=\Delta_2(X)=\Delta^{(2)}(X)$.

\begin{prop}\cite[Proposition 6.4]{KL}\label{factor and IT tuple}
Let $(X,T)$ be a t.d.s. and $n\geq 2$.
\begin{enumerate}
  \item Let $(A_1,\ldots, A_n )$ be a tuple of non-empty closed subsets of $X$ which has infinite
  independence sets. Then there exists an IT-tuple $(x_1,\ldots,x_n)$ with $x_j\in A_j$ for all $1\le j\le n$.
  \item  ${\rm IT}_2(X, T) \setminus \D_2(X)$ is nonempty if and only if $(X, T)$ is non-tame.
  \item ${\rm IT}_n (X, T)$ is a closed T-invariant subset of $X^n$.
  \item Let $\pi: (X,T)\rightarrow (Y,T)$ be a factor map. Then $\pi^{(n)}( {\rm IT}_n(X,T))={\rm IT}_n(Y,T)$,
  where $\pi^{(n)}: X^n\rightarrow Y^n$ defined by $\pi^{(n)}(x_1,x_2,\ldots, x_n)=(\pi(x_1),\pi(x_2),\ldots, \pi(x_n))$.
  \item Suppose that $Z$ is a closed $T$-invariant subset of $X$. Then ${\rm IT}_n (Z, T)\subseteq  {\rm IT}_n (X, T)$.
\end{enumerate}
\end{prop}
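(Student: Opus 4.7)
The plan is to handle the five items by separating the routine manipulations, namely (3), (5), and the forward direction of (4), from the substantive content, which is item (1) together with the reverse direction of (4) and item (2).

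For (3), closedness of ${\rm IT}_n(X,T)$ is immediate: any product neighbourhood $U_1\times\cdots\times U_n$ of a limit $(x_1,\ldots,x_n)$ of a convergent sequence of IT-tuples contains a tail term, inheriting its infinite independence set. $T$-invariance follows by the substitution $s'=ts$: an independence set $J$ for $(t^{-1}V_1,\ldots,t^{-1}V_n)$ becomes an independence set $tJ$ for $(V_1,\ldots,V_n)$. Item (5) is the observation that a neighbourhood in $Z$ is the restriction of a neighbourhood in $X$, with $\bigcap_{s\in I}s^{-1}(U_{\sigma(s)}\cap Z)\subseteq\bigcap_{s\in I}s^{-1}U_{\sigma(s)}$, so independence sets transfer verbatim. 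The forward inclusion $\pi^{(n)}({\rm IT}_n(X,T))\subseteq {\rm IT}_n(Y,T)$ in (4) is similar: if $(V_j)$ are neighbourhoods of $(\pi(x_j))$, then $(\pi^{-1}(V_j))$ are neighbourhoods of $(x_j)$, and $\bigcap_{s}s^{-1}\pi^{-1}(V_{\sigma(s)})=\pi^{-1}(\bigcap_{s}s^{-1}V_{\sigma(s)})$, so a witness on the left projects to a witness on the right.

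The reverse direction of (4) and item (2) both reduce to item (1). For the reverse half of (4), a compactness argument shrinking neighbourhoods of each $y_j$ shows that the closed fibre tuple $(\pi^{-1}(y_1),\ldots,\pi^{-1}(y_n))$ admits an infinite independence set, and (1) then lifts this data to an IT-tuple inside $\prod\pi^{-1}(y_j)$. For (2), the Bourgain-Fremlin-Talagrand theorem identifies non-tameness of $(X,T)$ with the existence of some $f\in C(X)$ and an infinite $\{t_k\}\subseteq T$ such that $\{f\circ t_k\}$ is equivalent to the $\ell^1$-basis; a standard separation produces disjoint closed sets $A_1,A_2$ and an infinite independence set for $(A_1,A_2)$, and (1) yields an IT-pair inside $A_1\times A_2$, necessarily off the diagonal. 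The converse is elementary: an off-diagonal IT-pair yields disjoint closed neighbourhoods and a separating continuous function whose translates along the independence set are $\ell^1$-equivalent.

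The main obstacle is item (1). Enumerate an infinite independence set $J=\{s_1,s_2,\ldots\}$. For each $\sigma:J\to\{1,\ldots,n\}$, compactness and the finite intersection property yield $x_\sigma\in\bigcap_i s_i^{-1}A_{\sigma(s_i)}$, so that $s_i\cdot x_\sigma\in A_{\sigma(s_i)}$ for every $i$. The plan is to construct iteratively a nested sequence of closed tuples $(A_1^k,\ldots,A_n^k)$ with $A_j^0=A_j$, $A_j^{k+1}\subseteq A_j^k$, $\mathrm{diam}(A_j^k)\to 0$, together with nested infinite independence sets $J^k$, so that in the limit $\bigcap_k A_j^k=\{y_j\}$ for some $y_j\in A_j$ and $(y_1,\ldots,y_n)$ is an IT-tuple: any product neighbourhood $(V_j)\supseteq (A_j^k)$ (valid for large $k$) inherits $J^k$ as an independence set. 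The refinement step is the combinatorial heart: partition each $A_j^k$ into finitely many closed pieces of small diameter, and apply the infinite Ramsey theorem to finite colourings of $J^k$ indexed by which piece the orbit points $s\cdot x_\sigma$ fall into, extracting a refined tuple and an infinite monochromatic subset that still serves as an independence set. The delicate point is running this Ramsey argument uniformly across all finite colourings $\sigma$ simultaneously rather than only constant ones, which is the combinatorial core of the Kerr-Li argument.
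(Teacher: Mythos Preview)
The paper does not supply its own proof of this proposition; it is quoted from Kerr--Li \cite{KL} and used as a black box. So there is nothing in the paper to compare against, and the question is only whether your sketch stands on its own.

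Your handling of (3), (5), the forward inclusion in (4), and the outline of (2) is correct. There is, however, a real slip in your argument for the reverse inclusion in (4). You claim that shrinking neighbourhoods and compactness show that the fibre tuple $(\pi^{-1}(y_1),\ldots,\pi^{-1}(y_n))$ itself admits an infinite independence set, after which you invoke (1). But an independence set for this tuple would force $\bigcap_{s\in I}s^{-1}\pi^{-1}(y_{\sigma(s)})=\pi^{-1}\bigl(\bigcap_{s\in I}s^{-1}\{y_{\sigma(s)}\}\bigr)\neq\emptyset$, i.e.\ an independence set for the \emph{singleton} tuple $(\{y_1\},\ldots,\{y_n\})$ in $Y$; this is strictly stronger than $(y_1,\ldots,y_n)$ being an IT-tuple, and there is no compactness argument that produces it, since the infinite independence sets attached to shrinking neighbourhoods need share no common infinite subset. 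The correct order of operations is the reverse: for each system of closed neighbourhoods $\overline{V_j}\ni y_j$ one has an infinite independence set for $(\pi^{-1}(\overline{V_1}),\ldots,\pi^{-1}(\overline{V_n}))$, so (1) already yields an IT-tuple in $\prod_j\pi^{-1}(\overline{V_j})$; now shrink the $V_j$ and use the closedness of ${\rm IT}_n(X,T)$ from (3) together with compactness of $X^n$ to extract a limit IT-tuple lying over $(y_1,\ldots,y_n)$.

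For (1) your plan is the right one, and you correctly flag the delicate step. The precise combinatorial input is the splitting lemma in Kerr--Li: if $(A_1,\ldots,A_n)$ has an infinite independence set and $A_1=A_1'\cup A_1''$ with $A_1',A_1''$ closed, then one of $(A_1',A_2,\ldots,A_n)$ and $(A_1'',A_2,\ldots,A_n)$ again has an infinite independence set. Iterating this one coordinate and one piece at a time across a finite small-diameter cover of each $A_j^k$ gives your refinement step without needing to run Ramsey simultaneously over all $\sigma$.
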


\subsection{Sequence entropy (maximal pattern entropy) and independence}\
\medskip


Let $(X, T)$ be t.d.s. Consider an infinite sequence $A= \{  t_i \}_{i=1}^\infty\subset T$ and a finite open cover $\mathcal{U}$ of $X$. The {\it
topological sequence entropy} of $\mathcal{U}$ with respect to
$(X,T)$ along $A$ is
$$h_A(T,\mathcal{U})= \limsup_{n\rightarrow \infty}\frac{1}{n}\log
{N}(\bigvee_{i=1}^{n}t_i^{-1}\mathcal{U}),$$ where
${N}(\bigvee_{i=1}^{n}t_i^{-1}\mathcal{U})$ is the minimal
cardinality among all cardinalities of subcovers of
$\bigvee_{i=1}^{n}t_i^{-1}\mathcal{U}$. Recall that for open
covers $\mathcal{U}$ and $\mathcal{V}$ of $X$, $\mathcal{U}
\bigvee \mathcal{V}=\{ U \cap V : U \in \mathcal{U}, V \in
\mathcal{V} \}$.

The {\it topological sequence entropy of $(X,T)$} along $A$ is
$$h_A(X,T)= \sup_{\mathcal{U}} h_A(T,\mathcal{U}),$$
where the
supremum is taken over all finite open covers of $X$.


Finally the {\em sequence entropy} of $(X,T)$ is defined
by
$$h_{\infty}(X,T)=\sup h_A(X,T),$$
where the supremum ranges over all infinite sequences of $T$. The sequence entropy of a
system is also called {\em the maximal pattern entropy} \cite{HY}.

\medskip

An important fact is as follows:
\begin{thm}\cite{HY}
Let $(X,T)$ be a t.d.s. Then
\begin{equation*}
  h_\infty(X,T)\in \{\log n: n\in \N\}\cup \{\infty\}.
\end{equation*}
\end{thm}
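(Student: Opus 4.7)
My plan is to reduce $h_\infty(X,T)$ to a combinatorial complexity whose possible values are manifestly integers. First I would fix a finite open cover $\mathcal{U}=\{U_1,\ldots,U_k\}$ of $X$ and, for $t_1,\ldots,t_n\in T$, introduce the symbolic image
$$S_\mathcal{U}(t_1,\ldots,t_n)=\Bigl\{(a_1,\ldots,a_n)\in\{1,\ldots,k\}^n : \bigcap_{i=1}^n t_i^{-1}U_{a_i}\ne\emptyset\Bigr\},$$
and set $p^*_\mathcal{U}(n)=\sup\{|S_\mathcal{U}(t_1,\ldots,t_n)|:t_1,\ldots,t_n\text{ distinct in }T\}$. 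A direct comparison shows that $N(\bigvee_{i=1}^n t_i^{-1}\mathcal{U})$ and $|S_\mathcal{U}(t_1,\ldots,t_n)|$ have the same exponential growth rate, so that
$$h_\infty(X,T)=\sup_\mathcal{U}\limsup_{n\to\infty}\frac{1}{n}\log p^*_\mathcal{U}(n).$$

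The heart of the proof is to show, for each $\mathcal{U}$, the equality $\limsup_{n\to\infty}\frac{1}{n}\log p^*_\mathcal{U}(n)=\log r(\mathcal{U})$, where $r(\mathcal{U})\in\{1,\ldots,k\}$ is the largest $r$ such that some $r$ distinct members of $\mathcal{U}$ admit an infinite independence set in the sense of the paper. The lower bound is immediate: if $J\subseteq T$ witnesses an infinite independence set for distinct members $U_{i_1},\ldots,U_{i_{r(\mathcal{U})}}$, then choosing $t_1,\ldots,t_n$ from $J$ forces $p^*_\mathcal{U}(n)\ge r(\mathcal{U})^n$. For the upper bound I would argue by a Sauer--Shelah style dichotomy: if the exponential rate strictly exceeded $\log r(\mathcal{U})$, then for each $m\in\N$ one could find $r(\mathcal{U})+1$ distinct indices in $\{1,\ldots,k\}$ and $m$ elements of $T$ realizing the full cube $\{1,\ldots,r(\mathcal{U})+1\}^m$ inside the symbolic image. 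A compactness/Ramsey extraction (pigeonholing both the choice of $(r(\mathcal{U})+1)$-tuple of indices and the growing witnessing subsets of $T$) then produces an infinite independence set for some fixed $r(\mathcal{U})+1$ distinct members of $\mathcal{U}$, contradicting the maximality of $r(\mathcal{U})$.

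Finally, since $r(\mathcal{U})\in\N$ for every $\mathcal{U}$, the value $\sup_\mathcal{U}\log r(\mathcal{U})$ is either $\log n$ for some $n\in\N$ (when $\sup_\mathcal{U}r(\mathcal{U})$ is finite, in which case it is attained at some sufficiently fine cover) or $+\infty$, and the theorem follows. The hardest part will be the Sauer--Shelah-type upper bound for $k$-ary alphabets tailored to IN-structure, together with the extraction of a single infinite independence set out of arbitrarily long finite ones; the latter must be done carefully so that the witnessing $(r(\mathcal{U})+1)$-tuple of members of $\mathcal{U}$ stabilizes along the passage to the limit, since otherwise one only obtains finite independence sets of unbounded length rather than a genuine infinite one.
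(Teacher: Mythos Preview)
The paper does not prove this theorem; it is quoted from \cite{HY} and used as a black box. So there is no ``paper's own proof'' to compare against here. Your overall strategy --- reducing $h_\infty$ to the exponential growth of a symbolic pattern complexity $p^*_{\mathcal U}(n)$ and then identifying that growth rate with the logarithm of a combinatorial integer via a Sauer--Shelah/Karpovsky--Milman type lemma --- is exactly the approach of \cite{HY} (and underlies the IN-tuple formalism of \cite{KL}).

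There is, however, a genuine gap in your plan: your quantity $r(\mathcal U)$ is defined via \emph{infinite} independence sets (the IT-condition), whereas the combinatorial argument you outline only produces \emph{arbitrarily long finite} independence sets (the IN-condition). You are aware of this --- you flag the ``compactness/Ramsey extraction'' as the hardest step --- but in fact that step is impossible in general. Concretely, take any minimal system that is tame but not null (such systems exist; see \cite{KL}). Then there is a nondiagonal IN-pair $(x_1,x_2)$, so for suitable disjoint open $U_1,U_2$ one has arbitrarily long finite independence sets and hence $p^*_{\mathcal U}(n)\ge 2^n$; yet tameness forbids any infinite independence set for any pair of closed sets, so your $r(\mathcal U)=1$. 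Thus $\limsup_n\frac1n\log p^*_{\mathcal U}(n)=\log 2\neq\log r(\mathcal U)$, and your central identity fails. No amount of pigeonholing or diagonalising over the finite witnesses $J_m\subset T$ will rescue this: the tree of finite independence sets is not finitely branching, so K\"onig-type arguments do not apply.

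The fix is to replace IT by IN throughout: define $r(\mathcal U)$ as the largest $r$ such that some $r$ members of $\mathcal U$ admit arbitrarily long finite independence sets. Then the lower bound is even more immediate, and the upper bound follows directly from the $k$-ary Sauer--Shelah lemma (if $|S_{\mathcal U}(t_1,\ldots,t_n)|$ exceeds the relevant polynomial-times-$r^n$ threshold, some $(r{+}1)$-subset of cover elements is shattered on a large coordinate set), with no passage to an infinite set required. Pigeonholing over the finitely many $(r{+}1)$-subsets of $\{1,\ldots,k\}$ then gives the contradiction with maximality of $r(\mathcal U)$. This is precisely the route taken in \cite{HY}, and it is why Lemma~\ref{exist of se} and Theorem~2.13 in the present paper are phrased in terms of $SE_n=IN_n$ rather than $IT_n$.
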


\begin{de}
Let $(X,T)$ be a t.d.s. $(X,T)$ is called
\begin{enumerate}
  \item {\em null} if $h_{\infty}(X,T)=0$;
  \item {\em bounded} if $h_{\infty}(X,T)<\infty$;
  \item {\em
unbounded} if $h_{\infty}(X,T)=\infty$.
\end{enumerate}
\end{de}

By an {\it admissible cover} $\mathcal{U}$ of $X$ one means that
$\mathcal{U}$ is finite and if $\mathcal{U}=\{U_1,\ldots,U_n\}$
then $( \bigcup _{j\neq i} U_j)^c$ has nonempty interior for each
$i\in \{1,\ldots,n\}$. Let $(x_1,\ldots,x_n) \in X^{n}$ and
$\mathcal{U}=\{U_1,\ldots,U_n \}$ be a finite cover of $X$. One
says $\mathcal{U}$ is an {\em admissible cover} with respect to
$(x_1,\ldots,x_n)$ if for each $U_i$,  $i\in \{1,\ldots,n\}$,
there exists $j_i \in \{1,\ldots,n\}$ such that $x_{j_i}$ is not
in the closure of $U_{i}$.

\begin{de}
Let $(X,T)$ be a t.d.s. and $n\ge 2$. An $n$-tuple $(x_1,\ldots,x_n)
\in X^{n}\setminus \D_n(X)$ is a {\em sequence entropy $n$-tuple}
($n$-SET) if whenever $V_1,\ldots,V_n$ are closed mutually
disjoint neighborhoods of $x_1,\ldots,x_n$ respectively, there is
some infinite sequence $A\subset T$ such that the open
cover $\mathcal {U} =\{ V_1^c,\ldots, V_n^c\}$ has positive
sequence entropy along $A$, i.e. $h_{A}(T,\mathcal {U})>0$.
\end{de}

It is easy to see that an $n$-tuple $(x_1,\ldots,x_n) \in
X^{n}\setminus \D_n(X)$ is an $n$-SET if and only if for any
admissible open cover $\mathcal{U}$ with respect to
$(x_1,\ldots,x_n)$ one has $h_{A}(T,\mathcal {U})>0$ for some sequence $A\subset T$.

For $n\ge 2$ one denotes by $SE_n(X,T)$ the set of $n$-SET. In the
case $n=2$ one speaks about pairs instead of tuples and one writes
$SE(X,T)$. The proof of the following result is analogous to the
corresponding one in \cite{B2} (see \cite[Propositions 2, 3, 4 and 5]{B2}).

\begin{prop}\cite[Proposition 2.6.]{MS}\label{prop:basicseq}
 Let $(X,T)$ be a t.d.s. and $n\geq 2$.
\begin{enumerate}
    \item If $\mathcal {U}=\{ U_1,\ldots, U_n\}$ is an admissible
    open cover of $X$
with $h_{A}(T,\mathcal {U})>0$ for some sequence $A\subset T$, then for each $i \in \{1,\ldots,n\}$ there
exists $x_i \in U_i^c$ such that $(x_1,\ldots,x_n)$ is an n-SET.

    \item $SE_n(X,T)\cup \Delta_n(X)$ is a nonempty closed
$T$-invariant subset of $X^{n}$.

    \item Let $\pi : (X,T)\rightarrow (Y,T)$ be a factor map. Then $\pi^{(n)}(SE_n(X,T))=SE_n(Y,T)$.

\item Let $W$ be a closed $T$-invariant subset of $(X,T)$.
$SE_n(W,T)\subseteq SE_n(X,T)$.
\end{enumerate}
\end{prop}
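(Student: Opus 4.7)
My approach follows the blueprint Blanchard used for entropy $n$-tuples, with the additivity inequality replaced by the sequence-entropy analogue $h_A(T,\mathcal{U}\vee\mathcal{V})\le h_A(T,\mathcal{U})+h_A(T,\mathcal{V})$. This passes through the $\limsup$ because, at each finite stage $n$, the counting function $N(\bigvee_{i\le n} t_i^{-1}(\mathcal{U}\vee\mathcal{V}))$ is dominated by the product $N(\bigvee_{i\le n} t_i^{-1}\mathcal{U})\cdot N(\bigvee_{i\le n} t_i^{-1}\mathcal{V})$, and $\limsup$ is subadditive. I would establish this as a preparatory lemma and, crucially, keep the witnessing sequence $A$ fixed throughout each argument, to avoid extracting subsequences at every step.

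For part (1) my approach is a Zorn-style minimality argument. Fix $A$ with $h_A(T,\mathcal{U})>0$ and consider the family $\mathcal{F}$ of tuples $(F_1,\dots,F_n)$ of nonempty closed sets with $F_i\subseteq U_i^c$ such that the open cover $\{F_1^c,\dots,F_n^c\}$ still has positive $A$-entropy. Order $\mathcal{F}$ by pointwise reverse inclusion; chains have upper bounds via intersections, since a decreasing chain of nonempty closed sets in a compact space has nonempty intersection, and positive entropy persists at the limit by an upper-semicontinuity argument that again rests on the subadditivity lemma. Zorn produces a minimal element $(F_1^*,\dots,F_n^*)$; if some $F_i^*$ contained two distinct points, splitting $F_i^*=F_i^{*,1}\cup F_i^{*,2}$ into proper closed pieces and applying the subadditivity lemma to the two replacement covers would preserve positive entropy in at least one half, contradicting minimality. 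Hence each $F_i^*=\{x_i\}$, giving the required $n$-SET with $x_i\in U_i^c$.

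The remaining parts are softer. In (2), $T$-invariance follows from translating neighborhoods and reindexing $A\mapsto At$; for closedness, if $x^{(k)}\to x$ lies off the diagonal, pick disjoint closed neighborhoods $V_i$ of $x_i$. Eventually $x^{(k)}_i\in\operatorname{int}V_i$, so the same $V_i$'s witness the SET property of $x^{(k)}$ and pass to the limit $x$. Part (4) uses that a minimal subcover of $\{V_i^c\}$ on $X$ restricts to a cover of $W$, which yields $h_A^X\ge h_A^W$. For (3), the inclusion $\pi^{(n)}(SE_n(X,T))\subseteq SE_n(Y,T)$ follows by pulling back disjoint neighborhoods and observing that, since $\pi$ is surjective, a cover of $Y$ and its pullback have the same minimal subcover cardinality. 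The reverse inclusion is the key step: given $(y_1,\dots,y_n)\in SE_n(Y,T)$ with a witnessing family $\{V_i^c\}$ of positive $A$-entropy, the pulled-back cover $\{\pi^{-1}(V_i)^c\}$ retains positive $A$-entropy, so part (1) produces an SET in $X$ with $i$-th coordinate in $\pi^{-1}(V_i)$; shrinking $V_i$ down to $\{y_i\}$ and using compactness together with the closedness from (2) yields a limit $(x_1,\dots,x_n)\in SE_n(X,T)$ with $\pi(x_i)=y_i$.

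The main technical obstacle throughout is that sequence entropy is defined with $\limsup$ rather than $\lim$, which blocks naive additive manipulations and potentially forces one to refine the witnessing sequence at every step of a Zorn chain. The clean remedy is to fix $A$ at the outset of each argument and verify all the relevant inequalities at the finite level $n$ before taking $\limsup$; this is why I would prove the join-subadditivity lemma first. Once that is in hand, the arguments become formally parallel to Blanchard's entropy-pair proofs in \cite{B2}.
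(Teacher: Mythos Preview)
The paper itself gives no self-contained proof here; it simply records that the argument is analogous to Blanchard's Propositions~2--5 in \cite{B2}. Your plan for parts (2), (3), (4) and the preliminary subadditivity lemma
\[
h_A(T,\mathcal U\vee\mathcal V)\le h_A(T,\mathcal U)+h_A(T,\mathcal V)
\]
is exactly what that analogy amounts to, and those parts are fine.

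There is, however, a genuine gap in your Zorn argument for (1). If $F_i^{\alpha}\downarrow F_i$ along a chain, then $(F_i^{\alpha})^c\subseteq F_i^c$, so the cover $\{(F_i^{\alpha})^c\}$ \emph{refines} $\{F_i^c\}$, which gives
\[
h_A\bigl(T,\{F_1^c,\ldots,F_n^c\}\bigr)\ \le\ h_A\bigl(T,\{(F_1^{\alpha})^c,\ldots,(F_n^{\alpha})^c\}\bigr)
\]
for every $\alpha$. The inequality runs the wrong way: the limit entropy is a \emph{lower} bound for the chain entropies, and nothing prevents it from being zero (indeed, once the $F_i$ are singletons the cover $\{X\setminus\{x_i\}\}$ typically has $h_A=0$). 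So the intersection need not lie in $\mathcal F$, and Zorn does not apply. No ``upper-semicontinuity via subadditivity'' repairs this, because subadditivity only bounds joins from above, not limits from below.

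The fix is minor and stays within your framework. Replace Zorn by a direct sequential refinement: at stage $k$, use your splitting-plus-subadditivity step finitely many times so that each $F_i^{(k)}$ has diameter $<1/k$ while $h_A(T,\{(F_i^{(k)})^c\})>0$. Let $x_i$ be the unique point of $\bigcap_k F_i^{(k)}$. Now for \emph{any} closed neighbourhoods $V_i\ni x_i$ there is $k$ with $F_i^{(k)}\subseteq V_i$, hence $\{V_i^c\}$ refines $\{(F_i^{(k)})^c\}$ and
\[
h_A\bigl(T,\{V_1^c,\ldots,V_n^c\}\bigr)\ \ge\ h_A\bigl(T,\{(F_1^{(k)})^c,\ldots,(F_n^{(k)})^c\}\bigr)>0,
\]
so $(x_1,\ldots,x_n)$ is an $n$-SET. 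The point is that you never need the ``limit cover'' itself to have positive entropy---only that every genuine neighbourhood cover eventually dominates some stage of the construction. Blanchard's own route in \cite{B2} is the contrapositive compactness argument on $\prod_i U_i^c$; either approach works once this issue is handled.
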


By Proposition \ref{prop:basicseq}, a system $(X,T)$
is null if and only if $SE(X,T)=\emptyset$.

\medskip



One may use independence to characterize sequence entropy tuples.

\begin{de}\cite{KL}
Let $(X,T)$ be a t.d.s. and $n\geq 2$.
We call a tuple $x=(x_1,\ldots,x_n)\in X^n$ an {\em IN-tuple} (or an {\em IN-pair} if $n = 2$)
if for any product neighbourhood $U_1\times U_2\times \ldots \times U_n$ of $x$
the tuple $(U_1,U_2,\ldots, U_n)$ has arbitrarily large finite independence sets. We denote the
set of IN-tuples of length $n$ by ${\rm IN}_n (X, T)$.
\end{de}

Note that for ${\rm IN}_n(X,T)$, we have the similar properties listed in Proposition \ref{factor and IT tuple}. The following result explain the relations between $IN$-tuples and sequence entropy tuples.



\begin{thm}\cite[Theorem 5.9]{KL}
Let $(X,T)$ be a t.d.s. and let $n\in \N$ with $n\ge 2$. Then $$SE_n(X,T)\cup \Delta_n(X)={\rm IN}_n(X,T).$$
\end{thm}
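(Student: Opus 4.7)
My plan is to prove the equality $SE_n(X,T)\cup\Delta_n(X)={\rm IN}_n(X,T)$ by establishing the two inclusions separately. In both directions the bridge is a quantitative dictionary between the growth rate of the joined covers $N\bigl(\bigvee_{i=1}^k t_i^{-1}\mathcal{U}\bigr)$ for $\mathcal{U}=\{V_1^c,\ldots,V_n^c\}$, where $V_1,\ldots,V_n$ are pairwise disjoint closed neighborhoods of the $x_i$, and the combinatorial independence of $(V_1,\ldots,V_n)$.

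For the inclusion ${\rm IN}_n(X,T)\setminus\Delta_n(X)\subseteq SE_n(X,T)$, I would use a direct counting argument. Given an IN-tuple $(x_1,\ldots,x_n)$ off the diagonal and disjoint closed neighborhoods $V_i$, applying the IN hypothesis to the product neighborhood of interiors produces finite independence sets $J_1,J_2,\ldots\subset T$ for $(V_1,\ldots,V_n)$ with $|J_k|\to\infty$. For any such $J$, each assignment $\tau\colon J\to\{1,\ldots,n\}$ yields a point $x_\tau$ with $sx_\tau\in V_{\tau(s)}$ for every $s\in J$; because the $V_i$ are disjoint, the cell $\bigcap_{s\in J}s^{-1}V_{a_s}^c$ contains $x_\tau$ only when $a_s\neq\tau(s)$ for every $s$, so it covers at most $(n-1)^{|J|}$ of the $n^{|J|}$ patterns. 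Hence $N\bigl(\bigvee_{s\in J}s^{-1}\mathcal{U}\bigr)\geq\bigl(n/(n-1)\bigr)^{|J|}$. I would then enumerate the $J_k$ into a single sequence $A\subset T$ arranged so that $|J_{k+1}|\geq 2(|J_1|+\cdots+|J_k|)$, which forces the ratio $|J_k|/|J_1\cup\cdots\cup J_k|$ to be bounded below by $2/3$ and yields $h_A(T,\mathcal{U})\geq\tfrac{2}{3}\log\bigl(n/(n-1)\bigr)>0$, so $(x_1,\ldots,x_n)\in SE_n(X,T)$.

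For the converse $SE_n(X,T)\subseteq{\rm IN}_n(X,T)$, take $(x_1,\ldots,x_n)\in SE_n(X,T)$ and an arbitrary product open neighborhood $U_1\times\cdots\times U_n$, and shrink to disjoint closed $V_i\subseteq U_i$. The SET definition provides an infinite sequence $A=\{t_1,t_2,\ldots\}$ with $h_A(T,\mathcal{U})\geq c>0$, so $N_k:=N\bigl(\bigvee_{i=1}^k t_i^{-1}\mathcal{U}\bigr)\geq e^{ck/2}$ along some sequence $k\to\infty$. Encoding each $x\in X$ by $\phi_k(x)\in\{0,1,\ldots,n\}^k$ with $\phi_k(x)_i=j$ when $t_ix\in V_j$ and $\phi_k(x)_i=0$ otherwise, the combinatorics of the cover is governed by $P_k:=\phi_k(X)$: a family $\{a^{(1)},\ldots,a^{(M)}\}\subset\{1,\ldots,n\}^k$ is a subcover iff every $p\in P_k$ disagrees with some $a^{(j)}$ at every nonzero coordinate. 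I would then invoke an $n$-ary Sauer--Shelah-type lemma of Karpovsky--Milman flavor: the exponential lower bound on $N_k$ forces $P_k$ to shatter some $I\subseteq\{1,\ldots,k\}$ of linear size, i.e.\ $P_k|_I\supseteq\{1,\ldots,n\}^I$. The corresponding subset $\{t_i:i\in I\}\subset T$ is then a finite independence set for $(V_1,\ldots,V_n)$ and hence for $(U_1,\ldots,U_n)$; since $|I|\to\infty$ as $k\to\infty$, the tuple is an IN-tuple.

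The principal obstacle is the combinatorial lemma in the second direction, namely the quantitative passage from exponential cover growth to a linearly-sized shattered coordinate set. This is the technical heart of the Kerr--Li independence theory and rests on an $n$-ary Sauer--Shelah bound of Karpovsky--Milman type, rather than on the elementary counting that suffices in the first direction, where everything follows from the disjointness of the $V_i$ and a diagonal arrangement of the sequence $A$.
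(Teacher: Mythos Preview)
The paper does not give its own proof of this theorem; it simply imports the result from Kerr--Li \cite[Theorem 5.9]{KL}. Your outline is essentially the Kerr--Li strategy, and both directions are set up correctly: the inclusion ${\rm IN}_n\setminus\Delta_n\subseteq SE_n$ is indeed the elementary counting you describe (your bound $N\bigl(\bigvee_{s\in J}s^{-1}\mathcal U\bigr)\ge (n/(n-1))^{|J|}$ and the diagonal arrangement of the $J_k$ are exactly what is needed), while the converse $SE_n\subseteq{\rm IN}_n$ hinges on a Sauer--Shelah/Karpovsky--Milman type combinatorial lemma, which you rightly flag as the technical core.

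One point worth sharpening in your second direction: the lemma you need is that exponential growth of the \emph{covering number} $N_k$ forces a linear-size $\{1,\ldots,n\}$-shattered set inside $\{t_1,\ldots,t_k\}$. This is \emph{not} the same as the naive statement ``large $|P_k|$ forces $n$-ary shattering'', which is false in general (e.g.\ $P_k=\{0,2,\ldots,n\}^k$ has $|P_k|=n^k$ but shatters nothing in $\{1,\ldots,n\}$; note however that in that example $N_k=1$, since $a=(1,\ldots,1)$ covers all patterns). The correct lemma---proved in \cite{KL} via a careful induction rather than a direct appeal to Karpovsky--Milman---is formulated in terms of $N_k$ itself, and your statement ``exponential $N_k$ forces $P_k|_I\supseteq\{1,\ldots,n\}^I$'' is the right one. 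With that understood, your sketch is faithful to the Kerr--Li argument.
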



The following lemma is proved in \cite{HY}.

\begin{lem}\label{exist of se}
Let $(X,T)$ be a t.d.s., and let $n\in \N$ with $n\ge 2$. Then $h_\infty(X,T)\ge \log n$ if and only if $SE_n(X,T)\setminus \D^{(n)}(X)\neq \emptyset$.
\end{lem}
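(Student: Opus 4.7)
The plan is to treat the lemma via the Kerr--Li identification $SE_n(X,T)\cup\D_n(X)={\rm IN}_n(X,T)$, turning the statement into the equivalence ``$h_\infty(X,T)\ge\log n$ iff ${\rm IN}_n(X,T)\setminus\D^{(n)}(X)\neq\emptyset$''. In this reformulation the lemma becomes a quantitative-versus-qualitative correspondence: exponential growth of sequence entropy of order $\log n$ matches the combinatorial existence of $n$ genuinely distinct points admitting arbitrarily large finite independence sets.

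For the easier direction ``$\Leftarrow$'', I would pick $(x_1,\ldots,x_n)\in SE_n(X,T)\setminus\D^{(n)}(X)$, choose pairwise disjoint closed neighborhoods $V_1,\ldots,V_n$ of $x_1,\ldots,x_n$, and note by Kerr--Li that the tuple $(\intt V_1,\ldots,\intt V_n)$ admits a finite independence set $J_k\subset T$ for every $k\ge 1$. Concatenate these blocks into an infinite sequence $A=(s_i)_{i\ge 1}$ via $J_{k_1}\cdot J_{k_2}\cdots$ with sizes $k_j$ growing rapidly enough (for instance $k_{j+1}\ge j\cdot(k_1+\cdots+k_j)$) that $k_j/(k_1+\cdots+k_j)\to 1$. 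Consider the open cover $\mathcal{U}'=\{W_1,\ldots,W_n\}$ with $W_i=X\setminus\bigcup_{j\ne i}V_j$, which does cover $X$ because the $V_j$'s are pairwise disjoint. The counting step: for each $\sigma:J_{k_j}\to\{1,\ldots,n\}$ a point in $\bigcap_{s\in J_{k_j}}s^{-1}\intt V_{\sigma(s)}$ belongs to the element $\bigcap_{s\in J_{k_j}}s^{-1}W_{\tau(s)}$ of $\bigvee_{s\in J_{k_j}}s^{-1}\mathcal{U}'$ only when $\tau=\sigma$, so all $n^{k_j}$ such elements are essential in any subcover. Therefore $N(\bigvee_{i=1}^{n_j}s_i^{-1}\mathcal{U}')\ge n^{k_j}$ with $n_j=k_1+\cdots+k_j$, so $h_A(T,\mathcal{U}')\ge\limsup_j (k_j/n_j)\log n=\log n$ and hence $h_\infty(X,T)\ge\log n$.

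For ``$\Rightarrow$'', I would select a sequence $A$ and a finite open cover $\mathcal{U}$ with $h_A(T,\mathcal{U})>\log(n-1)$ and then refine $\mathcal{U}$ to an admissible cover of arbitrarily small mesh while preserving the sequence entropy lower bound. Proposition \ref{prop:basicseq}(1) then produces an SET-tuple $(y_1,\ldots,y_m)$ with $y_i$ in the complement of the $i$-th cover element and $m=|\mathcal{U}|\ge n$. By Kerr--Li this tuple lies in ${\rm IN}_m(X,T)$, and since IN-tuples pass to sub-tuples the remaining task is to single out $n$ pairwise distinct coordinates. For this I would apply a Sauer--Shelah style extraction to the finite independence sets witnessing the IN-property, combined with the small-mesh refinement (which confines each $y_i$ to a region of diameter less than the prescribed $\varepsilon$), selecting $n$ indices whose corresponding $y$'s are geometrically distinct and still carry enough independence to yield an element of $SE_n(X,T)\setminus\D^{(n)}(X)$.

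The main obstacle I expect is exactly this distinctness-extraction step in the ``$\Rightarrow$'' direction. Proposition \ref{prop:basicseq}(1) alone yields an SET-tuple whose length matches the cover cardinality but places no constraint separating its coordinates; the admissibility condition guarantees only that each $y_i$ avoids the closure of its own cover element, which does not preclude coincidences. Forcing a pairwise distinct sub-tuple while preserving positive-density independence is where the Sauer--Shelah combinatorial input from \cite{KL} becomes indispensable, and I expect most of the technical effort to concentrate there.
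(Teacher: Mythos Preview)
The paper does not supply its own proof of this lemma; it merely records the citation to \cite{HY}. So there is nothing in the paper to compare your argument against beyond the fact that the authors regard the result as established in the literature.

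Your ``$\Leftarrow$'' direction is correct and standard: the cover $\mathcal U'=\{W_i\}$ with $W_i=X\setminus\bigcup_{j\ne i}V_j$ is the usual device, and block-concatenating independence sets of superlinearly growing sizes produces a sequence $A$ along which $h_A(T,\mathcal U')\ge\log n$.

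Your ``$\Rightarrow$'' plan has the right ingredients but applies them in the wrong order, and this is the source of the obstacle you identify. Invoking Proposition~\ref{prop:basicseq}(1) discards the quantitative hypothesis: that proposition uses only $h_A(T,\mathcal U)>0$ and returns an $m$-SET $(y_1,\ldots,y_m)$ whose coordinates could collapse to as few as two distinct points. No post-hoc combinatorics on those $m$ points will then recover $n$ distinct ones carrying independence. In the argument of \cite{HY} (and in the parallel development in \cite{KL}) the Sauer--Shelah / Karpovsky--Milman step is applied \emph{at the cover level, before passing to points}: from $h_A(T,\mathcal U)>\log(n-1)$ with $\mathcal U=\{U_1,\ldots,U_m\}$ of small mesh, the combinatorial lemma is run on the set of realized patterns in $\{1,\ldots,m\}^k$ to extract $n$ indices $i_1,\ldots,i_n$ and arbitrarily large $J\subset A$ that are already independence sets for the closed disjoint tuple $(U_{i_1}^c,\ldots,U_{i_n}^c)$. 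The small mesh forces these $n$ closed sets to be pairwise disjoint, and an IN-tuple with $n$ distinct coordinates is then obtained directly (as in Proposition~\ref{factor and IT tuple}(1) for IN). In short, the Sauer--Shelah input is what converts the entropy bound $>\log(n-1)$ into $n$ pairwise disjoint closed sets with arbitrarily large independence sets; it is not a repair mechanism for a degenerate SET-tuple.
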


We will also use the following result in the sequel.
\begin{thm}\cite[Theorems 3.8, 3.9]{MS}\label{MS-lemma}
Let $(X,T)$ be a minimal system with $T$ abelian and $\pi: X\rightarrow X_{eq}$ be the extension of its maximal
equicontinuous factor. Let $x_1,x_2,\ldots, x_n\in X$ such that $\pi(x_1)=\ldots=\pi(x_n)$.
If $(x_1,\ldots,x_n)$ is minimal, then $(x_1,\ldots,x_n)\in SE_n(X,T)\cup \Delta_n(X)$.
\end{thm}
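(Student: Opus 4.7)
My plan is to establish the equivalent conclusion $(x_1,\ldots,x_n)\in {\rm IN}_n(X,T)$, using the identity $SE_n(X,T)\cup\Delta_n(X)={\rm IN}_n(X,T)$ of Kerr--Li stated just above. If two coordinates coincide, the tuple lies in $\Delta_n(X)$, so I assume $x_1,\ldots,x_n$ are pairwise distinct and set $y_0:=\pi(x_1)=\cdots=\pi(x_n)$. Given arbitrary neighborhoods $U_i\ni x_i$, the goal is to produce, for every $k\in\N$, a finite independence set $F\subseteq T$ of size at least $k$ for $(U_1,\ldots,U_n)$.

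Three structural inputs drive the construction. First, minimality of $(x_1,\ldots,x_n)$ in $(X^n,T)$ makes the return set $R:=\{t\in T:tx_i\in U_i\text{ for all }i\}$ syndetic. Second, by Theorem~\ref{Thm-Ellis} applied to the abelian $T$, $X_{eq}$ is isomorphic as a flow to the compact abelian Ellis group $E(X_{eq},T)$, so for every $\ep>0$ the $\ep$-return set $R_\ep:=\{t\in T:d(ty_0,y_0)<\ep\}$ is a Bohr set, hence syndetic, and equicontinuity provides a uniform modulus controlling orbits of nearby points of $X_{eq}$. Third, $T$-equivariance gives $\pi(tx_i)=ty_0$ for every $t$ and $i$, so the orbit closure $M:=\overline{T(x_1,\ldots,x_n)}\subseteq X^n$ sits inside the fibered product $X\times_\pi\cdots\times_\pi X$, and $\pi^{(n)}(M)$ equals the diagonal of $X_{eq}^n$; in particular every pair $(x_i,x_j)$ is regionally proximal.

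I would then construct $F$ inductively. Suppose an independence set $F_{k-1}$ of size $k-1$ has been built; then for every pattern $\tau\in\{1,\ldots,n\}^{F_{k-1}}$ the set $Z_\tau:=\bigcap_{s\in F_{k-1}}s^{-1}U_{\tau(s)}$ is nonempty, and I must find a single $t\in T$ with $Z_\tau\cap t^{-1}U_i\neq\emptyset$ simultaneously for every $\tau$ and every $i\in\{1,\ldots,n\}$. The plan is to pick $t\in R\cap R_\ep$ for $\ep$ small relative to the equicontinuity modulus of $X_{eq}$; the intersection $R\cap R_\ep$ is syndetic because both factors are syndetic in the amenable abelian group $T$, and equicontinuity then ensures that any $z\in\pi^{-1}(V)$ with $V$ a small neighborhood of $y_0$ is sent by $t$ to a fiber over a point still close to $y_0$. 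The crux of the argument---where all three hypotheses (minimality of the tuple, abelianness of $T$, and $\pi$ being the factor to the equicontinuous system) are genuinely indispensable---is showing that every $Z_\tau$ is ``$\pi$-fiber rich'' near $y_0$, i.e.\ contains representatives in $\pi^{-1}(y)$ for every $y$ in some neighborhood of $y_0$. I would try to establish this by iteratively exploiting the regional proximality of the pairs $(x_i,x_j)$ together with the minimality of $M$ to populate each $Z_\tau$ with points whose $\pi$-images fill out a neighborhood of $y_0$; once this fiber-richness is in hand, a single $t\in R\cap R_\ep$ can route the appropriate representative of $Z_\tau$ into each prescribed $U_i$, completing the inductive step and yielding independence sets of arbitrary finite size.
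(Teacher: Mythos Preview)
This theorem is not proved in the present paper; it is quoted from Maass--Shao \cite{MS}, so there is no in-paper argument to compare against. Assessing your proposal on its own, there are two genuine gaps.

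First, the assertion ``$R\cap R_\ep$ is syndetic because both factors are syndetic in the amenable abelian group $T$'' is false as a general principle: even in $\Z$, two syndetic sets may have empty intersection (the evens and the odds). This particular step is salvageable---after shrinking the $U_i$, the condition $tx_i\in U_i$ forces $ty_0=\pi(tx_i)\in\pi(U_i)$ to lie in any prescribed neighborhood of $y_0$, so in fact $R\subseteq R_\ep$---but the justification you wrote does not work.

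Second, and decisively, the step you yourself flag as the crux---that each partial-pattern set $Z_\tau=\bigcap_{s\in F_{k-1}}s^{-1}U_{\tau(s)}$ is ``$\pi$-fiber rich near $y_0$''---is not proved; you only say you ``would try'' to extract it from regional proximality of the pairs $(x_i,x_j)$ and minimality of $M$. Regional proximality gives, for each individual pair, nearby points whose orbits come close at \emph{some} time, but it does not by itself manufacture a \emph{single} time $t$ that simultaneously routes a witness from every one of the $n^{|F_{k-1}|}$ sets $Z_\tau$ into each prescribed $U_i$. That simultaneous control is the entire content of the theorem, and your outline never supplies a mechanism for it. The argument in \cite{MS} (and the earlier $n=2$ case in \cite{HLSY}) makes direct use of the compact-abelian-group structure $X_{eq}\cong E(X_{eq},T)$ from Theorem~\ref{Thm-Ellis} to build an explicit sequence along which the cover $\{V_1^c,\ldots,V_n^c\}$ has positive sequence entropy; without engaging that mechanism or an equivalent one, your inductive step remains a hope rather than a proof.
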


In fact, the same proof yields that $(x_1,\ldots,x_n)\in IT_n(X,T)$.

\subsection{Some facts about hyperspaces} \label{sub:ellis} \
\medskip

Let $X$ be a compact Hausdorff topological space. Let $2^X$ be the collection of nonempty closed subsets of $X$
endowed with the Hausdorff topology. A basis for this
topology on $2^X$ is given by
$$\langle U_1,\ldots,U_n \rangle=\{A\in 2^X: A\subseteq \bigcup_{i=1}^n U_i\
\text{and $A\cap U_i\neq \emptyset$ for every $i\in \{
1,\ldots,n\}$}\},$$ where each $U_i \subseteq X$ is open.
When $X$ is a metric space, then $2^X$ is also a metric space. Let $d$ be the metric on $X$,
then one may define a metric on $2^X$ as follows:
\begin{equation*}
\begin{split}
 d_H(A,B)& = \inf \{\ep>0: A\subset B_\ep[B], B\subset B_\ep[A]\}\\
 &= \max \{\max_{a\in A} d(a,B),\max_{b\in B} d(b,A)\},
\end{split}
\end{equation*}
where $d(x,A)=\inf_{y\in A} d(x,y)$ and $B_\ep [A]=\{x\in X: d(x, A)<\ep\}$.
The metric $d_H$ is called the {\em Hausdorff metric} of $2^X$.

Let $\{A_i\}_{i=1}^\infty$ be a sequence of subsets of $X$. Define
$$\liminf A_i=\{x\in X: \text{for any neighbourhood $U$ of $x$, $U\cap A_i\neq \emptyset$ for all but finitely many $i$}\};$$
$$\limsup A_i=\{x\in X: \text{for any neighbourhood $U$ of $x$, $U\cap A_i\neq \emptyset$ for infinitely many $i$}\};$$
We say that $\{A_i\}_{i=1}^\infty$ converges to $A$, denoted by $\lim_{i\to \infty} A_i=A$, if
$$\liminf A_i=\limsup A_i=A.$$

Let $X,Y$ be two compact Hausdorff topological spaces. Let $F: Y\rightarrow 2^X$ be a map and $y\in Y$.
 We say that $F$ is {\em upper semi-continuous (u.s.c.)} at $y$ if for any open set $U$ of $X$
 such that $F(y)\subset U$, then $\{y'\in Y: F(y')\subset U\}$ is a neighbourhood of $y$.
 If $F$ is u.s.c. at every point of $Y$, then we say that $F$ is u.s.c. It is easy to see
 that $F$ is u.s.c. at $y$ if and only if whenever $\lim y_i=y$, one has that
 $\limsup F(y_i)\subset F(y)$. If $f: X\rightarrow Y$ is a continuous map, then it is
 easy to verify that $$F=f^{-1}: Y\rightarrow 2^X, y\mapsto f^{-1}(y)$$ is u.s.c.

We say $F$ is {\em lower semi-continuous (l.s.c.)} at $y$ if for any open set $U$ of $X$
such that $F(y)\cap U\neq \emptyset$, then $\{y'\in Y: F(y')\cap U\neq \emptyset\}$ is a
neighbourhood of $y$. If $F$ is l.s.c. at every point of $Y$, then we say that $F$ is l.s.c.
It is easy to see that $F$ is l.s.c. at $y$ if and only if whenever $\lim y_i=y$, one has that $\liminf F(y_i)\supset F(y)$.

We have the following well known result, for a proof see \cite[p.70-71]{Kura2} and \cite[p.394]{Kura1}.
\begin{thm}\label{Kura}
Let $X,Y$ be compact metric spaces. If $F: Y\rightarrow 2^X$ is u.s.c. (or l.s.c.),
then the points of continuity of $F$ form a dense $G_\delta$ set in $Y$.
\end{thm}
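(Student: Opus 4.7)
I would prove this classical Kuratowski theorem by reducing the continuity condition to a countable collection of "boundary" conditions and then applying the Baire category theorem to $Y$, which is a compact metric (hence Baire) space.

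First I would fix notation: for a subset $S \subset X$ set
$F^{-1}(S) = \{y\in Y : F(y)\cap S\neq\emptyset\}$ and
$F^{+1}(S) = \{y\in Y : F(y)\subset S\}$.
Upper semi-continuity of $F$ is equivalent to $F^{+1}(U)$ being open for every open $U\subset X$, or equivalently $F^{-1}(C)$ being closed for every closed $C\subset X$. Since $F$ is continuous at $y$ iff it is both u.s.c. and l.s.c. there, in the u.s.c. case the continuity points are precisely the points where $F$ is l.s.c.

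Fix a countable base $\{U_n\}_{n\in\N}$ for the topology of $X$. The key observation is that $F$ is l.s.c. at $y_0$ iff for every $n$ with $y_0\in F^{-1}(U_n)$ one has $y_0\in \intt\bigl(F^{-1}(U_n)\bigr)$. Hence
\[
\{y : F \text{ is l.s.c. at } y\} = \bigcap_{n\in\N}\Bigl[\bigl(Y\setminus F^{-1}(U_n)\bigr)\cup \intt\bigl(F^{-1}(U_n)\bigr)\Bigr].
\]
Since $F$ is u.s.c., $F^{-1}(C)$ is closed for every closed $C$; as open sets in the metric space $X$ are $F_\sigma$, say $U_n=\bigcup_k C_{n,k}$, we get $F^{-1}(U_n)=\bigcup_k F^{-1}(C_{n,k})$, an $F_\sigma$ set. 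Therefore $Y\setminus F^{-1}(U_n)$ is $G_\delta$, so each bracket is $G_\delta$, and the set of l.s.c. points is $G_\delta$.

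For density, it suffices to show the complement, namely $\bigcup_n \bigl(F^{-1}(U_n)\setminus \intt(F^{-1}(U_n))\bigr)$, is meager. If $y$ lies in this complement for some $n$, then $y\in F^{-1}(C_{n,k})$ for some $k$; if additionally $y\in \intt F^{-1}(C_{n,k})$ then $y\in \intt F^{-1}(U_n)$, a contradiction. Hence
\[
F^{-1}(U_n)\setminus \intt\bigl(F^{-1}(U_n)\bigr)\subset \bigcup_k \Bigl(F^{-1}(C_{n,k})\setminus \intt F^{-1}(C_{n,k})\Bigr),
\]
and each term on the right is the topological boundary of a closed set, hence nowhere dense. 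Thus the complement is a countable union of nowhere dense sets; by the Baire category theorem applied to the compact metric space $Y$, the set of l.s.c. points is dense $G_\delta$, proving the u.s.c. case.

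For the l.s.c. case the roles are reversed: $F^{+1}(U)$ is now the set whose behavior one controls, and one must check u.s.c. at $y_0$ against the countable family $\mathcal V$ of finite unions of basic open sets $U_{n_1}\cup\cdots\cup U_{n_k}$ — compactness of $F(y_0)$ inside any open neighborhood $U$ yields a member of $\mathcal V$ sandwiched between $F(y_0)$ and $U$, which is the only mildly delicate point of the argument. With this substitution the same $F_\sigma$-vs-$G_\delta$ bookkeeping and Baire argument go through verbatim, handling this technicality of reducing continuity on $2^X$ to countably many conditions is the main (small) obstacle of the proof.
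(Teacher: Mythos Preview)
Your argument is correct. The paper itself does not prove this theorem; it simply records it as a well known result and refers to Kuratowski's \emph{Topology}, Vol.~I (p.~394) and Vol.~II (pp.~70--71) for a proof. What you have written is essentially the standard Fort--Kuratowski argument, so there is nothing to compare beyond noting that you supply a self-contained proof where the paper gives only a citation.

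One expositional point worth tightening: when you conclude ``so each bracket is $G_\delta$'', the reason is not that a union of two $G_\delta$ sets is $G_\delta$ (which is false in general), but rather that
\[
\bigl(Y\setminus F^{-1}(U_n)\bigr)\cup \intt\bigl(F^{-1}(U_n)\bigr)
= Y\setminus\Bigl(F^{-1}(U_n)\setminus \intt F^{-1}(U_n)\Bigr),
\]
and $F^{-1}(U_n)\setminus \intt F^{-1}(U_n)$ is the intersection of an $F_\sigma$ set with a closed set, hence $F_\sigma$; its complement is therefore $G_\delta$. Your subsequent meagerness argument via the boundaries of the closed sets $F^{-1}(C_{n,k})$ is clean and correct, and your remark that in the l.s.c.\ case one must pass to the countable family of finite unions of basic open sets (to exploit compactness of $F(y_0)$) is exactly the right adjustment.
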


\medskip

Let $(X,T)$ be a t.d.s. We can induce a system on $2^X$. The
action of $T$ on $2^X$ is given by $tA=\{ta:a\in A\}$ for each $t\in T$ and $A \in 2^X$. Then
$(2^X,T)$ is a t.d.s. and it is called the {\em hypersapce system}.

\subsection{Fundamental extensions}\
\medskip

Let $(X,T)$ be a t.d.s. Fix $(x,y)\in X^2$. It is a {\it proximal}
pair if $\inf_{t\in T} d(tx, ty)=0$; it is a {\it
distal} pair if it is not proximal. Denote by $P(X,T)$ and
$D(X,T)$ the sets of proximal and distal pairs of $(X,T)$
respectively. They are also called the proximal and distal
relations. A t.d.s. $(X,T)$ is {\it distal} if $D(X,T)=X^2 \setminus
\D(X)$. Any equicontinuous system is distal.

\medskip

Let $(X,T)$ and $(Y,S)$ be t.d.s. and let $\pi: X \to Y$ be a factor map.
One says that:
\begin{enumerate}
  \item $\pi$ is an {\it open} extension if it is open as a map;
  \item $\pi$ is a {\it semi-open} extension if the image of every nonempty open set of $X$ has nonempty interior;
  \item $\pi$ is a {\it proximal} extension if
$\pi(x_1)=\pi(x_2)$ implies $(x_1,x_2) \in P(X,T)$;
  \item $\pi$ is  a {\it distal} extension if $\pi(x_1)=\pi(x_2)$ and $x_1\neq x_2$ implies $(x_1,x_2) \in D(X,T)$;
\item $\pi$ is an {\it equicontinuous or isometric} extension if for any $\ep >0$ there exists $\d>0$
such that $\pi(x_1)=\pi(x_2)$ and $d(x_1,x_2)<\d$ imply $d(tx_1,tx_2)<\ep$ for any $t\in T$;
\item $\pi$ is a {\it group} extension if there exists a
compact Hausdorff topological group $K$ such that the following
conditions hold:
\begin{enumerate}
\item $K$ acts continuously on $X$ from the right: the
right action $X\times K\rightarrow X$, $(x,k)\mapsto xk$ is
continuous and $t(xk)=(tx)k$ for any $t\in T$ and $k\in K$;
\item the fibers of $\pi$ are the $K$-orbits in $X$:
$\pi^{-1}(\{\pi(x)\})=xK$ for any $x\in X$.
\end{enumerate}
\end{enumerate}

Note that a group extension is equicontinuous, and an equicontinous extension is distal.



\subsection{Almost finite to one extensions}\label{subsec-finite-one}\
\medskip

In this subsection we collect some known properties about finite to one extensions and almost finite to one extensions.


Let $\pi:(X,T)\rightarrow (Y,T)$ be an extension of t.d.s. Let $\pi^{-1}: Y\rightarrow 2^X, y\mapsto \pi^{-1}(y)$.
Then it is easy to verify that $\pi^{-1}$ is a u.s.c. map, and by Theorem \ref{Kura}, the set $Y_c$ of
continuous points of $\pi^{-1}$ is a dense $G_\d$ subset of $Y$.
Let $$\widetilde{Y}=\overline{\{\pi^{-1}(y): y\in Y\}}\ \text{and}\ Y'=\overline{\{\pi^{-1}(y): y\in Y_c\}},$$
where the closure is taken in $2^X$. It is obvious that $Y'\subseteq \widetilde{Y}\subseteq 2^X$. Note that
for each $A\in \widetilde{Y}$, there is some $y\in Y$ such that $A\subseteq \pi^{-1}(y)$, and hence $A\mapsto y$
define a map $\tau: \widetilde{Y}\rightarrow Y$. It is easy to verify that $\tau: (\widetilde{Y},T)\rightarrow (Y,T)$
is a factor map. Now we show that if $(Y,T)$ is minimal then $(Y',T)$ is a minimal t.d.s. and it is the
unique minimal subsystem in $(\widetilde{Y},T)$. To see this, let $(M,T)$ be a minimal subsystem of $(\widetilde{Y},T)$.
Since $(Y,T)$ is minimal, $\tau: M\rightarrow Y$ is surjective. Let $y_0\in Y_c$ and $A\in M$ with $\tau(A)=y_0$.
By the definition of $\tau$, $A\subseteq \pi^{-1}(y_0)$.
Since $A\in M\subseteq \widetilde{Y}$, there is some sequence $\{y_i\}_{i=1}^\infty\subseteq Y$ such that
$\pi^{-1}(y_i)\to A, i\to\infty$. As $A\subseteq \pi^{-1}(y_0)$, it follows that $y_i\to y_0, i\to\infty$.
By the fact $y_0\in Y_c$, we have that
$\pi^{-1}(y_i)\to \pi^{-1}(y_0)$, $i\to\infty$. Thus $A=\pi^{-1}(y_0)$. To sum up, we have showed that for
each $y_0\in Y_c$, $\pi^{-1}(y_0)$ is a minimal point of $(2^X,T)$ and
$$\{\pi^{-1}(y_0): y_0\in Y_c\}\subseteq M.$$
Thus $Y'=\overline{\{\pi^{-1}(y_0): y_0\in Y_c\}}\subseteq M$. Since $M$ is minimal, $Y'=M$. That is, $(Y',T)$
is the unique minimal subsystem in $(\widetilde{Y},T)$ and $\tau: Y'\rightarrow Y$ is an almost one to one
 extension. Note the this result was given by Veech in \cite{V70}, and see also \cite{AuG77, Shoenfeld, V} for generalizations.

Using this result we can give some equivalent conditions for almost finite to one extensions.

\begin{prop}\label{almost N-1}
Let $\pi:(X,T)\rightarrow (Y,T)$ be an extension with $(Y,T)$ being minimal.
The following statements are equivalent:
\begin{enumerate}
  \item $\pi$ is almost finite to one, i.e. some fiber is finite;
  \item There exists $N\in \N$ such that $Y_0=\{y\in Y: |\pi^{-1}(y)|=N\}$ is residual, i.e. it contains a dense $G_\d$ subset of $Y$;
  \item There exist $N\in \N$ and $y_0\in Y$ such that $|\pi^{-1}(y_0)|=N$ and $\pi^{-1}(y_0)$ is a minimal point of $(2^X, T)$.
\end{enumerate}
\end{prop}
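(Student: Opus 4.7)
The plan is to prove the equivalence via the cycle $(2) \Rightarrow (1) \Rightarrow (3) \Rightarrow (2)$. The implication $(2) \Rightarrow (1)$ is immediate, since a residual subset of the Baire space $Y$ is nonempty, so any $y$ in $Y_0$ witnesses a finite fiber. The key technical fact I will rely on throughout is that the cardinality function $A \mapsto |A|$ is lower semi-continuous on $2^X$: if $A_i \to A$ in the Hausdorff topology and $A$ contains $n$ distinct points $x_1, \ldots, x_n$, then choosing pairwise disjoint neighbourhoods $U_j$ of $x_j$ forces $A_i \cap U_j \neq \emptyset$ for all large $i$, so $|A_i| \geq n$ eventually. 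Equivalently, $\{A \in 2^X : |A| \leq k\}$ is closed (and $T$-invariant) for every $k \in \N$.

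For $(1) \Rightarrow (3)$, I would take $y_1 \in Y$ with $|\pi^{-1}(y_1)| = k < \infty$ and consider the orbit closure $M_1 = \overline{T \cdot \pi^{-1}(y_1)}$ inside $2^X$. Since the $T$-orbit of a fibre consists of fibres, $M_1 \subseteq \widetilde{Y}$, and by lower semi-continuity every element of $M_1$ has cardinality at most $k$. Let $M'$ be any minimal subsystem of $M_1$ and set $N = \min\{|A| : A \in M'\}$; then $\{A \in M' : |A| \leq N\}$ is a nonempty closed $T$-invariant subset of $M'$, hence equals $M'$ by minimality, so $|A| = N$ uniformly on $M'$. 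By the uniqueness of the minimal subsystem of $\widetilde{Y}$ established in the preamble, $M' = Y'$. Picking any $y_0 \in Y_c$, the preamble guarantees $\pi^{-1}(y_0) \in Y' = M'$, whence $|\pi^{-1}(y_0)| = N$ and $\pi^{-1}(y_0)$ is a minimal point of $(2^X, T)$.

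For $(3) \Rightarrow (2)$, given $y_0$ with $|\pi^{-1}(y_0)| = N$ and $\pi^{-1}(y_0)$ a minimal point of $(2^X, T)$, its orbit closure is a minimal subsystem of $\widetilde{Y}$, which coincides with $Y'$ by uniqueness. Lower semi-continuity of $|\cdot|$, applied along the orbit of $\pi^{-1}(y_0)$, yields $|A| \leq N$ for every $A \in Y'$. The closed $T$-invariant set $\{A \in Y' : |A| \leq N-1\}$ does not contain $\pi^{-1}(y_0)$, so it must be empty by minimality, giving $|A| = N$ identically on $Y'$. Since every $y \in Y_c$ satisfies $\pi^{-1}(y) \in Y'$, we obtain $Y_c \subseteq Y_0$, and because $Y_c$ is a dense $G_\delta$ by Theorem~\ref{Kura}, so is $Y_0$.

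The main obstacle is really a technical one: carefully establishing lower semi-continuity of cardinality on the hyperspace and then combining it with minimality to force a \emph{uniform} cardinality on $Y'$. Once this is in hand, everything else rests on the characterization of $Y'$ as the unique minimal subsystem of $\widetilde{Y}$ and on the fact that continuity points of $\pi^{-1}$ lift to elements of $Y'$, both of which are already developed in the preamble, so no new machinery is required.
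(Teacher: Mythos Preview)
Your proof is correct and relies on the same ingredients as the paper's (the preamble's description of $Y'$ as the unique minimal subsystem of $\widetilde{Y}$, the fact that $\pi^{-1}(y)\in Y'$ for $y\in Y_c$, and lower semi-continuity of $A\mapsto |A|$). The only difference is organizational: the paper runs the cycle $(1)\Rightarrow(2)\Rightarrow(3)\Rightarrow(1)$, proving $(1)\Rightarrow(2)$ directly by pushing a finite fiber to a point of $Y_c$ via minimality of $Y$ and using continuity of $\pi^{-1}$ there, whereas you run $(2)\Rightarrow(1)\Rightarrow(3)\Rightarrow(2)$ and instead extract the uniform cardinality on $Y'$ from a minimality argument in the hyperspace. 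Both routes are short and essentially equivalent.
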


\begin{proof}
Let $\pi^{-1}: Y\rightarrow 2^X$, $Y_c$ and $Y'$ etc. be defined as above.

(1) $\Rightarrow$ (2):
If $\pi$ is almost finite to one, then the set $Y_f:=\{ y_0\in Y:|\pi^{-1}(y_0)|<\infty\}$ is not empty.
Put $N:=\min_{y_0\in Y_f} |\pi^{-1}(y_0)|$. Then $N\in \mathbb{N}$. Take any $y\in Y_c$ and $y_0\in Y_f$. Since $(Y,T)$ is minimal, there is some sequence $\{t_i\}_{i=1}^\infty\subset T$ such that $\lim\limits_{i} t_iy_0=y$. Note that $|\pi^{-1}(t_iy_0)|=|\pi^{-1}(y_0)|$ for $i\in \mathbb{N}$  and $y\in Y_c$. It follows
that $|\pi^{-1}(y)|\le |\pi^{-1}(y_0)|<\infty$.
This implies that $Y_c\subset Y_f$ and
$$Y_c\subset Y_0=\{y\in Y: |\pi^{-1}(y)|=N\}.$$
Thus we have that $Y_0$ is residual.

(2) $\Rightarrow$ (3): Assume that there exists $N\in \N$ such that $Y_0=\{y\in Y: |\pi^{-1}(y)|=N\}$ is
residual. Note that $Y'=\overline{\{\pi^{-1}(y_0): y_0\in Y_c\}}$ is minimal and for all $y_0\in Y_c$,
$\pi^{-1}(y_0)$ is a minimal point of $(2^X, T)$. Thus we have (3) by taking $y\in Y_0\cap Y_c$.

(3) $\Rightarrow$ (1):  It is obvious.
\end{proof}


\begin{rem}
\begin{enumerate}
  \item Almost finite to one extensions can be defined not only for metric systems but also
  compact Hausdorff systems. We refer \cite{Shoenfeld} for more details, where it was
  called {\em generalized almost finite extension}.
  \item By definition it is obvious that a finite to one extension is almost finite to one.
  But in general, an almost finite to one extension may not be finite to one. For example,
  for Rees' example \cite{R}, $\pi: (X,T)\rightarrow (X_{eq},T)$ is an almost one to one
  extension, and for any $y\in X_{eq}$, either $|\pi^{-1}(y)|=1$ or $|\pi^{-1}(y)|=\infty$.
      \item  There is some example such that $\pi: X \rightarrow Y$ be a finite to one
      extension i.e. $y\in Y$, $|\pi^{-1}(y)|<\infty$, but $\sup_{y\in Y} |\pi^{-1}(y)|=\infty$ (see \cite[Example 5.7.]{YZ08}).
\end{enumerate}
\end{rem}

\begin{lem}\label{lem-f}
Let $A=\{x_1,x_2,\ldots,x_N\}\in 2^X$ and $\{t_i\}_i$ be a net of $T$. Then in $2^X$, we have that
$$\lim_i t_i A=\lim_i t_i \{x_1,\ldots,x_N \}=\{\lim_{i}t_ix_1,\ldots, \lim_{i}t_i x_N\}$$ if all limits exist.
\end{lem}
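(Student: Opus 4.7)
The plan is to verify the equality by establishing both inclusions $\liminf_i t_iA \supseteq B$ and $\limsup_i t_iA \subseteq B$, where $B=\{y_1,\ldots,y_N\}$ with $y_k=\lim_i t_ix_k$. Since $X$ is compact metric, Hausdorff convergence $t_iA\to B$ in $2^X$ is equivalent to $\liminf_i t_iA = \limsup_i t_iA = B$, so these two inclusions suffice.

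For the $\liminf$ inclusion, I would simply observe that each $y_k$ lies in $\liminf_i t_iA$: given any neighbourhood $U$ of $y_k$, the hypothesis $t_ix_k\to y_k$ gives $t_ix_k\in U$ eventually, and since $t_ix_k\in t_iA$, we get $t_iA\cap U\neq\emptyset$ eventually. Hence $B\subseteq\liminf_i t_iA$.

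The substantive part is the $\limsup$ inclusion, where the finiteness of $A$ is used. Let $z\in\limsup_i t_iA$. By definition, one can extract a subnet $(t_{i_\alpha})$ together with points $z_\alpha\in t_{i_\alpha}A$ with $z_\alpha\to z$. Each $z_\alpha=t_{i_\alpha}x_{k_\alpha}$ for some index $k_\alpha\in\{1,\ldots,N\}$. Since the index set $\{1,\ldots,N\}$ is finite, a standard pigeonhole/subnet argument yields a further subnet on which $k_\alpha$ is constant, equal to some $k\in\{1,\ldots,N\}$. Along this subnet $t_{i_\beta}x_k\to z$, but since the whole net $t_ix_k$ converges to $y_k$, every subnet must also converge to $y_k$; uniqueness of limits in the Hausdorff (metric) space $X$ gives $z=y_k\in B$. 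Hence $\limsup_i t_iA\subseteq B$.

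Combining the two inclusions with the trivial $\liminf_i t_iA\subseteq\limsup_i t_iA$, we conclude $\lim_i t_iA=B$, as required. The only delicate point is the pigeonhole step for nets (as opposed to sequences), and the main obstacle, if any, is purely notational: passing from ``for each $\alpha$ some $k_\alpha$'' to ``a cofinal subnet with constant $k$'' via the finite range of the selector map.
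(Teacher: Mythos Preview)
Your proof is correct and follows essentially the same strategy as the paper's: verify the easy inclusion $B\subseteq\lim_i t_iA$ directly, and for the reverse inclusion pick for each index a witness $z_i\in A$ with $t_iz_i$ close to the target point, then use that $A$ is finite to pass to a sub(net) on which the witness is constant. The only cosmetic difference is that you phrase the argument via $\liminf$/$\limsup$ (thereby not assuming a priori that $\lim_i t_iA$ exists in $2^X$), whereas the paper works directly with the assumed Hausdorff limit; the pigeonhole-on-a-finite-set step is identical in both.
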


\begin{proof}
First by definition it is easy to see that $\{\lim_{i}t_ix_1,\ldots, \lim_{i}t_i x_N\}\subseteq \lim_i t_i A$.
Now let $x\in \lim_i t_i A$. By the definition of Hausdoff metric, for each $i$, there is some $z_i\in A$
such that $t_iz_i\to x$. Since $A$ is a finite set, one may assume $z_i$ is constant, i.e. $z_i=z\in A$.
Thus $x=\lim_i t_iz$. Thus $ \lim_i t_i A\subseteq\{\lim_{i}t_ix_1,\ldots, \lim_{i}t_i x_N\}$.
\end{proof}

By Lemma \ref{lem-f}, we have:
\begin{cor}\label{cor-f}
Let $\pi:(X,T)\rightarrow (Y,T)$ be an extension with $(Y,T)$ being minimal. If $\pi$ is an almost $N$
to one extension for some $N\in \N$, then the cardinality of each element of $Y'$ is $N$, where $(Y',T)$
is the minimal system defined at the beginning of this subsection.
\end{cor}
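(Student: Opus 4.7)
The plan is to realize $Y'$ as the orbit closure in $2^X$ of any single fiber $\pi^{-1}(y_0)$ with $y_0\in Y_c$, and then exploit the fact that Hausdorff limits of uniformly bounded finite sets can only lose, not gain, elements. First I would note that by Proposition~\ref{almost N-1} one has the inclusion $Y_c\subseteq\{y\in Y:|\pi^{-1}(y)|=N\}$, so $\pi^{-1}(y_0)$ has cardinality exactly $N$ for every $y_0\in Y_c$. Since $\pi^{-1}(y_0)\in Y'$ and $(Y',T)$ is minimal (as observed at the start of this subsection), the orbit closure $\overline{T\pi^{-1}(y_0)}$ coincides with $Y'$.

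The upper bound $|A|\le N$ for every $A\in Y'$ is now almost immediate. Fix such an $A$; by minimality there is a net $(t_i)$ in $T$ with $A=\lim_i t_i\pi^{-1}(y_0)$. Writing $\pi^{-1}(y_0)=\{x_1,\ldots,x_N\}$ and passing to a subnet along which each $t_ix_j$ converges to some $z_j\in X$ (possible by compactness of $X$), the same argument as in Lemma~\ref{lem-f} gives $A=\{z_1,\ldots,z_N\}$, so indeed $|A|\le N$.

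For the matching lower bound I would invoke minimality of $Y'$ a second time. Set $M=\min_{A\in Y'}|A|$, which satisfies $M\le N$ by the previous step, and pick $A_0\in Y'$ realising this minimum. Every translate $tA_0$ has cardinality exactly $M$ because $T$ acts by homeomorphisms, and by the limit argument of the previous paragraph every element of $\overline{TA_0}$ has cardinality at most $M$. Minimality of $Y'$ forces $\overline{TA_0}=Y'$, and therefore $N=|\pi^{-1}(y_0)|\le M$, so $M=N$ and every element of $Y'$ has cardinality exactly $N$.

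The only step that requires any thought is this last minimality argument: the upper bound $|A|\le N$ follows at once from compactness, but equality requires ruling out elements of $Y'$ with strictly fewer than $N$ points, since otherwise the orbit closure of such a minimal element would consist entirely of sets of cardinality less than $N$, contradicting the presence of $\pi^{-1}(y_0)$ inside $Y'$. Everything else is a routine application of Proposition~\ref{almost N-1} and Lemma~\ref{lem-f}.
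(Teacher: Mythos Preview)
Your proof is correct and follows essentially the same approach the paper intends. The paper's proof is extremely terse --- it simply says ``By Lemma~\ref{lem-f}'' --- and Lemma~\ref{lem-f} by itself only yields the upper bound $|A|\le N$, since Hausdorff limits of $N$-point sets may collapse. You have correctly identified that the matching lower bound requires the additional minimality argument: if some $A_0\in Y'$ had fewer than $N$ points, then Lemma~\ref{lem-f} would force every element of $\overline{TA_0}=Y'$ to have at most $|A_0|<N$ points, contradicting $\pi^{-1}(y_0)\in Y'$. This step is implicit in the paper but genuinely needed, so your write-up is if anything more complete than the original.
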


The following result is well known, and for completeness we include a proof.
\begin{lem} \label{finite-to-one}
Let $\pi: X \rightarrow Y$ be a finite to one extension (i.e. $\pi^{-1}(y)$ is finite for all $y\in Y$)
of the minimal systems $(X,T)$ and $(Y,T)$. Then the following conditions are equivalent:
\begin{enumerate}
  \item $\pi$ is open;
  \item $\pi$ is distal;
  \item $\pi$ is equicontinuous;
  \item $\pi$ is a factor of a finite group extension, i.e. there are extensions $\pi':Z\rightarrow X,
  \phi: Z\rightarrow Y$ such that $\phi=\pi\circ \pi'$ and $\phi$ is a finite group extension
  $$
  \xymatrix{
  X \ar[d]_{\pi} & Z \ar[l]_{\pi'} \ar[dl]^{\phi}      \\
  Y }
  $$
\end{enumerate}
In this case there exists $N\in \N$ such that $\pi$ is an $N$-to-1 map.
\end{lem}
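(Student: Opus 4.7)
The plan is to establish the cyclic chain of implications $(3) \Rightarrow (2) \Rightarrow (1) \Rightarrow (3)$, followed by $(3) \Leftrightarrow (4)$, with the constancy of the fiber size $N$ emerging as part of the argument for $(1) \Rightarrow (3)$.

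The implications $(3) \Rightarrow (2)$ and $(2) \Rightarrow (1)$ are standard. For $(3) \Rightarrow (2)$, equicontinuity of $\pi$ with $\epsilon = d(x_1, x_2)/2$ provides a uniform lower bound on $d(tx_1, tx_2)$ over all $t \in T$ whenever $\pi(x_1) = \pi(x_2)$ and $x_1 \neq x_2$, ruling out proximality in fibers. For $(2) \Rightarrow (1)$, I would invoke the classical theorem of Furstenberg that any distal extension between minimal systems is open, which combines the semi-open property enjoyed by every factor map of minimal systems with the distal fiber structure.

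For $(1) \Rightarrow (3)$, I would use the hyperspace technique from Subsection~\ref{sub:ellis}. The set-valued map $\pi^{-1}: Y \to 2^X$ is always upper semi-continuous, and openness of $\pi$ provides lower semi-continuity, hence continuity in the Hausdorff topology. Applying Proposition~\ref{almost N-1} to the finite-to-one extension $\pi$ supplies an integer $N$ and a residual $Y_0 \subseteq Y$ on which $|\pi^{-1}(y)| = N$. Continuity of $\pi^{-1}$ together with $T$-equivariance and minimality of $Y$ then promotes this to $|\pi^{-1}(y)| = N$ for every $y \in Y$. On the compact space $Y$, the function
$$\rho(y) := \min\{d(x,x') : x \neq x' \in \pi^{-1}(y)\}$$
is continuous and strictly positive, hence attains a positive minimum $\delta_0 > 0$. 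Choosing any $\delta < \delta_0$ in the defining condition for equicontinuity renders it vacuous, so $\pi$ is equicontinuous.

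For $(3) \Leftrightarrow (4)$, the implication $(4) \Rightarrow (3)$ is immediate since the compact group $K$ acts on $Z$ by homeomorphisms commuting with $T$, making $\phi$ equicontinuous, and any factor of an equicontinuous extension remains equicontinuous. For $(3) \Rightarrow (4)$, I would construct $Z$ as a $T$-minimal subset of the $N$-fold distinct fibered product
$$R^{(N)}_\pi = \{(x_1, \ldots, x_N) \in X^N : \pi(x_1) = \cdots = \pi(x_N), \ x_i \neq x_j \text{ whenever } i \neq j\},$$
which carries the diagonal $T$-action together with a commuting $S_N$-action by coordinate permutation. Setting $K := \{\sigma \in S_N : \sigma Z = Z\}$, the natural maps $\pi': Z \to X$ (projection onto the first coordinate) and $\phi: Z \to Y$ are $T$-factor maps with $\phi = \pi \circ \pi'$. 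The main obstacle, and in my view the only subtle step, is to verify that $K$ acts freely and transitively on each fiber of $\phi$: this requires showing that every ordering of a fiber of $\pi$ over any $y \in Y$ belongs to $Z$, which one derives from minimality of $Z$ together with equicontinuity (which forces the ``ordering pattern'' of fibers of $\pi$ to be rigid under the $T$-action). Granting this, $\phi: Z \to Y$ is the desired finite group extension by $K$, completing the cycle.
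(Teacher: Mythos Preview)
Your argument for the equivalences $(1)\Leftrightarrow(2)\Leftrightarrow(3)$ is correct and essentially the same as the paper's. The paper proves $(1)\Rightarrow(3)\Rightarrow(2)\Rightarrow(1)$; your ``vacuous'' equicontinuity step is exactly the paper's Claim (existence of $\epsilon_0>0$ separating distinct points in every fiber), and your use of continuity of $\pi^{-1}$ together with Proposition~\ref{almost N-1} to get constant fiber size $N$ matches the paper's argument. One minor difference: for $(2)\Rightarrow(1)$ you appeal to the general theorem that distal extensions of minimal systems are open, whereas the paper gives a short self-contained argument using only the finite-to-one hypothesis and upper semi-continuity of $\pi^{-1}$.

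For $(3)\Rightarrow(4)$ your construction is right and parallels the paper's (the paper builds $Z$ as the orbit closure of an ordered fiber $(x_1,\ldots,x_N)$ and identifies $K$ with the group $E_{y_0}$ of permutations realized by limits of the $T$-action, which is the same as your $K=\{\sigma\in S_N:\sigma Z=Z\}$). However, your description of the ``main obstacle'' is incorrect: it is \emph{not} true, and not needed, that every ordering of each fiber $\pi^{-1}(y)$ lies in $Z$. What you must show is that $K$ acts transitively on each $\phi$-fiber, and this is much easier than you suggest. If $z,z'\in Z$ with $\phi(z)=\phi(z')$, then $z'=\sigma z$ for some $\sigma\in S_N$ (both are orderings of the same $N$-point set); since $\sigma$ commutes with $T$, the set $\sigma Z$ is $T$-minimal, and $\sigma Z\cap Z\ni z'$ forces $\sigma Z=Z$, i.e.\ $\sigma\in K$. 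Freeness is immediate from distinctness of the coordinates. So the obstacle you flag does not exist in the form you state it.
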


\begin{proof}

(1)$\Rightarrow$ (3):
Fix $y\in Y$ such that $|\pi^{-1}(\{y\})|=\min_{z\in Y} |\pi^{-1}(\{z\})|$. Let $\pi^{-1}(\{y\})=\{x_1,\ldots,x_N\}$. From
openness of $\pi$ one has that $$\pi^{-1}(\{\lim_{i}{t_i}y\})=\lim_{i}t_i
\pi^{-1}(\{y\})=\{\lim_{i}t_ix_1,\ldots, \lim_{i}t_i x_N\}$$ for any sequence $\{t_i\}\subset T$ having that all limits exist, and that
the cardinality of the set is $N$.  Hence, by minimality, all
fibers of $\pi$ have the same cardinality $N$, which proves the
map is constant to one.

Now we show that $\pi$ is equicontinuous. First we have the following claim:

\medskip

{\noindent Claim : There is some $\ep_0>0$ such that if $(x,x')\in R_\pi$ and $d(x,x)<\ep_0$ then $x=x'$.}

\medskip

If Claim does not hold, then for any $k\in \N$ there is
$(x_k,x'_k)\in R_\pi$ with $d(x_k,x'_k)<1/k$ and $x_k\neq x_k'$. Let $y_k=\pi(x_k)=\pi(x'_k)$
and assume $y_k\to y \in Y, k\to\infty$. Since $\pi$ is open,
$\pi^{-1}(\{y_k\}) \to \pi^{-1}(\{y\}), k\to\infty$ in the Hausdorff topology. Note that $\pi$
is $N$ to 1 extension, and $|\pi^{-1}(y)|=|\pi^{-1}(y_k)|=N$ for all $k\in \N$. Let $\pi^{-1}(y)=\{x_1,\ldots,x_N\}$
and let $U_1,\ldots,U_N$ be disjoint closed neighbourhoods of $x_1,\ldots,x_N$ with $\d=\min_{1\le i\neq j\le N}d(U_i,U_j)>0$.
Then for large enough $k\in \N$ with $1/k<\d$, each $U_1,\ldots,U_N$ contains just one point of $\pi^{-1}(y_k)$
and $x_k,x_k'$ are in the different neighbourhoods which implies $d(x_k,x_k')>\d>1/k$. A contradiction!
Hence we have Claim.

If $\pi$ is not equicontinuous, then there exists $\ep>0$ such that for any $k\in \N$ there are
$(x_k,x'_k)\in R_\pi$ and $t_k\in T$ with $d(x_k,x'_k)<1/k$ and
$d({t_k}x_k, {t_k}x'_k)\ge \ep$. By Claim, this is impossible. Thus $\pi$ is
equicontinuous.

(3)$\Rightarrow$(2): It is obvious by definition.

(2)$\Rightarrow$(1): Given any $y_1,y_2\in Y$. Since $Y$ is minimal, there is a sequence $\{t_i\}_{i=1}^\infty$ such that
$\lim_{i\rightarrow \infty}t_iy_1=y_2$ and the limit $\lim_{i\rightarrow \infty} \pi^{-1}(t_iy_1)$ exists. Since $\pi^{-1}(t_iy_1)=t_i\pi^{-1}(y_1)$, $\pi$ is distal and $\pi^{-1}$ is a u.s.c. map, one has
$$|\pi^{-1}(y_2)|\ge |\lim_{i\rightarrow \infty} \pi^{-1}(t_iy_1)|=|\pi^{-1}(y_1)|.$$
By symmetry, one also has $|\pi^{-1}(y_1)|\ge |\pi^{-1}(y_2)|$. Thus
$$|\pi^{-1}(y_2)|= |\lim_{i\rightarrow \infty} \pi^{-1}(t_iy_1)|=|\pi^{-1}(y_1)|<\infty$$
and so $\pi^{-1}(y_2)=\lim_{i\rightarrow \infty} t_i\pi^{-1}(y_1)$.
This implies that
$Y'=\{\pi^{-1}(y):y\in Y\}$ and all
fibers of $\pi$ have the same cardinality since $y_1,y_2$ are arbitrary.
Since $Y'$ is closed, we have $$Y'=\{\pi^{-1}(y):y\in Y\}=\overline{\{\pi^{-1}(y): y\in Y\}}=\widetilde{Y},$$
 and so $Y_c=Y$, i.e. $\pi$ is open.

\medskip

Now have showed that (1)-(3) are equivalent. Next we show they are equivalent to (4).
It is easy to see that (4) implies (1). We will show that (2) implies (4). Let $\pi: X\rightarrow Y$ be an $N$ to 1 distal extension.
Fix a point $y_0\in Y$, and let $\pi^{-1}(y_0)=\{x_1,x_2,\ldots,x_N\}$.
Let $z_0=(x_1,x_2,\ldots, x_N)\in X^N$, and
$$Z=\overline{\O}(z_0,T)\subseteq X^N.$$
Since $\pi$ is distal, $z_0$ is a minimal point of $(X^N,T)$ and $(Z,T)$ is a minimal system.
Define $\pi': Z\rightarrow X$ by $(z_1,\ldots,z_N)\mapsto z_1$, i.e. the projection on the
first coordinate, and $\phi=\pi\circ \pi': Z\rightarrow Y$. It is easy to verify that $
\pi',\phi$ are factor maps. It is left to show $\phi$ is a finite group extension, that is there exists a
finite group $K$ such that $K$ acts continuously on $X$ from the right, and  the fibers of $\phi$ are the $K$-orbits in $Z$:
$\phi^{-1}(\{\phi(z)\})=zK$ for any $z\in Z$.

Let $E_{y_0}=\{\sigma: \{x_1,\ldots,x_N\}\rightarrow \{x_1,\ldots,x_N\}: \exists
\ t_i\in T \ \text{such that} \ t_iy_0\to y_0, t_ix_j\to \sigma(x_j), j=1,\ldots,N\}$.
It is easy to verify that $E_{y_0}$ is subgroup of the
permutation group on $\{x_1,\ldots,x_N\}$ and
\begin{equation}\label{s1}
  E_{y_0}x_0=\{\sigma(x_0): \sigma\in E_{y_0}\}=\{x_1,\ldots,x_N\}
\end{equation}
for all $x_0\in \{x_1,\ldots,x_N\}$.

For each $\sigma\in E_{y_0}$, define a map $H_\sigma: Z\rightarrow Z$ such that  for any convergent net $t_i\in T$ (i.e., the limit $\lim_i t_iz_0$ exist)
$$H_\sigma(\lim_i t_iz_0)= \lim_i t_i\sigma z_0,$$
where $\sigma z_0=(\sigma(x_1),\ldots, \sigma(x_N))$. First we need to show that $H_\sigma$ is
well defined. If there are nets $\{t_i\}_i$ and $\{t_i'\}_i$ such that $\lim_it_i z_0=\lim_it_i'z_0$.
Then for all $j\in \{1,\ldots,N\}$, $\lim_it_ix_j=\lim_it_i'x_j$. It follows that $\lim_it_i\sigma (x_j)=\lim_it_i' \sigma(x_j)$
for all $j\in \{1,\ldots,N\}$. That is, $$H_\sigma(\lim_i t_iz_0)= \lim_i t_i\sigma z_0= \lim_i t_i'\sigma z_0 =H_\sigma(\lim_i t_i'z_0),$$
and $H_\sigma$ is well defined.
In a similar way, we can verify that $H_\sigma$ is a homeomorphism for all $\sigma\in E_{y_0}$,
and $H_{\sigma\sigma'}=H_{\sigma}H_{\sigma'}$ for all $\sigma,\sigma'\in E_{y_0}$. Now let
$$K=\{H_\sigma: \sigma\in E_{y_0}\}.$$
Then $K$ is a finite group acts continuously on $X$ and by (\ref{s1}) we can show that
$\phi^{-1}(\{\phi(z)\})=zK$ for any $z\in Z$. That is, $\phi$ is finite group extension of $Y$.
\end{proof}

\begin{rem}

\begin{enumerate}
  \item For the equivalence of (1)-(3), see \cite{MS} or \cite[Chapter V, 6.5]{Vr}.
  In general, $\pi$ is equicontinuous if and only if it is a factor of a group
  extension \cite[Chapter 14, Theorem 1]{Au88}. Here (4) is only a special case
  of this result, and our proof of (4) follows from the one of  \cite[Chapter 14, Theorem 1]{Au88}.
  \item In general, an open finite to one extension may not be a finite group extension.
  For example, Let $G$ be a non-abelian finite group with a non normal subgroup $H$.
  Let $X=G/H$, $T=G$, and let $Y$ be the trivial system. Then $\pi: X\rightarrow Y$ is an open finite to one extension
      but not a finite group extension.
\end{enumerate}

\end{rem}

To end this section we cite Sacker-Sell's result, which give more information about finite to one extensions.

\begin{thm}\cite{SS} \label{Sacker-Sell}
Let $\pi: X \rightarrow Y$ be an extension of t.d.s. $(X,T)$ and $(Y,T)$.
Assume that $(Y,T)$ is minimal. Then the following conditions are equivalent:
\begin{enumerate}
  \item $\pi$ is distal and for some $y_0\in Y$, $|\pi^{-1}(y_0)|=N$, where $N\in \N$;
  \item $|\pi^{-1}(y)|=N$ for all $y\in Y$, where $N\in \N$;
  \item $X$ is an $N$-fold covering of $Y$, i.e. $|\pi^{-1}(y)|=N$ for all $y\in Y$ and
  for each $y\in Y$ there is an open neighbourhood $V$ of $y$ such that $\pi^{-1}(V)$
  consists of $N$ disjoint open sets $U_i$ and $\pi|_{U_i}: U_i\rightarrow V$ is a homeomorphism, $i=1,2,\ldots, N$.
\end{enumerate}
Finally, if any of these hold, then $X$ can be expressed as the disjoint union
$X=X_1\cup\ldots \cup X_k$ of compact minimal sets, where each $X_i$ an $n_i$-fold covering
of $Y$ and $n_1+\ldots +n_k=N$.
\end{thm}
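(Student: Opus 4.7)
The plan is to prove the equivalences via the cycle $(3)\Rightarrow (2)\Rightarrow (1)\Rightarrow (2)\Rightarrow (3)$, and then deduce the decomposition as a consequence. The implication $(3)\Rightarrow (2)$ is immediate, so the real work is the three remaining steps plus the decomposition.

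For $(2)\Rightarrow (1)$, I would verify that $\pi$ is distal: if some $(x,x')\in R_\pi$ with $x\ne x'$ were proximal, a net $t_i$ with $t_i x, t_i x'\to z$ could be extended, after passing to a subnet, so that $t_i x_j\to z_j$ for all remaining $x_j\in\pi^{-1}(\pi(x))\setminus\{x,x'\}$; then the set $\{z,z_3,\dots, z_N\}$ sits inside $\pi^{-1}(\pi(z))$ with cardinality at most $N-1$, contradicting $|\pi^{-1}(\pi(z))|=N$. For $(1)\Rightarrow (2)$, fix $\pi^{-1}(y_0)=\{x_1,\dots,x_N\}$. For an arbitrary $y\in Y$, minimality of $(Y,T)$ provides a net $t_i y_0\to y$, and after a subnet each $t_i x_j\to z_j\in\pi^{-1}(y)$; distality forces the $z_j$ to be pairwise distinct, so $|\pi^{-1}(y)|\ge N$. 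Running the same argument in reverse with a net $t_i y\to y_0$ shows any $N+1$ distinct points of $\pi^{-1}(y)$ would produce $N+1$ distinct limits in the $N$-point set $\pi^{-1}(y_0)$, giving the reverse inequality.

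For $(2)\Rightarrow (3)$, the equivalence $(1)\Leftrightarrow (2)$ already proved shows $\pi$ is a distal finite-to-one extension of a minimal system, so Lemma \ref{finite-to-one} gives that $\pi$ is equicontinuous and open. In particular $\pi^{-1}\colon Y\to 2^X$ is continuous in the Hausdorff topology (u.s.c.\ is automatic, and l.s.c.\ follows from openness of $\pi$). To produce the local chart at $y$, pick pairwise disjoint open neighbourhoods $W_1,\dots, W_N$ of the $N$ points of $\pi^{-1}(y)$, use continuity of $\pi^{-1}$ to find $V_0\ni y$ with $\pi^{-1}(V_0)\subset \bigcup_i W_i$, refine to $V=V_0\cap \bigcap_i \pi(W_i)$ (open because $\pi$ is open), and set $U_i=\pi^{-1}(V)\cap W_i$. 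Constant fiber size ensures $\pi|_{U_i}\colon U_i\to V$ is a continuous bijection, and openness of $\pi$ upgrades it to a homeomorphism.

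For the final decomposition, distality of $\pi$ together with minimality of $(Y,T)$ forces every point of $X$ to be a minimal point, so each $X_j:=\overline{Tx_j}$ is a minimal subsystem, and two such sets are either equal or disjoint. Deleting repetitions yields $X=X_1\cup\dots\cup X_k$ with $\pi|_{X_i}$ a surjection of constant finite fiber size $n_i$ (again by the already-proven equivalence applied to $\pi|_{X_i}$), and counting the elements of $\pi^{-1}(y_0)$ across the components gives $n_1+\dots+n_k=N$. The principal obstacle I foresee is the local trivialization step in $(2)\Rightarrow (3)$: simultaneously exploiting openness of $\pi$ and continuity of $\pi^{-1}$ to arrange that each sheet $U_i$ is open and maps onto the \emph{same} neighbourhood $V$ requires the careful two-step shrinking above, even though the underlying intuition that an open map with constant finite fiber size is a covering is standard.
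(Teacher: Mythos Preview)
The paper does not actually prove this theorem: it is quoted from Sacker--Sell \cite{SS} and stated without proof, so there is no ``paper's own proof'' to compare against.  That said, your outline has one genuine gap.

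In your argument for $(2)\Rightarrow(1)$ you produce, from a proximal pair $(x,x')$ in $\pi^{-1}(y)$, a net $t_i$ with $t_i x,t_ix'\to z$ and $t_ix_j\to z_j$ for $j\ge 3$, and then assert that the inclusion
\[
\{z,z_3,\dots,z_N\}\subseteq \pi^{-1}(\pi(z)),\qquad \bigl|\{z,z_3,\dots,z_N\}\bigr|\le N-1,
\]
contradicts $|\pi^{-1}(\pi(z))|=N$.  It does not: a proper subset of cardinality $\le N-1$ is perfectly compatible with the ambient fibre having $N$ points.  What you would need is $\{z,z_3,\dots,z_N\}=\pi^{-1}(\pi(z))$, i.e.\ lower semicontinuity of $\pi^{-1}$ at $\pi(z)$, and that is exactly the openness of $\pi$ you have not yet established.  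One way to close the gap with the tools in this paper is to pass to the hyperspace: the set $B=\{z,z_3,\dots,z_N\}$ lies in $\widetilde Y=\overline{\{\pi^{-1}(y'):y'\in Y\}}\subseteq 2^X$, its $T$--orbit closure consists only of sets with at most $N-1$ elements (Lemma~\ref{lem-f}), yet that orbit closure must contain the unique minimal set $Y'\subseteq\widetilde Y$, whose members all have exactly $N$ elements (Corollary~\ref{cor-f}); this is the desired contradiction.

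A smaller point: in $(2)\Rightarrow(3)$ you invoke Lemma~\ref{finite-to-one}, whose \emph{statement} assumes both $(X,T)$ and $(Y,T)$ minimal, whereas here $X$ need not be minimal (indeed the conclusion of the theorem says $X$ decomposes into several minimal pieces).  This is harmless in practice, because the proof of the equivalences (1)$\Leftrightarrow$(2)$\Leftrightarrow$(3) in Lemma~\ref{finite-to-one} uses only the minimality of $Y$; but you should say so explicitly rather than cite the lemma as a black box.  The remaining steps --- $(1)\Rightarrow(2)$, the local trivialization in $(2)\Rightarrow(3)$, and the decomposition into minimal $n_i$--fold covers --- are fine.
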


In \cite{SS}, another main result is that an open finite-to-one extension of an
equicontinuous system is an equicontinuous one again as long as the phase
group was semicompactly generated (i.e., there is a compact $K\subset T$ such that every open $V\supset K$ generates $T$).

\section{Systems with finitely many ergodic measures}\label{sec-proof-main}

In this section we prove one of the the main results, i.e. Theorem A. According to Proposition \ref{almost N-1}
it remains to show

\begin{thm}\label{main-result-form1}
Let $(X,T)$ be a minimal system  with $T$ amenable group.
If $\pi: (X,T)\rightarrow (X_{eq},T)$ satisfies the following conditions:
\begin{enumerate}
  \item $\pi$ is almost $N$ to one for some $N\in \N$;
  \item $(X,T)$ is bounded non-tame, i.e. ${\rm IT}_k(X,T)\setminus \D^{(k)}=\emptyset$
  for some $k\ge 2$,
\end{enumerate}
then $|M^{erg}_T(X)|\le N (k-1)^N$. 
\end{thm}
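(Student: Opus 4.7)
The plan is to execute the four-step strategy sketched after Theorem A. First I would build the commutative diagram
\[
\xymatrix{
X \ar[d]_{\pi} & X' \ar[d]^{\pi'} \ar[l]_{\sigma} \\
X_{eq} & Y' \ar[l]_{\tau}
}
\]
using the hyperspace construction of Section~\ref{subsec-finite-one}. Namely, take $Y' = \overline{\{\pi^{-1}(y) : y \in (X_{eq})_c\}} \subset 2^X$ together with its restricted $T$-action and projection $\tau : Y' \to X_{eq}$, $A \mapsto$ the unique $y$ with $A \subseteq \pi^{-1}(y)$. Because $\pi$ is almost $N$-to-one, Proposition~\ref{almost N-1} together with Corollary~\ref{cor-f} yields that every element of $Y'$ has cardinality exactly $N$ and that $\tau$ is an almost one-to-one extension. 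Then put
\[
X' = \{(A,x) \in Y' \times X : x \in A\},
\]
with $\pi'(A,x) = A$ and $\sigma(A,x) = x$. The identity $\pi \circ \sigma = \tau \circ \pi'$ is immediate, $\pi'$ has fibers of cardinality $N$, and at any $x \in X$ for which the fiber $\pi^{-1}(\pi(x))$ is itself a minimal point of $(2^X,T)$ the map $\sigma$ is one-to-one, making $\sigma$ almost one-to-one. Choosing $X'$ to be a minimal subsystem meeting each of these singleton preimages gives minimality, and $\pi'$ then becomes open (by Lemma~\ref{lem-f} finite fibers of constant cardinality converge coordinatewise).

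Next I would show that the length of any IT-tuple in $(Y',T)$ is at most $(k-1)^N$. The idea is: an IT-tuple $(A_1,\ldots,A_m)$ in $Y'$ consists of $N$-point subsets of $X$. Writing $A_j = \{x_{j,1},\ldots,x_{j,N}\}$, an infinite independence set for arbitrary neighbourhoods of the $A_j$'s produces, via Proposition~\ref{independent sets}, infinite independence sets for corresponding neighbourhoods of certain coordinate projections $x_{j,\ell_j} \in X'$. If $m > (k-1)^N$, then by a pigeonhole argument on the $N$ coordinates each taking at most $k-1$ ``colours'' (the labels available in the absence of $k$-IT-tuples in $X'$, which inherits this property from $X$ through the almost one-to-one extension $\sigma$), one produces a $k$-IT-tuple in $X'$, contradicting that $(X,T)$, hence $(X',T)$, has no $k$-IT-tuple. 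This combinatorial step, together with the technical use of Proposition~\ref{independent sets}, is the main obstacle in the argument.

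Third, I would apply Proposition~\ref{key-prop-numberofIT} to $(Y',T)$. Since $\tau : Y' \to X_{eq}$ is almost one-to-one and $(Y',T)$ has no $((k-1)^N+1)$-IT-tuple, that proposition (the case mentioned in Remark~(2) following Theorem~A) gives
\[
|M^{erg}_T(Y')| \le (k-1)^N.
\]
Since $\pi' : X' \to Y'$ is a (finite) $N$-to-one extension of minimal systems, each ergodic measure of $(X',T)$ pushes forward to an ergodic measure of $(Y',T)$, and at most $N$ ergodic measures of $X'$ lie over any given ergodic measure of $Y'$ (by disintegration along the finite fibers). Hence
\[
|M^{erg}_T(X')| \le N(k-1)^N.
\]

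Finally, since $T$ is amenable and $\sigma : X' \to X$ is almost one-to-one (in particular proximal), every ergodic $T$-invariant measure on $X$ admits a lift to an ergodic $T$-invariant measure on $X'$; indeed this lift is unique because $\sigma$ is proximal. It follows that the induced map $M^{erg}_T(X') \to M^{erg}_T(X)$ is surjective, so
\[
|M^{erg}_T(X)| \le |M^{erg}_T(X')| \le N(k-1)^N,
\]
which is the required bound. The delicate points are the verification that $X'$ is minimal with $\sigma$ almost one-to-one and $\pi'$ open, and the pigeonhole/independence-set argument bounding IT-tuples of $Y'$ by $(k-1)^N$; the remaining measure-theoretic inequalities are standard once the diagram is in place.
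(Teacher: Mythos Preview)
Your proposal follows the same four-step strategy as the paper: build the hyperspace diagram, bound the length of IT-tuples in $Y'$ by $(k-1)^N$, apply Proposition~\ref{key-prop-numberofIT} to $Y'$, and then pass the measure count back through $\pi'$ and $\sigma$. Steps 1, 3 and 4 match the paper essentially verbatim (the paper uses Lemma~\ref{lem-finite-measures} for the $N$-to-one step rather than disintegration, but this is cosmetic).

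Your Step~2, however, has two genuine slips in the mechanism. First, you invoke Proposition~\ref{independent sets} to transfer independence sets from $Y'$ to coordinates; that proposition is a Haar-measure lemma about a locally compact group $H$ and is only used \emph{inside} the proof of Proposition~\ref{key-prop-numberofIT}. It plays no role in the pigeonhole step. The paper instead observes that $Y'\subset M_N(X)$ and uses the factor map $p:X^N\to M_N(X)$, $(x_1,\ldots,x_N)\mapsto\{x_1,\ldots,x_N\}$; by Proposition~\ref{factor and IT tuple}(4) an $l$-IT-tuple $(y_1',\ldots,y_l')$ in $Y'$ lifts to an $l$-IT-tuple $(c_1,\ldots,c_l)$ in $X^N$. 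If $l>(k-1)^N$ then the $l$ distinct points $c_j\in X^N$ cannot all differ in coordinates taking at most $k-1$ values each, so some coordinate $i$ witnesses $k$ distinct values; projecting via $p_i:X^N\to X$ (again Proposition~\ref{factor and IT tuple}(4)) yields a $k$-IT-tuple in $X$ off the diagonal, contradicting the hypothesis.

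Second, you write that $(X',T)$ ``inherits'' the no-$k$-IT property from $(X,T)$ through the almost one-to-one extension $\sigma$. This is the wrong direction: Proposition~\ref{factor and IT tuple}(4) says IT-tuples of a factor lift to IT-tuples of the extension, so absence of $k$-IT-tuples passes \emph{down} from $X'$ to $X$, not up. The paper sidesteps this entirely by carrying out the pigeonhole in $X^N$ and $X$, never needing any IT-bound on $X'$. With these two corrections your argument becomes exactly the paper's.
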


\subsection{Some propositions and lemmas}\
\medskip

To prove Theorem \ref{main-result-form1}, we need the following result which was proved
to be true for $k=2$ in \cite [Proposition 3.3]{FGJO}.

\begin{prop}\label{independent sets}
Let $H$ be a locally compact second countable Hausdorff topological group
with left Haar measure $\Theta_H$, and let $k\in \N$ with $k\ge 2$. Suppose that
$V_{1},\ldots,V_{k}\subset H$ are compact subsets that satisfy
\begin{enumerate}
\item[(i)] $\overline{\text{int} \ V_i}=V_i$ for $i=1,2,\cdots,k$,

\item[(ii)] $\text{int}(V_{i})\cap \text{int}(V_{j})=\emptyset$ for all $1\le i\neq j\le k$,

\item[(iii)] $\Theta_{H}(\bigcap_{1\leq i\leq k}V_{i})>0$.
\end{enumerate}
Further, assume that $T\subset H$ is a dense subgroup and $\mathcal{G}\subset H$
is a residual set. Then there exists an infinite set $I\subset T$
such that for all $a\in\{1,2,\ldots,k\}^{I}$ there exists $h\in\mathcal{G}$
with the property that
\begin{equation}\label{eq: in the int}
h\in \bigcap_{t\in I} t^{-1} {\rm int}(V_{a_{t}}),\quad {\rm i.e.}\
th\in {\rm int}(V_{a_{t}}) \ \text{ for any }t\in I.
\end{equation}
\end{prop}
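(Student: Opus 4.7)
The proposition generalizes \cite[Proposition 3.3]{FGJO} from $k=2$ to arbitrary $k\geq 2$, and my plan is to adapt the inductive Baire--category construction from that paper. Write $U_i:=\operatorname{int}(V_i)$, so $V_i=\overline{U_i}$ by (i), and set $K:=\bigcap_{i=1}^k V_i$. The disjointness condition (ii) forces each $U_j$ to be disjoint from every $\overline{U_i}$ with $i\neq j$, so $K$ lies in the common boundary $\bigcap_i\partial U_i$; nevertheless $\Theta_H(K)>0$ by (iii). The crucial geometric consequence is that every open neighborhood of any $x\in K$ meets every $U_j$, since $K\subseteq V_j=\overline{U_j}$. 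Fix a compatible left-invariant metric on $H$ (possible since $H$ is locally compact second countable), and decompose $H\setminus\mathcal{G}=\bigcup_m F_m$ with each $F_m$ closed and nowhere dense.

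The plan is to construct distinct $t_1,t_2,\ldots\in T$ and, for each finite string $s\in\{1,\ldots,k\}^{<\omega}$, a nonempty precompact open set $W_s\subseteq H$ of diameter less than $2^{-|s|}$ satisfying $\overline{W_{s\cdot j}}\subseteq W_s\cap t_{|s|+1}^{-1}U_j$ for every $s$ and $j$, together with $\overline{W_s}\cap F_{|s|}=\emptyset$. Given such a tree, for each branch $a\in\{1,\ldots,k\}^{\mathbb{N}}$ the nested compact sets $\{\overline{W_{a|_n}}\}_n$ contract to a unique point $h_a$, which then lies in $t_n^{-1}U_{a_n}$ for every $n$ and avoids every $F_m$; hence $h_a\in\mathcal{G}$ and $I:=\{t_n:n\geq 1\}$ is the desired infinite subset of $T$.

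The inductive step at stage $n+1$ is the main obstacle. For a \emph{single} nonempty open $W\subseteq H$, there exists $t\in T$ such that $tW\cap U_j\neq\emptyset$ for every $j$: the set $KW^{-1}=\bigcup_{w\in W}Kw^{-1}$ is open (each $kW^{-1}$ is an open neighborhood of $kw^{-1}$) and has positive Haar measure, hence meets the dense subgroup $T$, and any $t\in T\cap KW^{-1}$ gives some $w\in W$ with $tw\in K$, so the open set $tW$ is a neighborhood of a point in $K$ and therefore intersects every $U_j$. The real difficulty is that at stage $n$ there are $k^n$ disjoint open sets $W_s$, and a priori there is no reason why a single $t_{n+1}\in T$ should satisfy $t_{n+1}W_s\cap U_j\neq\emptyset$ simultaneously for every $s\in\{1,\ldots,k\}^n$ and every $j$, since this requires $t_{n+1}\in\bigcap_s\bigcap_j U_jW_s^{-1}$ and the $W_s$ may live in very different parts of $H$.

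My proposed resolution is to enforce during the induction an additional \emph{concentration} property: at each level $n$, all the $W_s$ with $|s|=n$ are contained in a small common open neighborhood of an auxiliary point $h_n\in H$ chosen so that $t_ih_n\in K$ for every $i\leq n$. Such an $h_n$ is produced inductively by first picking $t_n\in T\cap Kh_{n-1}^{-1}$ (nonempty by the single-set argument applied to a small neighborhood of $h_{n-1}$), and then nudging $h_{n-1}$ into $\bigcap_{i=1}^n t_i^{-1}K$ using the positive Haar measure of $K$. With all $W_s$ concentrated near $h_n$, any choice of $t_{n+1}\in T$ with $t_{n+1}h_n\in K$ translates this common neighborhood near $K$, and the universal flexibility property (every neighborhood of a point of $K$ meets every $U_j$) lets one carve out, inside each $W_s$, a nonempty precompact open $W_{s\cdot j}\subseteq W_s\cap t_{n+1}^{-1}U_j$ of diameter less than $2^{-n-1}$ with $\overline{W_{s\cdot j}}\cap F_{n+1}=\emptyset$, the latter by the standard Baire shrinking against the nowhere-dense $F_{n+1}$. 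The technical heart of the proof --- and the part most delicate to extend from $k=2$ to general $k$ --- is verifying that this concentration can be maintained throughout the construction and that the measure-theoretic estimates controlling the intersections $\bigcap_{i=1}^n t_i^{-1}K$ continue to provide enough room at each stage.
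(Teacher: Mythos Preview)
Your overall architecture---a nested tree $\{W_s\}_{s\in\{1,\dots,k\}^{<\omega}}$ of precompact open sets with $\overline{W_{s\cdot j}}\subseteq W_s\cap t_{|s|+1}^{-1}U_j\setminus F_{|s|+1}$, diameters shrinking to zero, then reading off $h_a$ along each branch---is exactly the skeleton of the paper's proof, and you have correctly isolated the crux: producing a \emph{single} $t_{n+1}\in T$ that works simultaneously for all $k^n$ sets $W_s$ at level $n$.

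The gap is in your proposed resolution via a single auxiliary point $h_n\in\bigcap_{i\le n}t_i^{-1}K$. Since $K\cap U_j=\emptyset$ for every $j$ (the $U_j$ are pairwise disjoint open sets and $K\subseteq\bigcap_j\partial U_j$), the point $h_n$ lies in no $W_s$: each $W_s$ sits inside $\bigcap_{i\le n}t_i^{-1}U_{s_i}$, while $h_n$ sits on the common boundary. Consequently $t_{n+1}W_s$ is \emph{not} an open neighbourhood of $t_{n+1}h_n$, and the ``universal flexibility'' property---every neighbourhood of a point of $K$ meets every $U_j$---does not apply to $t_{n+1}W_s$. A small open set merely \emph{close to} $K$ can perfectly well lie inside a single $U_j$ or miss all of them. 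Relatedly, the intersection you propose to control, $\bigcap_{i\le n}t_i^{-1}K$, has only $n$ terms; what is actually needed is that each of the $k^n$ sets $W_s$ \emph{contain} a point that $t_{n+1}$ carries into $K$, and this forces a $k^n$-fold condition on $t_{n+1}$.

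The paper's mechanism is different in exactly this respect. Each $W_a$ is taken to be a closed ball $\overline{B_{r(n)}(\gamma_a)}$, and one tracks all $k^n$ centres $\gamma_a$ rather than a single $h_n$. Writing $\gamma_a=\xi_{a_1}\xi_{a_1a_2}\cdots\xi_{a_1\cdots a_n}$, the construction enforces $\sup_{|a|=j}d(e,\xi_a)\le\epsilon_j$ for a sequence $(\epsilon_j)$ chosen in advance so that $\sum_{j\ge 1}k^{j-1}(k-1)\,\eta^{C}(\delta_j^\infty)<1$, where $\eta^{C}(\epsilon)=\Theta_H^r(B_\epsilon(C))/\Theta_H^r(C)-1$ and $\delta_j^n=\sum_{\ell=j}^n\epsilon_\ell$. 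A separate inductive lemma (the $k\ge 2$ analogue of \cite[Lemma~3.5]{FGJO}) then yields
\[
\Theta_H^r\Bigl(\bigcap_{a\in\Sigma_n}C\gamma_a^{-1}\Bigr)\;\ge\;\Theta_H^r(C)\Bigl(1-\sum_{j=1}^{n}k^{j-1}(k-1)\,\eta^{C}(\delta_j^{n})\Bigr)\;>\;0
\]
for every $n$; note the exponential factor $k^{j-1}(k-1)$, which is the genuine cost of passing from $k=2$ to general $k$. Picking $h$ in this intersection gives $h\gamma_a\in C$ for every $a\in\Sigma_n$, so any $t_{n+1}\in T$ sufficiently close to $h$ satisfies $t_{n+1}^{-1}C\cap B_{r'(n+1)}(\gamma_a)\neq\emptyset$ for all $a$. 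Now the ball $B_{r'(n+1)}(\gamma_a)$ \emph{is} an open neighbourhood of a point of $t_{n+1}^{-1}C$, and your flexibility argument applies to it: since $C\subseteq\overline{\operatorname{int}V_i}$, one gets $B_{r'(n+1)}(\gamma_a)\cap t_{n+1}^{-1}\operatorname{int}(V_i)\neq\emptyset$ for every $i$, and the $U_{ai}$ can be carved out (with the usual Baire shrinking against $H\setminus G_{n+1}$).

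In short: replace your single $h_n$ by the full family of centres $\{\gamma_a\}_{|a|=n}$, and replace the $n$-fold intersection $\bigcap_{i\le n}t_i^{-1}K$ by the $k^n$-fold intersection $\bigcap_{|a|=n}C\gamma_a^{-1}$, controlled via the quantitative lemma above with a priori choice of the $\epsilon_j$.
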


The proof of the above proposition will be given in the appendix.


\medskip

In this subsection, the group $T$ is assumed to be an  amenable group.
Let $(X,T)$ be a t.d.s. and $x_0\in X$. Let ${{\Phi}}=\{\Phi_N\}_{N\ge 1}$ be a F{\o}lner
sequence of $T$ and $\mu\in M_T(X)$. We say that $x_0$ is {\em generic for $\mu$ along $\Phi$} if
$$\frac{1}{|\Phi_N|}\sum_{t\in \Phi_N}\delta_{tx_0}\rightarrow \mu, \ \text{weakly$^*$ as }\ N\to\infty,$$
where $\d_x$ is the Dirac mass at $x$. This is equivalent to that for all $f\in C(X)$,
$$\frac{1}{|\Phi_N|}\sum_{t\in \Phi_N}f(tx_0)\rightarrow \int f \mu, \ N\to\infty.$$

A F{\o}lner sequence ${{\Phi}}=\{\Phi_N\}_{N\ge 1}$ is {\em tempered} if there exists a constant $C>0$ such that for all $N$
$$\left| \bigcup_{k<N}\Phi_k^{-1}\Phi_N\right|\le C|\Phi_N|.$$
Note that every F{\o}lner sequence  admits a tempered F{\o}lner subsequence \cite{L}.

\begin{thm}\cite{L}\label{Thm-L}
Let $T$ be an amenable group acting on a measure space $(X,\X,\mu)$ by measure preserving
transformation, and let $\Phi=\{\Phi_N\}_{N\ge 1}$ be a tempered F{\o}lner sequence.
Then for any $f\in L^1(X,\mu)$, there is a $T$-invariant $f^*\in L^1(X,\mu)$ such that
$$\lim_{N\to\infty}\frac{1}{|\Phi_N|}\sum_{t\in \Phi_N}f(tx)=f^*(x) \quad a.e.$$
In particular, if the $T$ action is ergodic,
$$\lim_{N\to\infty}\frac{1}{|\Phi_N|}\sum_{t\in \Phi_N}f(tx)=\int f(x)d\mu(x) \quad a.e.$$
\end{thm}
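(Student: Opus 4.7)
The plan is to follow Lindenstrauss's strategy from \cite{L}, which splits into a weak-type maximal inequality and an $L^2$ mean convergence statement, glued together by a standard Banach-principle density argument. Write $A_{\Phi_N}f(x) = \frac{1}{|\Phi_N|}\sum_{t\in\Phi_N}f(tx)$ and $M^*f(x) = \sup_{N\ge 1} A_{\Phi_N}|f|(x)$.

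First I would establish the weak-$(1,1)$ maximal inequality
$$\mu\{x\in X : M^*f(x) > \lambda\} \le \frac{C'\|f\|_1}{\lambda},$$
with $C'$ depending only on the temperedness constant $C$. The combinatorial heart is Lindenstrauss's random covering lemma: given finitely many Følner sets $\{\Phi_{N(t)}t : t\in F\}$ translated to sit inside an orbit, one can probabilistically extract a subfamily that covers a definite fraction of $\bigcup_{t\in F}\Phi_{N(t)}t$ while having bounded overlap. Temperedness $|\bigcup_{k<N}\Phi_k^{-1}\Phi_N|\le C|\Phi_N|$ is precisely the estimate needed to control how much mass a newly selected set can already have been absorbed by previously chosen ones with larger indices. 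From this covering lemma the weak maximal bound is deduced by a transference argument on finite pieces of the orbit, exactly as in the classical Hardy-Littlewood proof.

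Next I would prove mean convergence in $L^2$. Decompose $L^2(X,\mu) = \mathcal{I}\oplus\mathcal{B}$, where $\mathcal{I}$ is the closed subspace of $T$-invariant functions and $\mathcal{B}$ is the closed linear span of coboundaries $\{g - g\circ s : g\in L^2,\ s\in T\}$. The averages act as the identity on $\mathcal{I}$, and for a single coboundary $g - g\circ s$ one checks directly $\|A_{\Phi_N}(g - g\circ s)\|_2 \le 2\|g\|_\infty \, |\Phi_N\triangle s\Phi_N|/|\Phi_N| \to 0$ by the Følner property. Hence $A_{\Phi_N}f \to f^*$ in $L^2$ for every $f\in L^2$, where $f^* := E(f\mid \mathcal{I})$.

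Finally I would combine these two ingredients. Standard Banach-principle reasoning shows that the set $\mathcal{C} = \{f\in L^1 : A_{\Phi_N}f \to f^* \text{ a.e.}\}$ is closed in $L^1$ (because $\{M^* > \lambda\}$ has controlled measure by the maximal inequality); the $L^2$ mean convergence step shows $\mathcal{C}$ contains the dense subclass of invariant functions plus coboundaries of bounded functions (the latter verified by direct telescoping using the Følner condition and a Chebyshev estimate). Hence $\mathcal{C} \supseteq L^2\cap L^\infty$, and by $L^1$-density we conclude $\mathcal{C} = L^1$, which is the first conclusion. The ergodic case is then the observation that $\mathcal{I}$ consists of constants, so $f^* = \int f\,d\mu$ almost everywhere.

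The main obstacle is unquestionably the random covering lemma underlying the maximal inequality: for a general amenable group, Følner sets can overlap arbitrarily, so one has no deterministic Vitali covering as in $\mathbb{Z}$ or $\mathbb{R}^d$. Lindenstrauss's probabilistic selection, and the verification that the temperedness hypothesis is exactly enough to control the expected overlap, is the delicate technical step; everything else is a routine transference-plus-density argument.
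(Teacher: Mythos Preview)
The paper does not prove this theorem at all; it is quoted directly from Lindenstrauss \cite{L} as a black box and used only to derive Corollary~\ref{Lem-H} on generic points. Your outline is a faithful sketch of Lindenstrauss's own argument (random covering lemma $\Rightarrow$ weak-$(1,1)$ maximal inequality, mean ergodic theorem on $L^2$, Banach principle to pass to $L^1$), so there is nothing to compare: you have supplied a proof where the paper simply cites one.
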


By Theorem \ref{Thm-L}, it is easy to show the following corollary.

\begin{cor}\label{Lem-H}
Let $(X,T)$ be a t.d.s. with $T$ amenable group, $\mu\in M_T^{erg}(X)$ and
$\Phi=\{\Phi_N\}_{N\ge 1}$ a tempered F{\o}lner sequence. Then $\mu$-almost every $x\in X$ is generic for $\mu$ along $\Phi$.
\end{cor}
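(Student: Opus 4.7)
The plan is to deduce Corollary \ref{Lem-H} from Theorem \ref{Thm-L} via a standard separability argument, upgrading pointwise convergence for a fixed $f$ to simultaneous convergence for all $f\in C(X)$.

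First, since $X$ is a compact metric space, $C(X)$ is separable in the supremum norm; I would fix a countable dense subset $\{f_k\}_{k\ge 1}\subset C(X)$. For each $k$, the function $f_k$ lies in $L^1(X,\mu)$, so by Theorem \ref{Thm-L} applied to the ergodic action $(X,\X,\mu,T)$ and the tempered F{\o}lner sequence $\Phi=\{\Phi_N\}_{N\ge 1}$, there exists a full measure set $X_k\subset X$ on which
\[
\lim_{N\to\infty}\frac{1}{|\Phi_N|}\sum_{t\in \Phi_N}f_k(tx)=\int f_k\, d\mu.
\]
Setting $X_0=\bigcap_{k\ge 1}X_k$, we still have $\mu(X_0)=1$.

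Next, I would show that every $x\in X_0$ is generic for $\mu$ along $\Phi$. Fix $x\in X_0$ and an arbitrary $f\in C(X)$. Given $\varepsilon>0$, choose $k$ with $\|f-f_k\|_\infty<\varepsilon/3$. Then for every $N$,
\[
\left|\frac{1}{|\Phi_N|}\sum_{t\in\Phi_N}f(tx)-\frac{1}{|\Phi_N|}\sum_{t\in\Phi_N}f_k(tx)\right|\le \|f-f_k\|_\infty<\varepsilon/3,
\]
and similarly $\bigl|\int f\,d\mu-\int f_k\,d\mu\bigr|<\varepsilon/3$. Since $x\in X_k$, for $N$ large enough the average of $f_k$ is within $\varepsilon/3$ of $\int f_k\,d\mu$, so the triangle inequality gives
\[
\left|\frac{1}{|\Phi_N|}\sum_{t\in\Phi_N}f(tx)-\int f\,d\mu\right|<\varepsilon.
\]
This holds for all $f\in C(X)$, which is precisely the statement that $\frac{1}{|\Phi_N|}\sum_{t\in\Phi_N}\delta_{tx}\to\mu$ in the weak$^*$ topology.

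There is essentially no obstacle here, since the heavy lifting is done by Lindenstrauss' pointwise ergodic theorem already cited as Theorem \ref{Thm-L}; the only nontrivial point is the separability of $C(X)$ which allows the single-function almost-everywhere statement to be promoted to a simultaneous statement for all continuous test functions on a common full-measure set. I would just need to verify that the F{\o}lner sequence entering Theorem \ref{Thm-L} is indeed the same one used in the statement, which is given by hypothesis.
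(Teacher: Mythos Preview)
Your proof is correct and matches the paper's approach: the paper simply states that the corollary follows easily from Theorem~\ref{Thm-L} without giving details, and your separability argument is exactly the standard way to fill this in.
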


We will not use the following lemma in the paper, and it is of independent interest.
\begin{lem}
Let $\pi: (X,T)\rightarrow (Y,T)$ be a group extension with respect to a group $K$,
where $T$ is amenable. If $\mu\in M^{erg}_T(X)$ is also $K$-invariant, then $\pi_*^{-1}(\pi_*\mu)=\{\mu\}$.

Thus if for each $\nu\in M^{erg}_T(Y)$ there is some ergodic element $\mu\in \pi_*^{-1}(\nu)$ is
$K$-invariant, then $|M_T^{erg}(X)|= |M_T^{erg}(Y)|$.
\end{lem}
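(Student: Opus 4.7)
The plan is to show that any $T$-invariant lift of $\pi_*\mu$ must coincide with $\mu$, by averaging over $K$ and using the extremality of $\mu$, and then to read off the cardinality statement. First I would reduce to the case that $\mu'\in\pi_*^{-1}(\pi_*\mu)$ is itself $T$-ergodic: if $\mu'=\int \mu'_\omega\,d\tau(\omega)$ is its $T$-ergodic decomposition, then $\pi_*\mu=\int \pi_*\mu'_\omega\,d\tau(\omega)$ decomposes the ergodic measure $\pi_*\mu$ into $T$-invariant components, so uniqueness of the ergodic decomposition forces $\pi_*\mu'_\omega=\pi_*\mu$ for $\tau$-a.e.\ $\omega$. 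Hence it suffices to prove $\mu'=\mu$ whenever $\mu'$ is $T$-ergodic and $\pi_*\mu'=\pi_*\mu$.

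For such a $\mu'$ I would exploit the right $K$-action. Since $K$ and $T$ commute on $X$ and $\pi\circ k=\pi$ for every $k\in K$, each translate $k_*\mu'$ is again $T$-ergodic and still projects to $\pi_*\mu$. Form the Haar average $\tilde\mu':=\int_K k_*\mu'\,dm_K(k)\in M_T(X)$; then $\tilde\mu'$ is both $T$- and $K$-invariant, and its projection is still $\pi_*\mu$. The main obstacle is the following auxiliary claim: any $(T,K)$-invariant probability measure on $X$ is determined by its projection to $Y$. To see this one disintegrates along $\pi$; since each fiber $\pi^{-1}(y)=xK$ is a homogeneous $K$-space, $K$-invariance of the conditional measures forces them to be the unique $K$-invariant probability measure on that fiber. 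Applying this to $\mu$ and $\tilde\mu'$ gives $\tilde\mu'=\mu$.

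The identity $\mu=\int_K k_*\mu'\,dm_K(k)$ then displays the extreme point $\mu\in M_T(X)$ as a barycentre of the $T$-ergodic measures $\{k_*\mu':k\in K\}$; by uniqueness of the ergodic decomposition of $\mu$ we must have $k_*\mu'=\mu$ for $m_K$-a.e.\ $k$. Since $k\mapsto k_*\mu'$ is weak-$\ast$ continuous and $m_K$ has full support, this equality holds for every $k\in K$, and taking $k=e$ yields $\mu'=\mu$. For the second assertion, the hypothesis provides, for each $\nu\in M_T^{erg}(Y)$, an ergodic $K$-invariant $\mu\in\pi_*^{-1}(\nu)$, which by the first part is the unique $T$-invariant (hence unique ergodic) preimage of $\nu$; therefore $\pi_*$ restricts to a bijection $M_T^{erg}(X)\to M_T^{erg}(Y)$, and $|M_T^{erg}(X)|=|M_T^{erg}(Y)|$.
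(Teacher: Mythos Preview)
Your argument is correct, and it proceeds along a genuinely different route from the paper's proof. The paper argues via generic points: using Lindenstrauss's pointwise ergodic theorem along a tempered F{\o}lner sequence, one shows that if $x_0$ is $\mu$-generic then so is $x_0k$ for every $k\in K$ (because $\mu$ is $K$-invariant), so the set of $\mu$-generic points is fibre-saturated; then any ergodic $\eta$ with $\pi_*\eta=\pi_*\mu$ has a generic point lying in the same fibre as a $\mu$-generic point, forcing $\eta=\mu$. Your proof instead Haar-averages the competing measure $\mu'$ over $K$, uses disintegration to identify any $(T,K)$-invariant lift of $\pi_*\mu$ with $\mu$, and then invokes extremality of $\mu$ in $M_T(X)$ to pin down $\mu'$.

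Two remarks on what each approach buys. First, your argument never invokes a F{\o}lner sequence or a pointwise ergodic theorem; it works for any acting group $T$ for which $M_T(X)$ is non-empty, so the amenability hypothesis is not actually used in your version. Second, you explicitly carry out the reduction from an arbitrary $\mu'\in\pi_*^{-1}(\pi_*\mu)$ to an ergodic one via the ergodic decomposition, whereas the paper's proof as written treats only ergodic $\eta$; your reduction step fills that small gap and is needed for the full statement $\pi_*^{-1}(\pi_*\mu)=\{\mu\}$.
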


\begin{proof}
The proof is similar to the proof of Proposition 3.10 of \cite{F}.
Recall that $\pi$ is a {\it group} extension if there exists a
compact Hausdorff topological group $K$ such that the following
conditions hold: $K$ acts continuously on $X$ from the right: the
right action $X\times K\rightarrow X$, $(x,k)\mapsto xk$ is
continuous and $t(xk)=(tx)k$ for any $t\in T$ and $k\in K$;
the fibers of $\pi$ are the $K-$orbits in $X$:
$\pi^{-1}(\{\pi(x)\})=xK$ for any $x\in X$.

Suppose that $x_0$ is any generic point for $\mu$ along some tempered F{\o}lner sequence $\Phi$.
Then for any $k\in K$ and any continuous $f\in C(X)$,
$$\lim_{N\to\infty}\frac{1}{|\Phi_N|}\sum_{t\in \Phi_N}f(tx_0k)=\int f(xk)d\mu(x) =\int f(x) d\mu(x) \quad a.e.$$
and so $x_0k$ is also a $\mu$-generic point. Thus, if $x_0$ is $\mu$-generic,
the whole fiber of $\pi(x_0)$ is $\mu$-generic.

Suppose now that $\eta$ is any ergodic measure on $X$ such that $\pi_*\eta=\pi_*\mu$.
Then by Corollary \ref{Lem-H}, there is some $\mu$-generic point $x_0\in X$ and
$\eta$-generic point $x_1\in X$ along the same tempered F{\o}lner sequence such that $\pi(x_0)=\pi(x_1)$.
By above the whole fiber of $\pi(x_0)$ is $\mu$-generic, and hence $x_1$ is also $\mu$-generic.
It follows that $\eta=\mu$. That is , $\pi_*^{-1}(\pi_*\mu)=\{\mu\}$. The proof is completed.
\end{proof}

What we will use is the following.

\begin{lem}\label{lem-finite-measures}
Let $\pi: (X,T)\rightarrow (Y,T)$ be a finite to one extension, where $T$ is amenable and there is
some $N$ such that $|\pi^{-1}(y)|\le N, \forall y\in Y$. If $(Y,T)$ has finitely many ergodic measures,
then so does $(X,T)$. In fact, $|M_T^{erg}(X)|\le |M_T^{erg}(Y)|\cdot N$.
\end{lem}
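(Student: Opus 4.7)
The plan is to prove the bound $|M_T^{erg}(X)|\le |M_T^{erg}(Y)|\cdot N$ by showing that the pushforward map $\pi_*:M_T^{erg}(X)\to M_T^{erg}(Y)$ has fibers of size at most $N$. Since $\pi_*$ sends ergodic measures to ergodic measures, combining this fiber bound with the finiteness assumption on $(Y,T)$ gives the result immediately. So the entire task reduces to the following: fix an ergodic $\nu\in M_T^{erg}(Y)$, and show that there are at most $N$ ergodic $\mu\in M_T^{erg}(X)$ with $\pi_*\mu=\nu$.

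To do this, I will use generic points along a tempered F{\o}lner sequence. Fix such a sequence $\Phi=\{\Phi_N\}_{N\ge 1}$, which exists by the result of Lindenstrauss cited before Theorem~\ref{Thm-L}. Suppose $\mu_1,\ldots,\mu_k$ are pairwise distinct ergodic measures on $X$ with $\pi_*\mu_i=\nu$ for all $i$. Let
\[
A_i=\{x\in X: x \text{ is generic for }\mu_i \text{ along }\Phi\}.
\]
By Corollary~\ref{Lem-H}, $\mu_i(A_i)=1$, and the sets $A_i$ are pairwise disjoint since a point can be generic for at most one Borel probability measure. Pushing forward, $\nu(\pi(A_i))=\pi_*\mu_i(\pi(A_i))\geq \mu_i(A_i)=1$ for each $i$, so $B:=\bigcap_{i=1}^k \pi(A_i)$ satisfies $\nu(B)=1$; in particular $B\ne\emptyset$.

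Now pick any $y\in B$. For each $i\in\{1,\ldots,k\}$ there exists $x_i\in A_i$ with $\pi(x_i)=y$, i.e.\ a point $x_i\in\pi^{-1}(y)$ that is $\mu_i$-generic along $\Phi$. Since the sets $A_i$ are pairwise disjoint, the points $x_1,\ldots,x_k$ are pairwise distinct. Thus the fiber $\pi^{-1}(y)$ contains at least $k$ points, and the hypothesis $|\pi^{-1}(y)|\le N$ forces $k\le N$. This is exactly the required fiber bound for $\pi_*$.

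There is no serious obstacle: the only conceptual input is that generic points of distinct ergodic measures are automatically distinct, together with Lindenstrauss's pointwise ergodic theorem, which guarantees that the $\mu_i$-generic set has full $\mu_i$-measure and hence projects to a full $\nu$-measure set in $Y$. The hypothesis that $\pi$ is a \emph{uniformly} bounded finite-to-one extension (not merely almost finite to one) is essential for this argument, since we need the fiber bound to hold at every $y$, and in particular at the chosen $y\in B$.
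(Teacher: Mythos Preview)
Your argument is essentially the same as the paper's: both produce, over a single $\nu$-generic fiber, distinct $\mu_i$-generic points and then invoke the bound $|\pi^{-1}(y)|\le N$. The paper phrases this as a contradiction starting from $N+1$ measures, while you phrase it as a direct fiber bound $k\le N$, but this is purely cosmetic.

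One point you pass over that the paper makes explicit: to write $\nu(\pi(A_i))$ you need $\pi(A_i)$ to be $\nu$-measurable. Since $A_i$ is Borel and $\pi$ is continuous, $\pi(A_i)$ is analytic and hence universally measurable; the paper invokes Lusin's theorem for exactly this. It is worth inserting a sentence to that effect.
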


\begin{proof}

Let $\pi_*: M_{T}(X)\rightarrow M_T(Y), \mu\mapsto \mu\circ \pi^{-1}$. Then $\pi_*$ is a
surjective affine map and maps $M^{erg}_T(X)$ onto $M^{erg}_T(Y)$.
If $|M_T^{erg}(X)|> |M_T^{erg}(Y)|\cdot N$, then there are $\mu_1,\mu_2,\ldots , \mu_{N+1}\in M_T^{erg}(X)$  and $\nu\in M^{erg}_T(Y)$ such that
$$\pi_*(\mu_1)=\pi_*(\mu_1)=\ldots =\pi_*(\mu_{N+1})=\nu.$$
Suppose $W_i$ are the collection of $\mu_i$-generic points along some F{\o}lner sequence, $1\le i\le N+1$.
By Lusin theorem, $\pi(W_i)$ is an analysis set, therefore it is $\nu$-measurable. One has that
$$\displaystyle\nu \left(\bigcap_{1\leq i\leq N+1}\pi(W_i)\right)=1. $$
In particular, $\bigcap_{1\leq i\leq N+1}\pi(W_i)\neq \emptyset$ and let $y\in \bigcap_{1\leq i\leq N+1}\pi(W_i)$.

As $|\pi^{-1}(z)|\le N, \forall z\in Y$, it follows that there are $i\neq j\in \{1,2,\ldots,N+1\}$
such that $W_i\cap W_j\neq \emptyset$. It contradicts with the fact
$\mu_i \perp \mu_j$. Thus we have that $|M_T^{erg}(X)|\le |M_T^{erg}(Y)|\cdot N$.
\end{proof}

\subsection{Proof of Theorem \ref{main-result-form1}}\
\medskip

In the subsection we give the proof of Theorem \ref{main-result-form1}.
The following proposition is a key step to the proof of Theorem \ref{main-result-form1}.
\begin{prop}\label{key-prop-numberofIT} Let $(Y',T)$ be a minimal t.d.s. and $\tau: (Y',T)\rightarrow (X_{eq},T)$ be the factor
to the its maximal equicontinuous factor. If $\tau$ is almost one to one, and ${\rm IT}_l(Y')\setminus \D^{(l)}=\emptyset$
for some $l\ge 2$, then $|M^{erg}_T(Y')|\le l-1$.

\end{prop}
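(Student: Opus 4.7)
The plan is to argue by contradiction: assume that there exist $l$ distinct ergodic measures $\mu_1,\ldots,\mu_l$ on $(Y',T)$, and construct from them an $l$-tuple in ${\rm IT}_l(Y',T)\setminus \Delta^{(l)}(Y')$, contradicting the hypothesis.

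First I would use mutual singularity of the $\mu_i$ together with inner regularity and normality to produce pairwise disjoint open sets $U_1,\ldots,U_l\subset Y'$ with $\overline{U_i}$ still pairwise disjoint and $\mu_i(U_i)>1-\epsilon$, for a small $\epsilon>0$ to be fixed later. Next I would bring in the Ellis group $H=E(X_{eq},T)$, which by Theorem~\ref{Thm-Ellis} is a compact Hausdorff group in which $T$ embeds densely and satisfies $X_{eq}\cong H/\Gamma$; for clarity I would describe the plan in the abelian case $X_{eq}=H$, the general case being handled by pulling every set back through the quotient $q:H\to X_{eq}$. The unique invariant measure $\Theta_H$ on $X_{eq}$ coincides with $\tau_*\mu_i$ for every $i$.

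The core is to construct compact sets $V_1,\ldots,V_l\subset X_{eq}$ satisfying conditions (i)--(iii) of Proposition~\ref{independent sets}. I would put $B_i=\{h\in X_{eq}:\tau^{-1}(h)\subset U_i\}$, open by upper semicontinuity of $\tau^{-1}$ and pairwise disjoint since the $U_i$ are, and set $V_i=\overline{B_i}$. Condition (i) is immediate from $\intt(V_i)\supset B_i$, and condition (ii) holds because $\partial B_j$ is nowhere dense, so any nonempty open subset of $\intt(V_i)\cap \intt(V_j)$ would be trapped inside $\partial B_j$. The main obstacle will be condition (iii), $\Theta_H(\bigcap V_i)>0$. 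Letting $X_{eq}^0=\{h:|\tau^{-1}(h)|=1\}$ and $X_{eq}^{>1}=X_{eq}\setminus X_{eq}^0$, I would first rule out $\Theta_H(X_{eq}^0)=1$: in that case each $\mu_i$ would be determined by its pushforward on the singleton-fiber part, forcing $\mu_1=\cdots=\mu_l$ and already giving a contradiction. Otherwise the disjointness $\sum_i\Theta_H(B_i)\leq 1$ combined with the estimate $\Theta_H(B_i)\geq \Theta_H(X_{eq}^0)-\epsilon$ (obtained from $U_i\cap\tau^{-1}(X_{eq}^0)\subset\tau^{-1}(B_i)$) yields $\Theta_H(X_{eq}^{>1})\geq(l-1)/l-\epsilon>0$. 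Now I would exploit minimality: the closed $T$-invariant set $\overline{\{(h,y_h):h\in X_{eq}^0\}}\subset X_{eq}\times Y'$ projects onto $Y'$ by minimality, so every $y\in \tau^{-1}(h)$ is a limit of singleton-fiber points $y_{h_n}$ with $h_n\in X_{eq}^0\to h$. Applied to points of $\tau^{-1}(h)\cap U_i$ for $h\in X_{eq}^{>1}\cap \bigcap_i\tau(U_i)$, this forces such $h$ into $\bigcap_i V_i$, and a Fr\'echet inequality together with $\Theta_H(\tau(U_i))\geq \mu_i(U_i)>1-\epsilon$ gives $\Theta_H(\bigcap_i V_i)\geq \Theta_H(X_{eq}^{>1})-l\epsilon>0$ once $\epsilon$ is small enough.

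Finally I would invoke Proposition~\ref{independent sets} with $\mathcal{G}=X_{eq}^0\setminus \bigcup_{t\in T}t^{-1}(\bigcup_i \partial B_i)$, which is residual because each $\partial B_i$ is nowhere dense and $T$ is countable. This yields an infinite $I\subset T$ such that for every coloring $a\in\{1,\ldots,l\}^I$ some $h\in \mathcal{G}$ satisfies $th\in \intt(V_{a_t})$ for all $t\in I$. For such $h$ the conditions $th\in \overline{B_{a_t}}$ and $th\notin \partial B_{a_t}$ force $th\in B_{a_t}$; combined with $th\in X_{eq}^0$ this says the unique fiber representative $ty_h$ lies in $U_{a_t}$, so $y_h\in \bigcap_{t\in I}t^{-1}U_{a_t}$ and $I$ is an independence set for $(U_1,\ldots,U_l)$. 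Proposition~\ref{factor and IT tuple}(1) then produces an IT-tuple in $\overline{U_1}\times\cdots\times\overline{U_l}$, whose coordinates are pairwise distinct by the disjointness of the closures---the desired element of ${\rm IT}_l(Y',T)\setminus \Delta^{(l)}(Y')$, closing the contradiction.
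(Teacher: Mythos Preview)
Your proposal is correct but takes a genuinely different route from the paper. The paper first stratifies $X_{eq}$ by fiber cardinality, uses generic points of the $\mu_i$ to show the $m$-a.e.\ fiber has at least $l$ points, and then applies Lusin's theorem to find a continuity point $y_0$ of $\tau^{-1}$ (inside a positive-measure compact set) with $|\tau^{-1}(y_0)|\ge l$; the sets fed into Proposition~\ref{independent sets} are the $\tau$-images of small balls around $l$ chosen points $\xi_1,\ldots,\xi_l$ of that \emph{single} fiber, and condition~(iii) follows immediately from continuity of $\tau^{-1}$ on the Lusin set. You instead begin from disjoint open sets $U_i\subset Y'$ separating the measures, put $B_i=\{h:\tau^{-1}(h)\subset U_i\}$ and $V_i=\overline{B_i}$, and obtain condition~(iii) from the density of injectivity points in $Y'$ (which yields $\tau(U_i)\subset V_i$) together with a Fr\'echet bound. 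The paper's construction is more geometric --- it pins down the eventual IT-tuple on a specific fiber --- whereas yours is global and measure-theoretic, trading the Lusin argument for a more delicate residual set $\mathcal{G}$ (you must strip out $\bigcup_{t\in T} t^{-1}\partial B_i$ to pass from $th\in\intt(V_{a_t})$ to $th\in B_{a_t}$). One simplification: your detour through $X_{eq}^{>1}$ is unnecessary, since the inclusion $\tau(U_i)\subset V_i$ holds for every $h$ and already gives $\Theta_H\bigl(\bigcap_i V_i\bigr)\ge\Theta_H\bigl(\bigcap_i\tau(U_i)\bigr)\ge 1-l\epsilon$ directly.
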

\begin{proof}
By Theorem \ref{Thm-Ellis}, let $Y=X_{eq}=G/\Gamma$, where $G=E(X_{eq})$ is the Ellis semigroup and
$\Gamma$ is a closed subgroup of $E(X_{eq})$. Since $(X_{eq},T)$ is equicontinuous, $G=E(X_{eq})$ is a compact Hausdorff group.
Let the left Haar probability measure of $G$ be $\Theta_G$, and the probability measure $m$ induced by $\Theta_G$ is
the unique $T$-invariant probability measure of $(Y,T)$. Let $\phi: G\rightarrow Y=G/\Gamma$ be the factor map.
Then $\phi$ is open and $m=\phi_*(\Theta_G)=\Theta_G\circ \phi^{-1}$.
$$
\xymatrix{
  Y' \ar[d]_{\tau}  & (G,\Theta_G)  \ar[dl]_{\phi}     \\
  (Y=G/\Gamma,m) }
$$

Set for $k\in \N$
$$X_k=\{x\in Y': |\tau^{-1}(\tau(x))|=k\}, \quad Y_k=\tau(X_k);$$
$$X_\infty=\{x\in Y': |\tau^{-1}(\tau(x))|=\infty\}, \quad Y_\infty=\tau(X_\infty).$$
By definition, $\{X_k\}_{k\in \N\cup\{\infty\}}$ is a disjoint family of $X_{eq}$ and $\{Y_k\}_{k\in \N\cup\{\infty\}}$
is a disjoint family of $Y$. We remark that $Y_1$ is a dense $G_\delta$-set as $\tau$ is almost one to one.

\medskip

Let $2^{Y'}$ be the hyperspace of $Y'$ with Haudorff metric $d_H$. Consider the map
$$F=\tau^{-1}: Y\rightarrow 2^{Y'}, \quad y\mapsto \tau^{-1}(\{y\}).$$
Then $F$ is upper semi-continuous and hence Borel measurable. And define
$$G: 2^{Y'}\rightarrow \N\cup\{0,\infty\},\quad  A\mapsto |A|.$$
Recall that the Hausdorff  metric on $2^{Y'}$ is defined as follows:
$$d_H(A,B)=\sup_{y\in A} \inf_{y'\in B}d_{Y'}(y,y')+\sup_{y'\in B}\inf_{y\in A} d_{Y'}(y,y').$$
Let $A_0\in 2^{Y'}$. If $|A_0|=M$ for some finite $M\in \mathbb{N}$, then we may put $A_0=\{y_1,\ldots,y_M\}$.
Let $\displaystyle\epsilon_0=\min_{1\leq i<j\leq M}d_{Y'}(y_i,y_j)$. Notice that if $d_H(A_0,A)<\frac{\epsilon_0}{2}$,
then $|A|\geq M$.
If $|A_0|=\infty$, then for arbitrary $M'$, one can choose $\{y'_1,\ldots,y'_{M'}\}\subset A_0$.
Similarly find $\displaystyle\epsilon_{M'}=\min_{1\leq i<j\leq M'}d_{Y'}(y'_i,y'_j)$, then
$|A|\geq M'$ if $d_H(A_0,A)<\frac{\epsilon_{M'}}{2}$.  Equivalently, this can be expressed as
$$\displaystyle \liminf_{A\to A_0}G(A)\geq G(A_0).$$
That is, $G$ is lower semi-continuous. It follows that $G\circ F: Y\rightarrow \N\cup\{0,\infty\}$
is Borel measurable. Note that $Y_k={(G\circ F)}^{-1}(k)$, $Y_k$ is Borel measurable for each
$k\in \N\cup\{0,\infty\}$. And hence $X_k=\tau^{-1}(Y_k)$ is Borel measurable for each
$k\in \N\cup\{0,\infty\}$. Thus $\{X_k\}_{k\in \N\cup\{\infty\}}$ and $\{Y_k\}_{k\in \N\cup\{\infty\}}$ are measurable and $T$-invariant.

\medskip

Since  $\{Y_k\}_{k\in \N\cup\{\infty\}}$ are disjoint measurable $T$-invariants and $m$ is
ergodic, there is only one $k_0 \in \N \cup \{ \infty \}$ such that $m(Y_{k_0})=1$ and $m(Y_k)=0$ for all $k\neq k_0$.

\medskip

Now we show that $(Y',T)$ has at most $l-1$ ergodic measures. If not, assume that $\mu_1,\mu_2,\ldots,\mu_l$
be $l$ distinct ergodic measures of $(Y',T)$. Let $W_i$ be the set of
$\mu_i$-generic points for $i\in \{1,2,\ldots, l\}$. By Lusin theorem, $\{\tau(W_i)\}_{i=1}^l$ are
universally measurable as they are analytic sets. Since $(Y,T)$ is uniquely ergodic,
one has that $$\tau_*(\mu_1)=\ldots=\tau_*(\mu_l)=m.$$ It follows that
$m(\tau(W_1))=m(\tau(W_2))=\ldots =m(\tau(W_l))=1$ and hence
$$m(\bigcap_{i=1}^l\tau(W_i)\cap Y_{k_0})=1.$$ In particular,
$\bigcap_{i=1}^l\tau(W_i)\cap Y_{k_0}\neq \emptyset$. Let $z\in \bigcap_{i=1}^l\tau(W_i)\cap Y_{k_0}$.
Thus $|\tau^{-1}(z)|\ge l$ and we have that $k_0\ge l$.

\medskip

Recall $2^{Y'}=\{A\subset Y': A \ \text{compact and non-empty}\}$ and $F: Y\rightarrow 2^{Y'}, y\mapsto \tau^{-1}(\{y\}).$
Then $F$ is upper semi-continuous and hence measurable. By Lusin's Theorem, there is some compact
set $K\subseteq Y_{k_0}$ such that $m(K)>0$ and $F|_{K}: K\rightarrow 2^{Y'}$ is continuous.
Let $m|_K$ be the measure restricted on $K$ of $m$. Since $m(Y_{k_0})=1$, one has that
$$K\cap Y_{k_0}\cap {\rm supp} (m|_K)\neq \emptyset.$$
Let $y_0\in K\cap Y_{k_0}\cap {\rm supp} (m|_K)$. Then $|\tau^{-1}(y_0)|=k_0\ge l$ and $m(V\cap K)>0$
for any neighbourhood $V$ of $y_0$.

\medskip

Choose distinct elements $\xi_1,\xi_2,\ldots,\xi_l\in \tau^{-1}(y_0)$ and let
$\ep=\frac 14 \min_{1\le i\neq j\le l}d(\xi_i,\xi_j)$. Let $U_i=\overline{B_\ep(\xi_i)}$
for $1\le i\le l$. Then $U_i$ for $1\le i\le l$ is proper, i.e,  $U_i$ is  a compact subset with $\overline{int(U_i)} = U_i$.
We will show that $U_1,U_2,\ldots,U_l$ is an infinite independent tuple of
$(Y,T)$, i.e. there is some infinite set $I\subseteq T$ such that
$$\bigcap_{t\in I}t^{-1}U_{a_t}\neq \emptyset, \  \text{for all} \  a\in \{1,2,\ldots,l\}^I.$$
Let $V_i'=\tau(U_i)$ for $1\le i\le l$. By Lemma \ref{lem:proper one to one}, $V_i'$ is proper
for each $i\in \{1,2,\ldots,l\}$, i.e. $\overline{{\rm int}(V_i') }=V_i'$.

We claim that ${\rm int }(V_i')\cap {\rm int}(V_j')=\emptyset$ for all $1\le i\neq i\le l$.
In fact, if there is some  $1\le i\neq j\le l$ such that
${\rm int }(V_i')\cap {\rm int}(V_j')\not=\emptyset$, then
$${\rm int }(V_i')\cap {\rm int}(V_j')\cap Y_1\not=\emptyset,$$
as $Y_1$ is a dense $G_\d$ set. Let $y'\in {\rm int }(V_i')\cap {\rm int}(V_j')\cap Y_1$.
Then there are $x_i\in U_i$ and $x_j\in U_j$ such that $y'=\tau(x_i)=\tau(x_j)$, which contradict with $y'\in Y_1$.

\medskip

Since $F$ is continuous on $K$, one can choose $\d>0$ such that for any $y\in B_\d(y_0)\cap K$
one has that $d_H(F(y),F(y_0))<\ep$. By the definition of Hausdorff metric, the fibre $F(y)=\tau^{-1}(y)$ intersects all $U_1,\ldots, U_l$, so that
$$y\in \bigcap_{i=1}^l V_i'.$$
Therefore, $B_\d(y_0)\cap K\subseteq \bigcap_{i=1}^l V_i'$ and hence
$$m(\bigcap_{i=1}^l V_i')\ge m(B_\d(y_0)\cap K)>0.$$
Set $V_i=\phi^{-1}(V_i')$ for all $1\le i\le l$. Since $\phi$ is open, $V_1,\ldots,V_l$ are proper,
$$\text{int}(V_i)\cap \text{int}(V_j)=\phi^{-1}(\text{int}(V_i'))\cap \phi^{-1}(\text{int}(V_j'))=\emptyset$$
for $1\le i\neq j\le l$
 and
$$\Theta_G(\bigcap_{i=1}^l V_i)=m(\bigcap_{i=1}^l V_i')>0.$$
Let $\G=\phi^{-1}(Y_1)$. Then $\G$ is also residual. By Proposition \ref{independent sets},
there is an infinite $I\subseteq T$ such that for all $a\in\{1,2,\ldots,k\}^{I}$ there exists $h\in\mathcal{G}$
with the property that
\begin{equation*}
h\in \bigcap_{t\in I} t^{-1} {\rm int}(V_{a_{t}}).
\end{equation*}
Hence
\begin{equation*}
\phi(h)\in \bigcap_{t\in I} t^{-1} {\rm int}(V'_{a_{t}}).
\end{equation*}
Note that $\phi(h)\in Y_1$, $|\tau^{-1}(\phi(h))|=1$. Set $\tau^{-1}(\phi(h))=\{x_0\}$. Then
\begin{equation*}
x_0\in \bigcap_{t\in I} t^{-1} U_{a_{t}}.
\end{equation*}
That is, $(\xi_1,\xi_2,\ldots,\xi_l)$ is a $l$-IT-tuple over $\tau$, a contradiction! The proof is completed.
\end{proof}

{\bf Now we are ready to show Theorem \ref{main-result-form1}.}

\begin{proof}[Proof of Theorem \ref{main-result-form1}]
Assume that $\pi: (X,T)\rightarrow (X_{eq},T)$ is almost $N$ to one, and $X$ is
bounded non-tame, i.e. ${\rm IT}_k\setminus \D^{(k)}=\emptyset$ for some $k\ge 2$. Let $Y=X_{eq}$.
We will divide the proof into the following steps.

\medskip
\noindent{\bf Step 1: Lift $\pi$ to an open $N$ to one map through almost one to one extensions.}

\medskip

Recall that $(2^{X},T)$ is the hyperspace system of $(X,T)$, which is defined by
$$T\times2^X \rightarrow 2^X:\ (t, A)\mapsto tA=\{ta: a\in A\}, \ t\in T,\ A\in 2^X.$$
Let $M_N(X)=\{A\in 2^X: |A|\le N\}$. Then it is easy to see that $M_N(X)$ is a $T$-invariant and
closed subset of $2^X$, and hence $(M_N(X),T)$ is a subsystem of $(2^X,T)$.

Let $Y_c$ be the set of continuous points of $\pi^{-1}: Y\rightarrow 2^X$.
By Proposition \ref{almost N-1}, for each point $y\in Y_c$ one has that $\pi^{-1}(y)$ is a minimal point of $(2^X, T)$ and $|\pi^{-1}(y)|=N$. Let
$$Y'=\overline{\{\pi^{-1}(y):y\in Y_c\}}.$$
By Subsection \ref{subsec-finite-one}, $(Y',T)$ is minimal and by Corollary \ref{cor-f} $|A|=N$
for all $A\in Y'$. Hence $(Y',T)$ is a minimal subsystem of $(M_N(X),T)$. Note that
for each $A\in Y'$, there is some $y\in Y$ such that $A\subseteq \pi^{-1}(y)$, and hence $A\mapsto y$ define
$\tau: Y'\rightarrow Y$ such that for all $y\in Y_c$, $\tau(\pi^{-1}(y))=y$. Since $\pi^{-1}$
is continuous at points of $Y_c$, $\tau^{-1}(y)=\{\pi^{-1}(y)\}$ for all $y\in Y_c$, i.e. $\tau$ is almost one to one.

If $A=\{x_1,\ldots,x_N\}\in Y'$, it is easy to verify that
$$\tau:Y'\rightarrow Y: A=\{x_1,\ldots,x_N\}\rightarrow \pi(x_1).$$

Let
$$X'=\{(x,A)\in X\times Y': x\in A\}.$$
Then $(X',T)$ is a subsystem of $(X\times Y', T)$. Let $\sigma$ and $\pi'$ be the projections:
$$\sigma : X'\rightarrow X, (x,A)\mapsto x; \ \text{and}\ \pi': X'\rightarrow Y', (x,A)\mapsto A. $$
For each $y\in Y_c$ and $x\in \pi^{-1}(y)$, $\sigma^{-1}(x)=\{(x, \pi^{-1}(y))\}$, i.e. $\sigma$
is an almost one-to-one extension. Notice that for $A=\{x_1,\ldots,x_N\}\in Y'$,
$${\pi'}^{-1}(A)=\{(x,A): x\in A\}=\{(x_i, \{x_1,\ldots,x_N\}): 1\le i\le N\}.$$
It follows that $\pi':(X',T)\rightarrow (Y',T)$ is an open $N$ to one extension.

To sum up, we have the following diagram:
\begin{equation*}
  \xymatrix{
  X \ar[d]_{\pi}
                & X' \ar[d]^{\pi'} \ar[l]_{\sigma} \\
  Y
                & Y'   \ar[l]_{\tau}          }
\end{equation*}
where $\sigma$ and $\tau$ are almost one-to-one extensions, $\pi'$ is an open $N$ to one extension.
\footnote{See \cite{V70} for the construction of the general case.}

\medskip
\noindent {\bf Step 2: The length of any IT-tuple for $Y'$ is not more than $(k-1)^N$.}

\medskip

Let  $(X^N,T)$ be the product system. Define
$$p:X^N\rightarrow M_N(X): (x_1,\ldots,x_N)\mapsto \{x_1,\ldots,x_N\},$$
which is a continuous map.
We also have the following commuting diagram

\begin{equation*}
  \xymatrix{
  X^N \ar[d]_{p} \ar[r]^{T}
                 & X^N \ar[d]^{p} \\
  M_N(X) \ar[r]^{T}
                   & M_N(X)  }
\end{equation*}

Since there is some integer $k\ge 2 $ such that $(X,T)$ has no $k$-IT-tuples, we claim that 
there is some $l\leq (k-1)^N$ such that ${\rm IT}_{l+1}(Y',T)\setminus \triangle^{(l+1)}(Y')=\emptyset$,
i.e. $(Y',T)$ has no $l+1$ IT-tuples. 

Indeed, suppose that there is
$l>(k-1)^N$ and $(y'_1,\ldots,y'_l)\in {\rm IT}_l(Y',T)\setminus \triangle^{(l)}(Y')$. Then 
there exists $y\in Y$ such that $\tau(y'_j)=y$ for $1\leq j\leq l$.
From Proposition \ref{factor and IT tuple} (4), there exists an $l$-IT-tuple $(c_1,\ldots,c_l)\in (X^N)^l$
such that $p(c_j)=y'_j$ for $1\leq j\leq l$. Suppose $c_j=(x_i^{(j)})_{1\leq i\leq N}$.
Notice that $\pi(x_i^{(j)})=\tau(y'_j)=y$ for any $1\le i\le N,1\le j\le l$.

As $l>(k-1)^N$, there exists some $1\le i\le N$ such that $|\{x_i^{(j)}:1\leq j\leq l\}|\geq k$.
Thus one can choose a $k$-tuple $(c_{n_1},\ldots,c_{n_{k}})$ such that $x_i^{(n_r)}\neq x_i^{(n_s)}$ for
$1\leq r<s\leq k$. For each $1\le i\le N$, let $p_i:X^N\rightarrow X$ be the projection to the $i$-th coordinate.
It is clear that $p_i$ is a factor map, which implies that $(x_i^{(n_1)},\ldots, x_i^{(n_{k})})$ is a $k$ IT-tuple by
Proposition \ref{factor and IT tuple} (4) again. This is a contradiction, and thus the claim is proved.



\medskip
\noindent {\bf Step 3: Count the member of ergodic measures of $X$.}

\medskip

Thus if one can show that $Y'$ has finitely many ergodic measures, then by Lemma~ \ref{lem-finite-measures}
so does $X$. 
 In fact we will show that $|M^{erg}_T(Y')|\le l\le (k-1)^N$.
Hence by Lemma~ \ref{lem-finite-measures},
$$|M^{erg}_T(X)|\le |M^{erg}_T(X')| \le N|M^{erg}_T(Y')| \le Nl\le N(k-1)^N.$$

Now it is left to show that $|M^{erg}_T(Y')|\le l\le (k-1)^N$. Assume the contrary that $|M^{erg}_T(Y')|\ge (k-1)^N+1$.
Then by Proposition \ref{key-prop-numberofIT}, ${\rm IT}_{(k-1)^N+1}\setminus \D^{((k-1)^N+1)}\not=\emptyset$.
This contracts to the claim in {\bf Step 2}. So, $|M^{erg}_T(Y')|\le l\le (k-1)^N$, and the proof is completed.
\end{proof}


\subsection{A Corollary}\
\medskip

Corollary B
follows from the following result:
\begin{cor}\label{cor-finite-to-one-ex}
Let $(X,T)$ and $(Y,T)$ be  minimal systems with $T$ amenable. If $\pi: (X,T)\rightarrow (Y,T)$ is finite to one
and $(Y,T)$ has finitely many ergodic measures, then so does $(X,T)$.
\end{cor}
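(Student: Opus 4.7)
The plan is to reduce the problem to bounding, for each ergodic $\nu \in M^{erg}_T(Y)$, the cardinality of the ``fiber'' $\pi_*^{-1}(\nu)\cap M^{erg}_T(X)$ above $\nu$ under the pushforward map. Since every ergodic measure on $X$ pushes forward to an ergodic measure on $Y$, and $M^{erg}_T(Y)$ is finite by hypothesis, $M^{erg}_T(X)$ is the union of finitely many such fibers, and it suffices to show each fiber is finite.

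Fix $\nu \in M^{erg}_T(Y)$ and a tempered F\o lner sequence $\Phi$ in $T$. I will argue by contradiction: assume there is an infinite sequence of pairwise distinct ergodic $\mu_1, \mu_2, \ldots \in M^{erg}_T(X)$ with $\pi_*\mu_i = \nu$ for all $i$. For each $i$, let $W_i \subseteq X$ denote the set of points generic for $\mu_i$ along $\Phi$, so that by Corollary~\ref{Lem-H} we have $\mu_i(W_i)=1$. Since a generic point determines the measure it is generic for, $W_i\cap W_j=\emptyset$ for $i\ne j$. Exactly as in the proof of Lemma~\ref{lem-finite-measures}, each image $\pi(W_i)$ is an analytic subset of $Y$ and is therefore $\nu$-measurable with $\nu(\pi(W_i))\ge \mu_i(W_i)=1$.

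The new ingredient beyond Lemma~\ref{lem-finite-measures} is that we use not a single point of high multiplicity but an infinite intersection: since each $\pi(W_i)$ has full $\nu$-measure, countable subadditivity applied to the complements shows $\nu\!\left(\bigcap_{i\ge 1}\pi(W_i)\right)=1$, so in particular this intersection is non-empty. Pick a point $y$ in it; for every $i$ there exists $x_i\in W_i$ with $\pi(x_i)=y$, and these points are pairwise distinct because the $W_i$'s are pairwise disjoint. Hence $\pi^{-1}(y)$ is infinite, contradicting the hypothesis that $\pi$ is finite to one. This shows each fiber $\pi_*^{-1}(\nu)\cap M^{erg}_T(X)$ is finite, completing the argument.

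The main delicate point is measure-theoretic: neither $W_i$ nor $\pi(W_i)$ need be Borel, so one must invoke that analytic sets are universally measurable and that countable intersections of $\nu$-measurable sets remain $\nu$-measurable, which legitimizes the subadditivity bound on complements. No uniform bound on the fibers (as in the hypothesis of Lemma~\ref{lem-finite-measures}) is needed; only the topological finiteness of every fiber of $\pi$ is used, and only at the very last step, to extract the contradiction.
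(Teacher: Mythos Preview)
Your argument is correct. Both your proof and the paper's reduce to showing that for each ergodic $\nu\in M^{erg}_T(Y)$ the set $\pi_*^{-1}(\nu)\cap M^{erg}_T(X)$ is finite, and both use genericity along a tempered F{\o}lner sequence together with the analytic-set measurability of $\pi(W_i)$.

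The difference is in how the finiteness of each fiber is extracted. The paper first stratifies $Y$ by fiber cardinality, setting $Y_k=\{y\in Y:|\pi^{-1}(y)|=k\}$; since these sets are Borel, $T$-invariant and partition $Y$ (because $\pi$ is finite to one), ergodicity of $\nu$ forces $\nu(Y_{k_0})=1$ for a single $k_0\in\N$, after which the finite pigeonhole argument of Lemma~\ref{lem-finite-measures} applies verbatim with the uniform bound $N=k_0$. You bypass this reduction: by taking a \emph{countable} intersection $\bigcap_i \pi(W_i)$ of full-measure sets you locate a single $y$ whose fiber meets infinitely many disjoint $W_i$, directly contradicting $|\pi^{-1}(y)|<\infty$. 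Your route is slightly more economical in that it never needs the Borel measurability of the stratification $\{Y_k\}$ or an explicit value of $k_0$; the paper's route has the small advantage of yielding a concrete bound $|\pi_*^{-1}(\nu)\cap M^{erg}_T(X)|\le k_0$ for each $\nu$.
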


\begin{rem}
Note that if $\sup_{y\in Y}|\pi^{-1}(y)|<\infty$, then the result is obvious by
Lemma \ref{lem-finite-measures}. But there is some system, for all $y\in Y$,
$|\pi^{-1}(y)|<\infty$, but $\sup |\pi^{-1}(y)|=\infty$ (see \cite[Example 5.7.]{YZ08}). We need to take care of this case.
\end{rem}

\begin{proof}[Proof of Corollary \ref{cor-finite-to-one-ex}]
As in the proof of Theorem \ref{main-result-form1}, set for $k\in \N$
$$Y_k=\{y\in Y: |\pi^{-1}(y)|=k\};$$
$$Y_\infty=\{y\in Y: |\pi^{-1}(y)|=\infty\}.$$
Then $\{Y_k\}_{k\in \N\cup\{\infty\}}$ are disjoint measurable $T$-invariants sets.
Since $\pi$ is finite to one, $Y_\infty=\emptyset$ and $Y=\cup_{k\in \N} Y_k$.
For each ergodic measure $m$ on $Y$, there is only one $k_0 \in \N $ such that
$m(Y_{k_0})=1$ and $m(Y_k)=0$ for all $k\neq k_0$. Similar to the proof of
Lemma \ref{lem-finite-measures}, there are finitely many ergodic measures
in $\pi_*^{-1}(m)$. Since $(Y,T)$ has finitely many ergodic measures,
it follows that $(X,T)$ has only finitely many ergodic measures.
\end{proof}





\section{The structure of bounded systems}\label{sec-SequenceEntropy}



In this section we will prove Theorem C. 
First we will give the structure of bounded systems, that is:

\begin{thm}\label{Main-bd}
If $(X,T)$ is a bounded minimal system with $T$ abelian, then it is an almost finite to one extension of its maximal equicontinuous factor.
\end{thm}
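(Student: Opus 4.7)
The approach combines the decomposition theorem of Maass and Shao~\cite{MS} with the forthcoming Proposition~\ref{proximal-extension}, which asserts that a minimal system whose factor map onto its maximal equicontinuous factor is proximal but not almost one-to-one must have infinite sequence entropy. Since $(X,T)$ is bounded minimal and $T$ is abelian, \cite{MS} produces the commutative diagram
\[
\begin{CD}
X @<{\sigma'}<< X'\\
@VV{\pi}V      @VV{\pi'}V\\
X_{eq} @<{\tau' }<< Y'
\end{CD}
\]
of minimal systems in which $\sigma'$ and $\tau'$ are proximal extensions and $\pi'$ is an $N$-to-one extension for some $N\in\N$; boundedness of $(X,T)$ is exactly what makes $N$ finite.

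The key step is to show that $\tau'\colon Y'\to X_{eq}$ is almost one-to-one. Since $\tau'$ is proximal and $X_{eq}$ is equicontinuous, $X_{eq}$ is the maximal equicontinuous factor of the minimal system $(Y',T)$. If $\tau'$ were not almost one-to-one, then Proposition~\ref{proximal-extension} would force $h_\infty(Y',T)=\infty$. But $(Y',T)$ is a topological factor of $(X',T)$ via $\pi'$, so $h_\infty(Y',T)\le h_\infty(X',T)$; and since $(X',T)$ is a proximal extension of the bounded system $(X,T)$ and proximal extensions between minimal systems (sharing the same maximal equicontinuous factor) preserve the boundedness of sequence entropy---IN-tuples in a fiber of $\sigma'$ are proximal tuples whose minimal points are controlled by IN-tuples of $(X,T)$ via Theorem~\ref{MS-lemma}---we obtain $h_\infty(X',T)<\infty$. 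This contradicts $h_\infty(Y',T)=\infty$, and so $\tau'$ must be almost one-to-one.

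To conclude, let $V\subset X_{eq}$ be a residual set on which $\tau'^{-1}$ is single-valued, and for $y\in V$ write $\tau'^{-1}(y)=\{y'\}$. By commutativity of the diagram,
\[
{\sigma'}^{-1}\bigl(\pi^{-1}(y)\bigr)={\pi'}^{-1}\bigl({\tau'}^{-1}(y)\bigr)={\pi'}^{-1}(y'),
\]
which has exactly $N$ elements. Since $\sigma'$ is surjective, $|\pi^{-1}(y)|\le N$ for every $y\in V$, so $\pi\colon(X,T)\to(X_{eq},T)$ is almost finite-to-one, as desired.

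The main obstacle will be Proposition~\ref{proximal-extension} itself: producing, from a proximal but not almost one-to-one factor map onto an equicontinuous system, arbitrarily long finite independence sets so as to force $h_\infty=\infty$. The proof is expected to parallel the construction of Fuhrmann--Glasner--J\"ager--Oertel and to rely on the Haar-measure arguments of Proposition~\ref{independent sets}. A secondary technical point, used above to close the contradiction, is verifying that proximal extensions of minimal systems preserve boundedness of the sequence entropy in the abelian setting.
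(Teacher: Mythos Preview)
Your overall strategy---invoke the Maass--Shao diagram and then apply Proposition~\ref{proximal-extension}---is the same as the paper's, and your final step (deducing $|\pi^{-1}(y)|\le N$ on a residual set from $\tau'$ being almost one-to-one alone, via $\sigma'^{-1}(\pi^{-1}(y))=\pi'^{-1}(\tau'^{-1}(y))$) is correct and in fact slightly cleaner than what the paper writes.

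The gap is in how you argue that $\tau'$ is almost one-to-one. You need $h_\infty(X',T)<\infty$, and you obtain this from the assertion that proximal extensions between minimal systems preserve boundedness of sequence entropy. That assertion is \emph{false}: any Toeplitz subshift with positive topological entropy is an almost one-to-one (hence proximal) minimal extension of an odometer, so the base is null while the extension has $h_\infty=\infty$. Your sketched justification via Theorem~\ref{MS-lemma} does not rescue this: that theorem manufactures sequence-entropy tuples from minimal fiber tuples, it does not bound their length; and an IN-tuple lying in a fiber of $\sigma'$ is proximal, so the only minimal point one is guaranteed in its orbit closure is diagonal, which gives no contradiction.

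The paper avoids this trap by reversing the order and exploiting the concrete realization of $Y'$ in the Maass--Shao construction: $Y'$ sits inside $M_N(X)=\{A\in 2^X:|A|\le N\}$, which is a factor of $X^N$ via $(x_1,\ldots,x_N)\mapsto\{x_1,\ldots,x_N\}$. Hence $h_\infty(Y',T)\le h_\infty(X^N,T)=N\,h_\infty(X,T)<\infty$ by Lemma~\ref{goodman}, and Proposition~\ref{proximal-extension} then forces $\tau'$ to be almost one-to-one. Only \emph{afterwards} does the paper get $h_\infty(X',T)\le h_\infty(Y',T)+\log N<\infty$ from the $N$-to-one map $\pi'$ via Lemma~\ref{Goodman1}, and then conclude that $\sigma'$ is almost one-to-one as well. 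The order of the two deductions is what makes the argument go through.

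A side remark: the proof of Proposition~\ref{proximal-extension} in the paper does not use the FGJO/Haar-measure machinery of Proposition~\ref{independent sets} that you anticipate; it proceeds instead through a multi-sensitivity argument, constructing arbitrarily long separated tuples directly from the thickness of certain hitting-time sets.
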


\begin{rem}\label{N'N}
One may give a more precise version of Theorem \ref{Main-bd} as follows:
\medskip

{\em If $(X,T)$ is a minimal system with $T$ abelian and $h_\infty(X,T)=\log N$, then it is an almost $N'$ to one extension of its maximal equicontinuous factor, where $N'\le N$.}

\medskip

The fact $N'\le N$ follows from Theorem \ref{MS-lemma}. In fact, if $\pi: X\rightarrow X_{eq}$ is almost $N'$ to one, then by Proposition \ref{almost N-1}, there exists $y_0\in Y$ such that $|\pi^{-1}(y_0)|=N'$ and $\pi^{-1}(y_0)$ is a minimal point of $(2^X, T)$. Let $\pi^{-1}(y_0)=\{x_1,\ldots,x_{N'}\}$. Then by Theorem \ref{MS-lemma}, $(x_1,\ldots,x_{N'})\in SE_{N'}(X,T)$. Thus $\log N'\le h_\infty(X,T)=\log N$ by Lemma \ref{exist of se}, and hence $N'\le N$.

For the Morse minimal system $(X,T)$, $h_\infty(X,T)=\log 2$, and it is almost 2 to one extension of its maximal equicontinuous factor.
In general, one can not get that $N'=N$. For example, for the substitution minimal system $(X,T)$ in \cite[Proposition 5.1]{Go},
$h_\infty(X,T)=\log 2$, but it is an almost one to one extension of its maximal equicontinuous factor.
\end{rem}

We also remark that Theorem \ref{Main-bd} is in fact an improvement of a previous result
obtained by Maass and Shao in \cite{MS}.

\begin{prop}\cite{MS} \label{MS-main}
Let $(X,T)$ be a minimal system with $T$ abelian. If $(X,T)$ is bounded, then $(X,T)$ has the following
structure:
\[
\begin{CD}
X @<{\sigma'}<< X'\\
@VV{\pi}V      @VV{\pi'}V\\
Y=X_{eq} @<{\tau' }<< Y'
\end{CD}
\]
where $X_{eq}$ is the maximal equicontinuous factor of $X$, $\sigma'$ and $\tau'$ are proximal extensions, and $\pi'$ is a
finite to one equicontinuous extension.
\end{prop}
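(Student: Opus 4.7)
The plan is to combine the Maass--Shao structure theorem for bounded minimal systems with the key fact that a proximal but not almost one-to-one extension over $X_{eq}$ forces unbounded sequence entropy (Proposition \ref{proximal-extension}, to be proved later). Applying Proposition \ref{MS-main} to $(X,T)$ produces a commutative diagram
\[
\begin{CD}
X @<{\sigma'}<< X'\\
@VV{\pi}V      @VV{\pi'}V\\
X_{eq} @<{\tau'}<< Y'
\end{CD}
\]
in which $\sigma'$ and $\tau'$ are proximal and $\pi'$ is a finite-to-one equicontinuous extension. By Lemma \ref{finite-to-one}, $\pi'$ has constant fiber cardinality, say $N$.

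The heart of the argument is showing that $\tau':Y'\to X_{eq}$ is almost one-to-one. First observe that $X_{eq}$ is already the maximal equicontinuous factor of $Y'$: any intermediate equicontinuous factor $Y'\to Z\to X_{eq}$ would make $Z\to X_{eq}$ simultaneously proximal (as a quotient of the proximal $\tau'$) and distal (between equicontinuous systems), hence an isomorphism. Now suppose for contradiction that $\tau'$ fails to be almost one-to-one. Proposition \ref{proximal-extension} then yields $h_\infty(Y',T)=\infty$. On the other hand, $Y'$ is a factor of $X'$, so $h_\infty(Y',T)\le h_\infty(X',T)$, and the proximal extension $\sigma':X'\to X$ of minimal systems preserves sequence entropy, giving $h_\infty(X',T)=h_\infty(X,T)<\infty$ by assumption. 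This contradiction forces $\tau'$ to be almost one-to-one.

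Finally, to conclude that $\pi$ is almost finite-to-one, apply Proposition \ref{almost N-1} to $\tau'$ to obtain a residual set $Y_0\subseteq X_{eq}$ on which $\tau'^{-1}(y)=\{y'\}$ is a singleton. For any $y\in Y_0$, the commutativity $\pi\circ\sigma'=\tau'\circ\pi'$ gives
\[
\sigma'^{-1}(\pi^{-1}(y))=(\pi\circ\sigma')^{-1}(y)=(\tau'\circ\pi')^{-1}(y)=(\pi')^{-1}(y'),
\]
a set of exactly $N$ elements. Writing $\sigma'^{-1}(\pi^{-1}(y))=\bigsqcup_{x\in\pi^{-1}(y)}\sigma'^{-1}(x)$ with nonempty summands forces $|\pi^{-1}(y)|\le N$, so $\pi^{-1}(y)$ is finite for every $y\in Y_0$. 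Hence $\pi$ is almost finite-to-one, as required.

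The principal obstacle is the invocation of Proposition \ref{proximal-extension}, whose proof is deferred to the next subsection; a subsidiary but standard point is that proximal extensions between minimal systems preserve sequence entropy, which can be verified by tracking $SE_n$-tuples through Proposition \ref{prop:basicseq}(3) together with the fact that a proximal pair cannot meaningfully be separated by a factor map to a distinct codomain.
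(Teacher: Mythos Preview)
You have confused the target statement. Proposition~\ref{MS-main} is a result quoted from \cite{MS}; the paper does not prove it, and your very first step (``Applying Proposition~\ref{MS-main} to $(X,T)$ produces a commutative diagram\ldots'') invokes it as a black box. What you have actually written is an attempted proof of Theorem~\ref{Main-bd}, which the paper does prove.

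Even viewed as a proof of Theorem~\ref{Main-bd}, there is a real gap. You assert that a proximal extension $\sigma':X'\to X$ of minimal systems preserves sequence entropy, so that $h_\infty(X',T)=h_\infty(X,T)$. This is false in general, and in fact is exactly the phenomenon Proposition~\ref{proximal-extension} rules out \emph{only after} one knows the extension is almost one-to-one. For a concrete counterexample, take $X$ equicontinuous and $X'$ any minimal proximal but not almost one-to-one extension of $X$; then $h_\infty(X)=0$ while Proposition~\ref{proximal-extension} gives $h_\infty(X')=\infty$. Your suggested justification via Proposition~\ref{prop:basicseq}(3) breaks down because a non-diagonal $SE_n$-tuple in $X'$ may project to a tuple in $X$ with repeated coordinates: if $\sigma'(x_i')=\sigma'(x_j')$ then $(x_i',x_j')$ is proximal, but proximal pairs can perfectly well be $SE$-pairs (indeed they must be, in the counterexample just given). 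So one cannot bound $h_\infty(X')$ by $h_\infty(X)$ before knowing $\sigma'$ is almost one-to-one, and your argument is circular at this point.

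The paper circumvents this by bounding $h_\infty(Y')$ through a different channel: by construction $Y'\subseteq M_N(X)\subseteq 2^X$, and the symmetrisation map $p:X^N\to M_N(X)$ is a factor map, so
\[
h_\infty(Y',T)\le h_\infty(M_N(X),T)\le h_\infty(X^N,T)=N\,h_\infty(X,T)<\infty
\]
by Lemma~\ref{goodman}. This forces $\tau'$ to be almost one-to-one via Proposition~\ref{proximal-extension}; then Lemma~\ref{Goodman1} gives $h_\infty(X')\le h_\infty(Y')+\log N<\infty$, and a second application of Proposition~\ref{proximal-extension} makes $\sigma'$ almost one-to-one as well. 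Your fibre-counting argument at the end is fine but unnecessary once both $\tau'$ and $\sigma'$ are known to be almost one-to-one.
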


\begin{rem}
In \cite{MS}, all results are stated under $\Z$-actions, and they hold for systems with $T$ abelian.
\end{rem}

Thus, to prove Theorem \ref{Main-bd}, we need to show $\sigma'$ and $\tau'$ in Proposition \ref{MS-main} are
actually almost one-to-one, and hence $\pi$ is almost finite to one. To do this, first we need some notions introduced in \cite{HKZ,Z}.

\medskip
Let $(X,T)$ be a t.d.s.. and let $U\subseteq X$ be a non-empty open subset, $\d>0$ and $r\in \N$ with $r\ge 2$. Set
\begin{equation}\label{}
  N(U,\d; r)=\{t\in T: \exists x_1, x_2, \ldots, x_r\in U \ \text{such that }\ \min_{1\le i\neq j\le r} d(tx_i, tx_j)>\d\}.
\end{equation}

\begin{de}
Let $(X,T)$ be a t.d.s.. and $r\in\N$ ($r\ge 2$). We say that $(X,T)$ is {\em multiple $r$-sensitive} if there is some $\d>0$ such that
for any $k\in\N$ and any finite non-empty open subsets $U_1, U_2,\ldots, U_k$ of $X$, $\bigcap_{i=1}^k N(U_i,\d;r)\neq \emptyset$.
\end{de}

The following proposition relates the multiple sensitivity and sequence entropy. 

\begin{prop}
Let $(X,T)$ be a t.d.s. with $T$ abelian, and let $r\in \N$ with $r\ge 2$. If $(X,T)$ is multiple $r$-sensitive,
then there is some sequence $A\subset T$ such that $h_A(X,T)\ge \log r$.


In particular, $h_\infty(X,T)\ge \log r$.

\end{prop}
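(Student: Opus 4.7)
The plan is to construct directly, from multiple $r$-sensitivity, a sequence $A = \{t_n\}_{n \ge 1} \subseteq T$ and a finite open cover $\mathcal{U}$ of $X$ such that $N(\bigvee_{i=1}^n t_i^{-1}\mathcal{U}) \ge r^n$ for every $n \ge 1$. Dividing by $n$ and taking $\limsup$ will then give $h_A(T, \mathcal{U}) \ge \log r$, hence $h_A(X, T) \ge \log r$, and in particular $h_\infty(X, T) \ge \log r$.

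Fix $\delta > 0$ as in the definition of multiple $r$-sensitivity, and by compactness of $X$ pick a finite open cover $\mathcal{U} = \{U_1, \ldots, U_N\}$ all of whose members have diameter less than $\delta/3$. I will build a tree of non-empty open sets $\{W_\sigma\}_{\sigma \in \{1, \ldots, r\}^{<\omega}}$ with $W_\emptyset = X$, together with times $t_1, t_2, \ldots \in T$, by induction on the level $n$. At stage $n$, apply multiple $r$-sensitivity to the finite family $\{W_\sigma : |\sigma| = n\}$ to pick a single $t_{n+1} \in \bigcap_{|\sigma| = n} N(W_\sigma, \delta; r)$. Inside each $W_\sigma$ one obtains $r$ witnesses $y_\sigma^1, \ldots, y_\sigma^r$ whose $t_{n+1}$-images are pairwise more than $\delta$ apart; since members of $\mathcal{U}$ have diameter less than $\delta$, these images necessarily lie in $r$ distinct cover elements $U_{j_1(\sigma)}, \ldots, U_{j_r(\sigma)}$. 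Set $W_{\sigma k} := W_\sigma \cap t_{n+1}^{-1} U_{j_k(\sigma)}$; this is non-empty (it contains $y_\sigma^k$) and open.

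For each $n$, choose any $z_\sigma \in W_\sigma$ for $\sigma \in \{1, \ldots, r\}^n$. If $\sigma \ne \sigma'$ first disagree at coordinate $m$, they share the same parent $W_{\sigma[:m-1]}$ and hence the same labeling $j_{\cdot}(\sigma[:m-1])$; the two relevant cover elements $U_{j_{\sigma_m}}$ and $U_{j_{\sigma'_m}}$ are therefore distinct, and the corresponding witnesses have $t_m$-images more than $\delta$ apart. A triangle inequality using $\mathrm{diam}(U_j) < \delta/3$ then yields $d(t_m z_\sigma, t_m z_{\sigma'}) > \delta/3$, which rules out $t_m z_\sigma$ and $t_m z_{\sigma'}$ being in any common member of $\mathcal{U}$. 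Thus $z_\sigma$ and $z_{\sigma'}$ lie in distinct atoms of $\bigvee_{i=1}^n t_i^{-1} \mathcal{U}$, and since each member of this join contains at most one of the $r^n$ points $\{z_\sigma\}$, any subcover has cardinality at least $r^n$.

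The main obstacle is that multiple $r$-sensitivity only provides, for each $W_\sigma$, $r$ spread-out witnesses whose cover-labels depend on $\sigma$, so a naive pigeonhole over labels would introduce a combinatorial loss at every level. The key observation that avoids any loss is to exploit separation only at the \emph{first} disagreement coordinate of two branches: there both branches inherit the same parental labeling, so the two relevant cover elements are automatically distinct without any global uniform labeling being required.
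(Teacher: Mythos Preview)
Your proof is correct and follows essentially the same strategy as the paper's: both build, by induction on the level, a tree of nested non-empty open sets $\{W_\sigma\}_{\sigma\in\{1,\ldots,r\}^{<\omega}}$ together with times $t_1,t_2,\ldots$, using multiple $r$-sensitivity at each stage so that siblings are separated at the new time; separation of arbitrary branches then comes from the first disagreement coordinate (the paper encodes this as the condition $\min_{s\ne s'}\max_i\mathrm{dist}(t_iV_s,t_iV_{s'})>\delta$), and one concludes $N(\bigvee_{i=1}^n t_i^{-1}\mathcal U)\ge r^n$. The only cosmetic difference is in how the children are carved out: the paper chooses fresh $\delta$-separated neighbourhoods around the images (working with a $2\delta$ sensitivity constant and a cover of mesh $<\delta$), whereas you recycle elements of a fixed cover of mesh $<\delta/3$ and recover the separation via the triangle inequality. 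Both devices serve the same purpose and neither introduces any loss.
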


\begin{proof}
The proof follows the arguments of the proof of in \cite[Theorem 1.4]{Z} for $\Z$-actions.
Since $(X,T)$ is multiple $r$-sensitive, there is some $\d>0$ such that $\bigcap_{i=1}^k N(U_i,2\d;r)\neq \emptyset$
for any finite non-empty open subsets $U_1, U_2,\ldots, U_k$ of $X$.
Let $\a$ be an open cover of $X$ with ${\rm diam} (\a)<\d$. We will show that there is some sequence $A\subset T$ such that $h_A(T,\a)\ge \log r$.

\medskip
\noindent {\bf Claim:} {\em There exist a sequence $\{t_n\}_{n\in \N}$ of $T$ and a set of non-empty open subsets
$\{V_s\}_{s\in \Omega}$ of $X$, where $\Omega=\bigcup_{i=1}^\infty \{1,2,\ldots, r\}^i$ such that
\begin{enumerate}
  \item if $t$ is a sub-word of $s$, then $V_s\subseteq V_t$;
  \item for each $m\in \N$ one has that
  \begin{equation*}
  \min_{s\neq s'\in \{1,2,\ldots,r\}^m }\max _{1\le i\le m} {\rm dist} \big( {t_i} V_s, {t_i} V_{s'} \big)>\d.
\end{equation*}
\end{enumerate}

}


\begin{proof}[Proof of Claim]
We prove the claim by induction. Since $(X,T)$ is multiple $r$-sensitive, there is some $t_1\in T$ and $x_1,x_2,\ldots, x_r\in X$ such that
$$\min_{1\le s_1\neq s_1'\le r } d({t_1} x_{s_1}, {t_1}x_{s_1'})>2\d.$$
Then we choose open neighborhood $U_{s_1}$ of ${t_1}x_{s_1}$ for all $s_1\in \{1,2,\ldots,r\}$ such that
\begin{equation*}
\min_{1\le s_1\neq s_1'\le r } {\rm dist} (U_{s_1}, U_{s_1'})>\d.
\end{equation*}
Set $V_{s_1}=t_{1}^{-1}U_{s_1}$ for $s_1\in \{1,2,\ldots,r\}$. Then we have that
\begin{equation*}
  \min_{s\neq s'\in \{1,2,\ldots,r\} }\max_{i=1} {\rm dist}\big( {t_i}(V_s), {t_i}(V_{s'})\big)>\d.
\end{equation*}
Thus we have our first step.

Now assume that there exist a sequence $t_1, t_2, \ldots, t_m$ of $T$ and a set of
non-empty open subsets $\{V_s\}_{s\in \Omega_m}$ of $X$, where $\Omega_m=\bigcup_{i=1}^m \{1,2,\ldots, r\}^i$ such that
\begin{enumerate}
    \item[$(1)_m$] if $t\in \Omega_m$ is a sub-word of $s\in \Omega_m$, then $V_s\subseteq V_t$;
    \item[$(2)_m$] for each $j\in \{1,2, \ldots,m\}$ one has that
\begin{equation*}
  \min_{s\neq s'\in \{1,2,\ldots,r\}^j }\max _{1\le i\le j} {\rm dist} \big( {t_i}V_s , {t_i}V_{s'}\big)>\d.
\end{equation*}
  \end{enumerate}
Since $(X,T)$ is multiple $r$-sensitive,
\begin{equation*}
  \bigcap_{s\in \{1,2,\ldots, r\}^m} N(V_s,2\d;r)\neq \emptyset.
\end{equation*}
Pick $t_{m+1}\in\displaystyle   \bigcap_{s\in \{1,2,\ldots, r\}^m} N(V_s,2\d;r)$. By the definition,
for each $s\in \{1,2,\ldots, r\}^m$, one can find $x_{(s,1)},x_{(s,2)},\ldots, x_{(s,r)}\in V_s$ such that
\begin{equation*}
  \min_{1\le s_{m+1}\neq s'_{m+1}\le r} d({t_{m+1}}x_{(s,s_{m+1})}, {t_{m+1}}x_{(s,s'_{m+1})})>2\d.
\end{equation*}
Then we choose a non-empty open neighbourhood $U_{(s,s_{m+1})}$ of ${t_{m+1}}x_{(s,s_{m+1})}$ for each
$(s,s_{m+1})\in \{1,2,\ldots,r\}^{m+1}$ with
\begin{equation*}
  \min_{1\le s_{m+1}\neq s'_{m+1}\le r} {\rm dist} (U_{(s,s_{m+1})}, U_{(s,s'_{m+1})})>\d.
\end{equation*}
Set $V_{(s,s_{m+1})}=V_s\cap t_{m+1}^{-1}U_{(s,s_{m+1})}$ for all $(s,s_{m+1})\in \{1,2,\ldots,r\}^{m+1}$.
Then it is easy to verify that
\begin{equation*}
  \min_{s\neq s'\in \{1,2,\ldots,r\}^{m+1} }\max _{1\le i\le m+1} {\rm dist} \big( {t_i}V_s, {t_i}V_{s'}\big)>\d.
\end{equation*}
The proof of Claim is completed.
\end{proof}

For any $m\in \N$ and any $ s, s'\in \{1,2,\ldots,r\}^m$, one has that
\begin{equation*}
  \begin{split}
  & \min_{s\neq s'\in \{1,2,\ldots,r\}^m }\max _{1\le i\le m} d \big( {t_i}(x_s), {t_i}(x_{s'})\big)\\ \ge & \min_{s\neq s'\in \{1,2,\ldots,r\}^m }
  \max _{1\le i\le m} {\rm dist} \big( {t_i}V_s, {t_i}V_{s'}\big)\\ >& \d .
  \end{split}
\end{equation*}
Since ${\rm diam} \a<\d$, $x_s$ and $x_{s'}$ will not be in the same element of $\bigvee_{i=1}^m t_i^{-1}\a$ whenever $s\neq s'\in \{1,2,\ldots,r\}^m$.
Thus, $N(\bigvee_{i=1}^m t_i^{-1}\a)\ge r^m$, which implies that $h_A(T,\a)\ge \log r$, where $A=\{t_i\}_{i=1}^\infty$. In particular, $h_\infty(X,T)\ge h_A(X,T)\ge \log r$.
\end{proof}

We also need the following lemmas to show the next proposition.

\begin{lem}\cite[Remark 8.]{C}\label{clay}
Let $\pi_i: (X_i,T)\rightarrow (Y_i,T)$ be proximal extensions for $i\in I$. Then $\prod_{i\in I}\pi_i: \prod_{i\in I}X_i\rightarrow \prod_{i\in I}Y_i$ is also proximal.
\end{lem}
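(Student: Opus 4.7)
The plan is to work in the Ellis semigroup $E(X,T)$ of the product system $X=\prod_{i\in I} X_i$, using the standard characterization: a pair $(x,x')$ is proximal iff there exists $p\in E(X,T)$ with $p\cdot x=p\cdot x'$. Given $(x,x')=((x_i),(x'_i))$ with $\pi_i(x_i)=\pi_i(x'_i)$ for every $i\in I$, my goal is to produce such a common proximalizer in $E(X,T)$.

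The coordinate projections $X\to X_i$ are $T$-equivariant and therefore induce continuous, surjective semigroup homomorphisms $\phi_i:E(X,T)\to E(X_i,T)$ with $(p\cdot x)_i=\phi_i(p)x_i$ for every $p$. Set
\[
F_i=\{p\in E(X,T):(p\cdot x)_i=(p\cdot x')_i\}.
\]
Each $F_i$ is closed in the compact space $E(X,T)$ by continuity of the evaluation map, and is a left ideal because $\phi_i$ is a homomorphism. The desired conclusion becomes $\bigcap_{i\in I}F_i\neq\emptyset$, so by compactness it suffices to verify the finite intersection property.

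For this I argue by induction. Given $p\in F_{i_1}\cap\cdots\cap F_{i_n}$, the $T$-equivariance of $\pi_{i_{n+1}}$ lifts to equivariance under $E(X_{i_{n+1}},T)$, so the pair $\bigl((p\cdot x)_{i_{n+1}},(p\cdot x')_{i_{n+1}}\bigr)$ lies in a single $\pi_{i_{n+1}}$-fiber. Proximality of $\pi_{i_{n+1}}$ then supplies $q\in E(X_{i_{n+1}},T)$ proximalizing that pair; lifting $q$ to $\tilde q\in E(X,T)$ via the surjectivity of $\phi_{i_{n+1}}$, the element $\tilde q p$ sits in $F_{i_{n+1}}$, while the left-ideal property keeps $\tilde q p$ inside each previously-obtained $F_{i_j}$. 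The only subtle point, rather than a true obstacle, is precisely this left-ideal property: it is the mechanism that ensures coincidences already achieved at earlier coordinates are preserved when one premultiplies by a new element to rectify a new coordinate. The argument uses $I$ only through compactness of $E(X,T)$ and therefore handles arbitrary, possibly uncountable, index sets.
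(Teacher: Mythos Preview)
Your argument is correct and is precisely the classical proof; the paper gives no proof of its own here, only citing Clay, whose argument is the same: the sets $F_i$ are nonempty closed left ideals of the compact semigroup $E(X,T)$, an induction exploiting the left-ideal property establishes the finite intersection property, and compactness yields $\bigcap_i F_i\neq\emptyset$.
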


\begin{lem}\cite[Lemma 6.17]{Au88}\label{Auslander}
Let $(X,T)$ be a topological system, let $x\in X$ and let $y$ be a minimal point, with $x$ and $y$ proximal. Let $U$ be a neighbourhood of $y$.
Then there is a $t\in T$ such that $tx,ty\in U$.
\end{lem}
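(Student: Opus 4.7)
The plan is to prove this classical lemma of Auslander by exploiting proximality of $(x,y)$ and minimality of $y$ simultaneously, via a two-step composition argument, either at the level of nets in $T$ or, equivalently, inside the Ellis semigroup $E(X,T)$. The underlying picture is: proximality lets us shrink the pair $(x,y)$ so that both coordinates approach a common auxiliary point $z$, minimality of $y$ lets us transport $z$ back into the prescribed neighbourhood $U$ of $y$, and composing the two motions produces a single group element that sends both $x$ and $y$ into $U$.

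More concretely, I would first invoke the definition of proximality to choose a net $\{s_\alpha\}\subset T$ with $d(s_\alpha x, s_\alpha y)\to 0$ and, after passing to a subnet using compactness of $X\times X$, arrange that $s_\alpha x\to z$ and $s_\alpha y\to z$ for a common point $z\in X$. Since $y$ is a minimal point, the orbit closure $\overline{Ty}$ is a minimal set that contains $z$, so $y\in\overline{Tz}$; therefore there exists a net $\{r_\beta\}\subset T$ with $r_\beta z\to y$. Given the neighbourhood $U$ of $y$, fix some $\beta$ with $r_\beta z\in U$; by continuity of the single homeomorphism $r_\beta:X\to X$ we then have $r_\beta s_\alpha x\to r_\beta z$ and $r_\beta s_\alpha y\to r_\beta z$, so that for $\alpha$ sufficiently large both $r_\beta s_\alpha x$ and $r_\beta s_\alpha y$ lie in $U$; setting $t=r_\beta s_\alpha\in T$ completes the argument.

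The same argument reads more cleanly inside $E=E(X,T)$: proximality of $(x,y)$ produces $p\in E$ with $px=py$, and minimality of $y$ together with $py\in\overline{Ty}$ produces $q\in E$ with $qpy=y$, so $qp\in E$ maps both $x$ and $y$ to $y$; approximating $qp$ in the pointwise topology by a net from $T$ then yields the required $t$. The only real subtlety is the order of the nested limits in the first version: one must fix $\beta$ first and then choose $\alpha$ depending on $\beta$, rather than trying to build a single diagonal net. The semigroup formulation hides this bookkeeping inside the fact that $E$ is closed under composition of pointwise limits, which is exactly the essential input.
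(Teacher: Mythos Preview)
Your argument is correct and is precisely the classical proof of Auslander's lemma. The paper does not supply its own proof of this statement; it merely cites it as \cite[Lemma 6.17]{Au88} and uses it as a black box, so there is nothing in the paper to compare against beyond noting that your two-step net argument (proximality to collapse the pair to a common limit $z$, then minimality of $y$ to transport $z$ back into $U$) is exactly the standard proof found in Auslander's book, and your Ellis semigroup reformulation is the equally standard alternative.
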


\begin{lem}\label{lem-dist}
Let $\pi: (X,T)\rightarrow (Y,T)$ be an extension between t.d.s. with $(Y,T)$ being minimal. If $\pi$ is not almost finite to one, then for each fixed $r\ge 2$, there exist a constant $\d_r>0$ such that for each $y\in Y$,
there are $x_1,\ldots, x_r\in \pi^{-1}(y)$ with $$\min_{1\le i\neq j\le r} d(x_i, x_j)>\d_r.$$
\end{lem}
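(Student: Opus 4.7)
The plan is to argue by reducing to the case that every fiber of $\pi$ is infinite, then to combine a Baire category argument with the minimality of $(Y,T)$ and a uniform continuity estimate for a finite list of translations in $T$. Since $\pi$ is not almost finite to one and $(Y,T)$ is minimal, Proposition~\ref{almost N-1} tells us that no fiber of $\pi$ is finite, so $|\pi^{-1}(y)|=\infty$ for every $y\in Y$.

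Form the fibered product $W_r=\{(x_1,\dots,x_r)\in X^r:\pi(x_1)=\cdots=\pi(x_r)\}$, a compact $T$-invariant subset of $X^r$ under the diagonal action, together with the continuous function $\phi(x_1,\dots,x_r)=\min_{i\neq j}d(x_i,x_j)$ on $X^r$. For each $\delta>0$ set $K_\delta=\{w\in W_r:\phi(w)\ge \delta\}$, which is compact, and put $Y_\delta=\pi^{(r)}(K_\delta)\subseteq Y$; each $Y_\delta$ is closed in $Y$ since $\pi^{(r)}$ is continuous and $K_\delta$ is compact. Because every fiber of $\pi$ is infinite, any $y\in Y$ admits $r$ distinct points in $\pi^{-1}(y)$, whose pairwise minimum distance is strictly positive, so $y\in Y_{1/n}$ for some $n$. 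Therefore $Y=\bigcup_{n\ge 1}Y_{1/n}$, and since $Y$ is a compact (hence Baire) metric space, some $Y_{\delta_0}$ has nonempty interior $U$.

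Now the minimality of $(Y,T)$ implies that $\bigcup_{t\in T}tU=Y$, and by compactness of $Y$ there exist $t_1,\dots,t_k\in T$ with $Y=\bigcup_{i=1}^k t_iU$. For each $i$, the map $t_i^{-1}:X\to X$ is uniformly continuous, so there exists $\eta_i>0$ with the property that $d(t_ix,t_ix')\ge \eta_i$ whenever $d(x,x')\ge \delta_0$. Set $\delta_r=\tfrac{1}{2}\min_{1\le i\le k}\eta_i>0$. Given any $y\in Y$, pick $i$ with $t_i^{-1}y\in U\subseteq Y_{\delta_0}$, choose $x_1',\dots,x_r'\in \pi^{-1}(t_i^{-1}y)$ with $\min_{j\neq j'}d(x_j',x_{j'}')\ge \delta_0$, and set $x_j=t_ix_j'\in \pi^{-1}(y)$. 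The uniform continuity estimate yields $\min_{j\neq j'}d(x_j,x_{j'})\ge \eta_i>\delta_r$, completing the argument.

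The only mildly delicate step is verifying that $Y_\delta$ is closed; this is immediate from compactness of $K_\delta$ and continuity of $\pi^{(r)}$. The essential content is the interplay between Baire category (producing \emph{some} open set of $Y$ on which fibers contain a $\delta_0$-separated $r$-tuple) and minimality plus uniform continuity (spreading this property uniformly over $Y$ while controlling the cost of applying the group element $t_i$).
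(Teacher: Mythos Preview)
Your proof is correct and follows the same overall architecture as the paper's: find a nonempty open set $U\subseteq Y$ over which every fiber contains a $\delta_0$-separated $r$-tuple, cover $Y$ by finitely many translates $t_iU$ using minimality, and then push the separation forward via uniform continuity of the finitely many maps $t_i$ (equivalently, their inverses).

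The only genuine difference is in how the open set $U$ is produced. The paper works in the hyperspace: it takes a continuity point $y_0$ of the u.s.c.\ map $\pi^{-1}:Y\to 2^X$ (Theorem~\ref{Kura}), picks $r$ separated points in $\pi^{-1}(y_0)$, and uses continuity of $\pi^{-1}$ at $y_0$ to ensure nearby fibers meet the same neighborhoods. You instead write $Y=\bigcup_n Y_{1/n}$ with each $Y_{1/n}=\pi^{(r)}(K_{1/n})$ closed and apply the Baire category theorem directly. Your route is a bit more elementary in that it avoids the hyperspace/semicontinuity machinery entirely; the paper's route has the mild advantage that it reuses tools already set up for the rest of the article. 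Both arguments are equally short and yield the same conclusion.
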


\begin{proof}
Fix $r\ge 2$ and let $\pi^{-1}: Y\rightarrow 2^X, y\mapsto \pi^{-1}(y)$.
Then $\pi^{-1}$ is a u.s.c. map, and by Theorem \ref{Kura}, the set $Y_c$ of
continuous points of $\pi^{-1}$ is a dense $G_\d$ subset of $Y$.
Let $y_0\in Y_c$ be a continuous point of $\pi^{-1}$.

Define $f: X^r\rightarrow \R$ as follows
$$(w_1,\ldots,w_r)\mapsto \min_{1\le i\neq j\le r} d(w_i, w_j).$$
It is easy to verify that $f$ is a continuous function.  Since $\pi$ is not almost finite to one and $(Y,T)$ is minimal, $\pi^{-1}(y_0)$ is an infinite set. Choose distinct points $z_1,z_2,\ldots, z_r\in \pi^{-1}(y_0)$. Then
$$f(z_1,\ldots,z_r)=\min_{1\le i\neq j\le r} d(z_i, z_j)>0.$$
Let $\d=\frac{1}{2} f(z_1,\ldots,z_r)$. By the continuity of $f$, there are open neighborhoods $U_1,\ldots,U_r$ of $z_1,\ldots,z_r$ such that for all $(z_1',\ldots,z_r')\in U_1\times \ldots \times U_r$,
$$|f(z'_1,\ldots,z'_r)-f(z_1,\ldots,z_r)|<\d.$$
In particular, we have that for all $(z_1',\ldots,z_r')\in U_1\times \ldots \times U_r$,
\begin{equation}\label{s2}
  \min_{1\le i\neq j\le r} d(z'_i, z'_j)=f(z'_1,\ldots,z'_r)>f(z_1,\ldots,z_r)-\d=\d.
\end{equation}

Since $y_0\in Y_c$ is a continuous point of $\pi^{-1}$, there is an open neighbourhood $V$ of $y_0$ such that for all $y'\in V$,
\begin{equation}\label{s-s2}
\pi^{-1}(y')\cap U_j\neq \emptyset, \ \forall 1\le j\le r.
\end{equation}

Since $(Y,T)$ is minimal, there exist $t_1,\cdots,t_k\in T$ such that $\bigcup_{s=1}^kt_sV=Y$.
By the continuity of $t_1,\cdots,t_k\in T$, there exists $\delta_r>0$ such that if $x,x'\in X$ with $d(x,x')\le \delta_r$, then
$$\max_{1\le s \le k} d(t_s^{-1}x,t_s^{-1}x')\le \delta.$$

Now for a given $y\in Y$, there is some $s(y)\in \{1,2,\cdots,k\}$ such that $t_{s(y)}^{-1}y\in V$. Then by \eqref{s-s2}, we can find  $x'_1\in \pi^{-1}(t_{s(y)}^{-1}y)\cap U_1,\ldots, x'_r\in \pi^{-1}(t_{s(y)}^{-1}y)\cap U_r$.
Moreover, by \eqref{s2} one has that
$$
\min_{1\le i\neq j\le r} d(x'_i, x'_j)> \d.
$$
Let $x_i=t_{s(y)}x'_i$ for $i=1,2,\cdots,r$.
Then $x_1,\cdots,x_r \in \pi^{-1}(y)$, and
by the choice of $\delta_r$, one has that $$
\min_{1\le i\neq j\le r} d(x_i, x_j)> \d_r.
$$
The proof is completed.
\end{proof}

For $T=\Z$, the following Proposition \ref{proximal-extension} is the consequence of \cite[Theorem 1.3]{Z} and \cite[Theorem 3.4]{Z}.
We will give a direct proof here for t.d.s. under abelian group actions. First recall some notions about subsets of $T$.

\medskip

A subset $S\subset T$ is {\em syndetic} if there exists a finite $F\subset T$ such that $FS=T$.
A subset $L\subset T$ is called {\em thick} if for every finite set $F\subset T$ one has that
$$L\cap \bigcap _{\gamma \in F}\gamma L\neq \emptyset.$$
Note that $S\subset T$ is syndetic if and only if $S\cap L\neq \emptyset$ for every thick set
$L\subset T$; and $L\subset T$ is thick if and only if $L\cap S\neq \emptyset$ for every syndetic set $S\subset T$.

\medskip

The following proposition is the key to improve Proposition \ref{MS-main}.
\begin{prop}\label{proximal-extension}
Let $\pi: (X,T)\rightarrow (Y,T)$ be an extension between minimal systems with $T$ abelian.
If $\pi$ is proximal but not almost one to one, then $(X,T)$ is multiple
$r$-sensitive for all $r\ge 2$. In particular, $h_\infty(X,T)=\infty$.
\end{prop}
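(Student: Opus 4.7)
The plan breaks into three stages. First, I would upgrade the hypothesis to the stronger statement that $\pi$ is not almost finite-to-one. If $\pi$ were almost $N$-to-one with $N\ge 2$, then Proposition~\ref{almost N-1} would produce $y_0\in Y$ with $A_0=\pi^{-1}(y_0)$ an $N$-element minimal point of $(2^X,T)$. Any minimal idempotent $u\in E(X,T)$ collapses a proximal fiber to a single point, so $uA_0=\{z\}$; but then $\{z\}\in\overline{TA_0}$, whereas the orbit closure of a singleton in $2^X$ consists only of singletons and cannot contain $A_0$ with $|A_0|\ge 2$. This contradicts the minimality of $A_0$, so $\pi$ is not almost finite-to-one, and Lemma~\ref{lem-dist} furnishes a constant $\delta_r>0$ such that every fiber $\pi^{-1}(y)$ contains $r$ points pairwise $\delta_r$-apart.

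For the main claim of multiple $r$-sensitivity (with constant $\delta_r$), fix non-empty open sets $U_1,\ldots,U_k\subseteq X$. The strategy is an Auslander--Ellis argument in the product $X^{kr}$. I would select $(v_1,\ldots,v_k)\in U_1\times\cdots\times U_k$ that is a minimal point of $(X^k,T)$; since $\pi$ is a non-trivial proximal extension onto an infinite minimal system $Y$ (with no isolated points), distal $k$-tuples are dense in $X^k$, and such tuples are minimal points, hence this selection is possible. Then some minimal idempotent $u\in E(X,T)$ fixes every $v_i$. Using Lemma~\ref{lem-dist} in each fiber $\pi^{-1}(\pi(v_i))$, pick $r$ points $w_{i,1},\ldots,w_{i,r}$ with pairwise distances exceeding $\delta_r$, and form in $X^{kr}$ the tuples
\[
W=(w_{1,1},\ldots,w_{1,r},w_{2,1},\ldots,w_{k,r}),\qquad W^{\flat}=(v_1,\ldots,v_1,v_2,\ldots,v_k,\ldots,v_k),
\]
with each $v_i$ appearing $r$ times in the $i$-th block of $W^\flat$. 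They project identically under $\pi^{kr}$, so by the product-proximality of Lemma~\ref{clay} they are proximal in $X^{kr}$; since $u$ acts diagonally, $uW=uW^\flat=W^\flat$ (using $uv_i=v_i$). Thus $W^\flat\in U_1^r\times\cdots\times U_k^r$ is a minimal point of $X^{kr}$ to which $W$ is proximal.

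Applying Auslander's Lemma~\ref{Auslander} to the proximal pair $(W,W^\flat)$, for every sufficiently small neighborhood $V\subseteq U_1^r\times\cdots\times U_k^r$ of $W^\flat$ there is $t\in T$ with $tW\in V$, i.e.\ $tw_{i,j}\in U_i$ for all $i$ and $j$. Setting $s=t^{-1}$, the $r$-tuple $\{tw_{i,1},\ldots,tw_{i,r}\}\subseteq U_i$ is mapped by $s$ back to the original $\delta_r$-separated $w_{i,j}$'s, establishing multiple $r$-sensitivity with constant $\delta_r$. The final assertion $h_\infty(X,T)=\infty$ follows from the preceding proposition, which yields $h_\infty(X,T)\ge\log r$ for every $r\ge 2$. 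The main obstacle is the density claim in the second stage: locating a minimal point of $X^k$ inside an arbitrary product $U_1\times\cdots\times U_k$. This reduces to showing that the proximal relation $R_\pi\subseteq X^2$ has empty interior, so that distal $k$-tuples (which are automatically minimal in $X^k$) are dense; this follows because nearby distinct points in $X$ project to nearby distinct points in $Y$, and for an infinite minimal base $Y$ without isolated points, such pairs are generically distal.
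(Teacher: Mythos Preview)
Your overall strategy is essentially the paper's: upgrade ``not almost one-to-one'' to ``not almost finite-to-one'', invoke Lemma~\ref{lem-dist} to get uniformly $r$-separated points in every fiber, then run a product-proximality/Auslander argument (Lemmas~\ref{clay} and~\ref{Auslander}) in a large power of $X$. The paper organizes this slightly differently---it first shows that $N(U,\delta;r)$ is \emph{thick} for every open $U$ and then deduces multiple $r$-sensitivity---whereas you attack $\bigcap_i N(U_i,\delta_r;r)\neq\emptyset$ directly. Both routes rest on the same fact: minimal points are dense in $X^k$ when $T$ is abelian and $(X,T)$ is minimal.

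That density claim is where your argument has a real gap. You assert that distal $k$-tuples are automatically minimal in $X^k$ and then try to show they are dense via ``$R_\pi$ has empty interior''. Neither step is valid. A pair $(x_1,x_2)$ being distal does \emph{not} imply it is a minimal point of $X^2$: minimality requires a single minimal idempotent $u\in E(X)$ with $ux_1=x_1$ and $ux_2=x_2$, while distality only says no $p\in E(X)$ collapses the pair; these are unrelated conditions. And ``$\pi(x)\neq\pi(x')$'' tells you nothing about distality in $X$ when $Y$ is an arbitrary minimal factor. The correct (and much simpler) argument, which the paper uses without comment, is this: pick $x_0\in X$; by minimality find $t_1,\ldots,t_k\in T$ with $t_ix_0\in U_i$. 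Since $T$ is abelian, the map $x\mapsto(t_1x,\ldots,t_kx)$ is $T$-equivariant, so the orbit closure of $(t_1x_0,\ldots,t_kx_0)$ in $X^k$ is a continuous equivariant image of the minimal system $X$ and hence minimal.

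Two minor points. In Stage~1, it is not true that \emph{any} minimal idempotent collapses a proximal fiber; rather, Lemma~\ref{clay} gives that $(x_1,\ldots,x_N)$ is proximal to the diagonal $(x_1,\ldots,x_1)$, so some net $t_\alpha$ sends $A_0$ to a singleton in $2^X$---that already contradicts minimality of $A_0$ via the closed set $M_1(X)$. In Stage~2, your idempotent bookkeeping (``$uW=uW^\flat=W^\flat$'') is in the wrong order: you should first take $u$ with $uW=W^\flat$ (which exists because $W$ is proximal to the minimal $W^\flat$), and then $uW^\flat=W^\flat$ follows. But since you invoke Lemma~\ref{Auslander} anyway, the idempotent discussion is dispensable.
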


\begin{proof}
Since $\pi$ is proximal but not almost one-to-one,
by Proposition \ref{almost N-1} $\pi$ is not almost finite to one.
Thus by Lemma \ref{lem-dist} for each fixed $r\ge 2$, there exists a constant $\d>0$ such that for all $y\in Y$,
there are $x_1,\ldots, x_r\in \pi^{-1}(y)$ with $$\min_{1\le i\neq j\le r} d(x_i, x_j)>3\d.$$

\medskip

\noindent {\bf Claim:} {\em For any non-empty open subset $U$ of $X$ and $F=\{t_1,t_2,\ldots, t_L\}\in T$
with $L\in \N$, there is some $m\in T$ such that $$mF=\{mt_1,mt_2,\ldots, mt_L\} \subseteq N(U,\d;r),$$ i.e. $N(U,\d;r)$ is thick.}


\begin{proof}[Proof of Claim.] Since $(X,T)$ is minimal, $\pi$ is semi-open. Hence for each
$i\in \{1,2,\ldots, L\}$, ${\rm int}(\pi(t_iU))\neq \emptyset$ and let
$y_i\in {\rm int}(\pi(T^iU))$. Choose points $z_{i1}, z_{i2},\ldots, z_{ir}\in \pi^{-1}(y_i)$ such that
\begin{equation*}
  \min_{1\le j\neq k\le r} d(z_{ij}, z_{ik})>3\d.
\end{equation*}
For $1\le i\le L$ and $1\le t\le r$, set
\begin{equation*}
  W_{it}=B(z_{it},\d)\cap \pi^{-1}({\rm int}(\pi(t_iU))),
\end{equation*}
where $B(x, a)=\{y\in X: d(x,y)<a\}$.

Since $(X,T)$ is minimal, the set of minimal points in $rL$ product system $(X^{rL},T)$ is dense.
Choose a minimal point $(p_{it})_{1\le i\le L\atop 1\le t\le r}$ in $\displaystyle \prod_{1\le i\le L\atop 1\le t\le r}W_{it}$.
Then by the definition of $W_{it}$, for each $1\le i\le L$ and $1\le t\le r$ there is some $x_{it}\in U$ such that
\begin{equation*}
  \pi(t_ix_{it})=\pi(p_{it}).
\end{equation*}
By Lemma \ref{clay}, in the product system $(X^{rL},T)$, points $(p_{it})_{1\le i\le L\atop 1\le t\le r}$
and $(t_ix_{it})_{1\le i\le L\atop 1\le t\le r}$ are proximal. Thus it follows from
Lemma \ref{Auslander} that there is some $m\in T$ such that
\begin{equation*}
  m\big((t_ix_{it})_{1\le i\le L\atop 1\le t\le r}\big)\in \prod_{1\le i\le L\atop 1\le t\le r}W_{it}.
\end{equation*}
That is,
\begin{equation*}
  mt_ix_{it}\in W_{it}, \ \forall i\in \{1,\ldots, L\}, t\in \{1,\ldots, r\}.
\end{equation*}
By the construction of $W_{it}$, one has that for all $i\in \{1,\ldots, L\}$
\begin{equation*}
  \min_{1\le j\neq k\le r} {\rm dist} (W_{ij},W_{ik}) >\d.
\end{equation*}
It follows that for all $i\in \{1,\ldots, L\}$
\begin{equation*}
  \min_{1\le j\neq k\le r} d (mt_ix_{ij},mt_ix_{ik}) >\d,
\end{equation*}
which means that $mF =\{mt_1,mt_2,\ldots, mt_L\} \subseteq N(U,\d;r)$.
\end{proof}

Now we show that $(X,T)$ is multiple $r$-sensitive. Let $U_1, U_2,\ldots, U_k$ be non-empty open
subsets of $X$. Fix a point $x_0\in X$. Since $(X,T)$ is minimal, there exist $t_1,t_2,\ldots, t_k\in T$
such that ${t_1}x_0\in U_1, {t_2}x_0\in U_2, \ldots, {t_k}x_0\in U_k$. Thus there is some open neighborhood $U$ of $x_0$ such that
\begin{equation*}
  {t_1}U\subseteq U_1,{t_2}U\in U_2,\ldots, {t_k}U\subseteq U_k.
\end{equation*}
By Claim, $N(U,\d;r)$ is thick, there is some $m\in T$ such that
\begin{equation*}
  \{mt_1,mt_2,\ldots, mt_k\}\subseteq N(U,\d;r).
\end{equation*}
It is easy to verify that
\begin{equation*}
  m\in \bigcap_{i=1}^kN(U_i,\d;r).
\end{equation*}
Thus $(X,T)$ is multiple $r$-sensitive. The proof is completed.
\end{proof}

For $n \geq 2$ one
writes $(X^n,T)$ for the $n$-fold product system $(X\times
\cdots \times X,T)$.

\begin{lem}\label{goodman} \cite[Proposition 2.4]{Go}
For a t.d.s. $(X,T)$ with $T$ abelian,
and any sequence $A\subset T$ we have
$$h_A(X^n,T)=nh_A(X,T)$$ for any $n\in\N$.
\end{lem}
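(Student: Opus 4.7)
My plan is to prove the equality by establishing the two opposite inequalities, in close analogy with the classical product formula for topological entropy.

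For the upper bound $h_A(X^n,T) \leq n\, h_A(X,T)$, by compactness of $X^n$ every finite open cover $\mathcal{W}$ of $X^n$ is refined by a product cover $\mathcal{U}^{(n)} = \{U_1 \times \cdots \times U_n : U_j \in \mathcal{U}\}$ for some finite open cover $\mathcal{U}$ of $X$. Since the $T$-action on $X^n$ is diagonal, for each $k$ one has
$$\bigvee_{i=1}^{k} t_i^{-1}\mathcal{U}^{(n)} \;=\; \Bigl(\bigvee_{i=1}^{k} t_i^{-1}\mathcal{U}\Bigr)^{(n)},$$
and for any finite open cover $\alpha$ of $X$ the obvious product subcover yields $N(\alpha^{(n)}) \leq N(\alpha)^n$. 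Taking logarithms, dividing by $k$ and passing to $\limsup$ gives $h_A(T,\mathcal{U}^{(n)}) \leq n\, h_A(T,\mathcal{U})$, whence $h_A(T,\mathcal{W}) \leq h_A(T,\mathcal{U}^{(n)}) \leq n\, h_A(X,T)$. Taking the supremum over $\mathcal{W}$ finishes this direction.

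For the lower bound $h_A(X^n,T) \geq n\, h_A(X,T)$, the reverse multiplicativity $N(\alpha^{(n)}) \geq N(\alpha)^n$ can actually fail (for instance, the cover $\alpha = \{\{a,b\},\{b,c\},\{a,c\}\}$ of $\{a,b,c\}$ has $N(\alpha)=2$ but $N(\alpha^{(2)})=3$), so a direct open-cover argument is unavailable. Instead, I would pass to the equivalent separating-set formulation
$$h_A(X,T) \;=\; \lim_{\epsilon \to 0^+}\ \limsup_{k\to\infty}\ \tfrac{1}{k}\log s_k^A(X,\epsilon),$$
where $s_k^A(X,\epsilon)$ is the maximal cardinality of a set $E \subseteq X$ pairwise separated by $\epsilon$ in the metric $d_k^A(x,y) = \max_{1\le i\le k} d(t_ix,t_iy)$. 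Equipping $X^n$ with the sup metric, if $E \subseteq X$ is $(k,A,\epsilon)$-separated then the product $E^n \subseteq X^n$ is $(k,A,\epsilon)$-separated for the diagonal $T$-action, so $s_k^A(X^n,\epsilon) \geq s_k^A(X,\epsilon)^n$. Taking logarithms, dividing by $k$, and passing to $\limsup$ and then $\epsilon \to 0^+$ yields the required bound.

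The main obstacle is justifying the equivalence of the open-cover and separating-set formulations of sequence entropy, since this is the only definition given in the paper. This parallels the classical argument for topological entropy (comparing $N(\bigvee_i t_i^{-1}\mathcal{U})$ with $s_k^A(X,\epsilon)$ via a Lebesgue number of $\mathcal{U}$ and the uniform continuity of each $t_i$) and extends routinely from $\mathbb{Z}$-actions to arbitrary discrete group actions on a compact metric space. With this equivalence in hand, the two bounds above combine to give $h_A(X^n,T) = n\, h_A(X,T)$.
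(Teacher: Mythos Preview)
The paper does not prove this lemma at all; it merely cites \cite[Proposition 2.4]{Go}, so there is no in-paper argument to compare against. Your proof is correct and is essentially the standard one: the upper bound via product covers and the submultiplicativity $N(\alpha^{(n)}) \le N(\alpha)^n$, and the lower bound via product separated sets after passing to the Bowen--Dinaburg formulation. Your observation that $N(\alpha^{(n)}) \ge N(\alpha)^n$ can fail is a nice touch, and your remedy through separated sets is exactly right. The equivalence between the open-cover and separated-set definitions carries over verbatim from the classical $\Z$-entropy setting to any discrete group action on a compact metric space (each $t\in T$ is a homeomorphism, hence uniformly continuous, and the usual Lebesgue-number comparison goes through), so that step is routine as you say. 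Incidentally, nothing in your argument uses that $T$ is abelian; the hypothesis is inherited from the source \cite{Go} but is not needed for this particular identity.
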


\begin{lem}\label{Goodman1}\cite[Proposition 2.5]{Go}
Let $\pi: (X,T)\rightarrow (Y,T)$ be an extension with $T$ abelian and $A\subset T$ be a sequence. If $\pi$ is at most $N$ finite to one, i.e. $|\pi^{-1}(y)|\le N$ for all $y\in Y$, then $h_A(X,T)\le h_A(Y,T)+\log N$.
\end{lem}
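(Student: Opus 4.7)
The plan is to carry out the classical Bowen--Goodman argument relating the minimal subcover cardinality of a product cover on $X$ to that on $Y$, losing only a multiplicative factor of $N^n$ at stage $n$. The whole argument is a pointwise combinatorial estimate; commutativity of $T$ plays no role and is merely inherited from the standing hypothesis.

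First, given any finite open cover $\mathcal{U}=\{U_1,\ldots, U_m\}$ of $X$, I would construct an auxiliary finite open cover $\mathcal{V}$ of $Y$ tailored to $\mathcal{U}$. For each $y\in Y$ the fiber $\pi^{-1}(y)$ has at most $N$ points and hence lies in the union of at most $N$ members $U_{i_1(y)},\ldots, U_{i_N(y)}$ of $\mathcal{U}$ (padding the list with repetitions if necessary). Since $\pi$ is a closed map (as a continuous map between compact Hausdorff spaces), the set
$$W(y)=Y\setminus \pi\Bigl(X\setminus \bigcup_{l=1}^N U_{i_l(y)}\Bigr)$$
is an open neighborhood of $y$ whose preimage still satisfies $\pi^{-1}(W(y))\subseteq \bigcup_{l=1}^N U_{i_l(y)}$. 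By compactness of $Y$, choose a finite subfamily $\mathcal{V}=\{W(y_1),\ldots, W(y_k)\}$ that covers $Y$.

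Second, I would estimate atoms of the $n$-fold join pulled back from $\mathcal{V}$ by atoms of the $n$-fold join of $\mathcal{U}$. For any $(a_1,\ldots, a_n)\in\{1,\ldots, k\}^n$,
$$\bigcap_{j=1}^n t_j^{-1}\pi^{-1}(W(y_{a_j}))\subseteq \bigcup_{(l_1,\ldots, l_n)\in\{1,\ldots, N\}^n}\,\bigcap_{j=1}^n t_j^{-1}U_{i_{l_j}(y_{a_j})},$$
so every atom of $\bigvee_{j=1}^n t_j^{-1}\pi^{-1}(\mathcal{V})$ is covered by at most $N^n$ atoms of $\bigvee_{j=1}^n t_j^{-1}\mathcal{U}$. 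Replacing each atom in a minimal subcover of the former by the (at most) $N^n$ atoms that cover it, and using that $\pi$ being surjective gives $N(\pi^{-1}(\mathcal{W}))=N(\mathcal{W})$ for any cover $\mathcal{W}$ of $Y$, yields the bound
$$N\Bigl(\bigvee_{j=1}^n t_j^{-1}\mathcal{U}\Bigr)\le N^n\cdot N\Bigl(\bigvee_{j=1}^n t_j^{-1}\mathcal{V}\Bigr).$$

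Third, dividing by $n$, taking $\limsup_{n\to\infty}$, and then the supremum over all finite open covers $\mathcal{U}$ of $X$ delivers $h_A(X,T)\le h_A(Y,T)+\log N$. There is no real obstacle; the only mildly delicate points are invoking closedness of $\pi$ to produce the neighborhoods $W(y)$ and keeping clean book-keeping of the $N^n$ combinatorial factor arising from distributing intersection over union.
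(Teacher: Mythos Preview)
Your argument is correct and is precisely the classical Bowen--Goodman proof from \cite{Go}. Note that the paper itself does not supply a proof of this lemma; it merely cites \cite[Proposition 2.5]{Go}, so there is nothing in the paper to compare against beyond the original source, which your argument faithfully reproduces.
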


Now we are ready to show prove Theorem \ref{Main-bd}.

\begin{proof}[Proof of Theorem \ref{Main-bd}]
First by  Proposition \ref{MS-main}, we have that $(X,T)$ has the following diagram:
\begin{equation*}
  \xymatrix{
  X \ar[d]_{\pi}
                & X' \ar[d]^{\pi'} \ar[l]_{\sigma'} \\
  Y=X_{eq}
                & Y'   \ar[l]_{\tau'}          }
\end{equation*}
where $\sigma'$ and $\tau'$ are proximal extensions, $\pi'$ is a
$N$ to one extension for some $N\in \N$ and $\pi$ is the maximal
equicontinuous factor.
Now we show that $\tau'$ and $\sigma'$ is almost one-to-one, and hence $\pi$ is almost finite to one.

\medskip

Assume that $\tau'$ is not almost one-to-one, then by Proposition \ref{proximal-extension}, $h_\infty(Y')=\infty$.
By the construction of diagram in Proposition \ref{MS-main} (See \cite{MS} for details), every point of
$Y'\subseteq 2^X$ consists of $N$ distinct elements of $X$.
Let $M_N(X)=\{A\in 2^X: |A|\le N\}$. Then $M_N(X)$ is a closed subset of $2^X$.
It is clear that $Y'\subset M_N(X)$ and thus $h_\infty(M_N(X),T)=\infty $.

Define $p:X^N\rightarrow M_N(X)$ such that $p\left((x_1,\ldots,x_N)\right)=\{x_1,\ldots,x_N\}$. We have
the following commuting diagram

\begin{equation*}
  \xymatrix{
  X^N \ar[d]_{p} \ar[r]^{T}
                 & X^N \ar[d]^{p} \\
  M_N(X) \ar[r]^{T}
                   & M_N(X)  }
\end{equation*}

This implies that $h_\infty(X^N, T)=\infty$, a contradiction by Lemma \ref{goodman}.
Thus, $\tau'$ is almost one to one and $h_\infty(Y')<\infty$.

Since $\pi'$ is finite to one, by Lemma \ref{Goodman1} $h_\infty(X' )\le h_\infty(Y')+\log N<\infty$. By Proposition \ref{proximal-extension},
$\sigma'$ is also almost one to one. Thus $\pi$ is almost finite to one.
The proof is completed.
\end{proof}





A minimal system $(X,T)$ is called {\em pointed distal} if there is some point $x_0\in X$, the only point
proximal to $x_0$ is itself. By Veech's structure for pointed distal systems and Theorem \ref{Main-bd},
any bounded minimal system under abelian group action is pointed distal. Thus we have

\begin{cor}
Let $(X,T)$ be a minimal system with $T$ abelian. If $(X,T)$ is not pointed distal, then $h_\infty(X,T)=\infty$.
\end{cor}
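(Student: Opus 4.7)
The plan is to prove the contrapositive: assume $h_\infty(X,T)<\infty$ (i.e.\ $(X,T)$ is bounded) and deduce that $(X,T)$ is pointed distal. By Theorem~\ref{Main-bd}, the factor map $\pi\colon (X,T)\to (X_{eq},T)$ is almost finite to one, so by Proposition~\ref{almost N-1} there exist $N\in\N$ and $y_0\in X_{eq}$ with $|\pi^{-1}(y_0)|=N$ such that $\pi^{-1}(y_0)=\{x_1,\ldots,x_N\}$ is a minimal point of $(2^X,T)$. Let $(Y',T)$ denote the orbit closure of $\{x_1,\ldots,x_N\}$ in $(2^X,T)$; by Corollary~\ref{cor-f} every element of $Y'$ has cardinality exactly $N$.

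The core step will be to show that the points $x_1,\ldots,x_N$ are pairwise distal. Suppose some pair $(x_i,x_j)$ with $i\neq j$ were proximal. After passing to a subnet we can find $\{t_\alpha\}\subset T$ such that $t_\alpha x_i\to z$ and $t_\alpha x_j\to z$ for a common $z\in X$, and such that $t_\alpha x_k\to z_k$ for every $k\in\{1,\ldots,N\}$. By Lemma~\ref{lem-f}, $t_\alpha\{x_1,\ldots,x_N\}\to\{z_1,\ldots,z_N\}$ in the Hausdorff metric, and this limit lies in $Y'$. But $z_i=z_j=z$ forces $|\{z_1,\ldots,z_N\}|\le N-1$, contradicting Corollary~\ref{cor-f}. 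Hence distinct members of $\pi^{-1}(y_0)$ are pairwise distal.

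To finish, take any $y\in X$ proximal to $x_1$. Since proximal pairs project to proximal pairs under the factor map $\pi$ and $(X_{eq},T)$ is equicontinuous (hence distal), we get $\pi(y)=\pi(x_1)=y_0$, so $y\in\{x_1,\ldots,x_N\}$. By the pairwise distality just established, $y=x_1$. This shows $x_1$ has singleton proximal cell, so $(X,T)$ is pointed distal, completing the contrapositive.

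The main potential obstacle is justifying the cardinality-drop contradiction in the middle step; this relies crucially on the fact that the minimal fiber $\pi^{-1}(y_0)$ is a genuine minimal point of $(2^X,T)$ (so its entire Hausdorff orbit closure consists of $N$-element sets), which is exactly the content of Proposition~\ref{almost N-1} together with Corollary~\ref{cor-f}. Everything else is routine once this rigidity of fiber cardinality along Hausdorff limits is in hand; no appeal to Veech's general structure theorem is needed in this formulation.
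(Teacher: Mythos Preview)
Your argument is correct and is genuinely different from the paper's one-line proof, which simply combines Theorem~\ref{Main-bd} with Veech's structure theorem for point-distal flows. You instead give a direct, self-contained hyperspace argument: from the minimality of $\pi^{-1}(y_0)$ in $(2^X,T)$ you deduce that no two points of this fibre can be proximal (otherwise a Hausdorff limit would have fewer than $N$ elements, contradicting Corollary~\ref{cor-f}), and then the distality of $X_{eq}$ forces any point proximal to $x_1$ to lie in this fibre, hence to equal $x_1$. This avoids the heavy machinery of Veech's theorem and makes the corollary essentially an immediate consequence of the tools already developed in the paper. One small point worth noting explicitly: your $Y'$ (the orbit closure of $\{x_1,\ldots,x_N\}$) coincides with the paper's $Y'$ because $\pi^{-1}(y_0)$ is a minimal point of $\widetilde{Y}$ and $Y'$ is the unique minimal subsystem there; once that identification is made, Corollary~\ref{cor-f} applies as you use it.
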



\bigskip

\appendix
\section{The proof of Proposition \ref{independent sets}}
The proof of  Proposition \ref{independent sets} basically is similar to the one of
\cite[Proposition 3.3.]{FGJO}. For the completeness, we include a proof.

From now on, $k$ is a fixed natural number with $k\ge2$.
Let $\Sigma_{n}=\{1,\ldots,k\}^{n}$ and $\Sigma_{*}=\bigcup_{n\in\mathbb{N}}\Sigma_{n}$.
Denote by $|a|$ the length of a word $a\in\Sigma_{*}$. Let $H$
be a locally compact second countable Hausdorff topological group
with left Haar measure $\Theta_H$. By Birkhoff-Kakutani theorem there exists a left invariant metric $d$ (see \cite{Struble} for example). Let
$e$ be the unit element.

Let $\Theta_{H}^{r}$ be the right Haar measure on $H$. If $C\subset H$
is a comapct set with positive measure and we set
\[
\eta^{C}(\epsilon)=\frac{\Theta_{H}^{r}(B_{\epsilon}(C))}{\Theta_{H}^{r}(C)}-1,
\]
where $B_\ep(C)=\{x\in H: d(x,c)<\ep\}$. Since $\Theta$ is regular,
$\lim_{\epsilon\rightarrow0}\eta^{C}(\epsilon)=0$.

\medskip

When $k=2$, the following lemma is Lemma 3.5. in \cite{FGJO}.

\begin{lem}\label{lem:tech lemma for finite k}
Suppose that $C\subset H$ is
a compact set with $\Theta_{H}^{r}(C)>0$ and $\{\xi_{a}\}_{a\in\sum_{*}}$
is a family of elements $\xi_{a}\in H$. Let $\{\epsilon_{n}\}_{n\in\mathbb{N}}$
be a sequence of positive real numbers such that
\[
\epsilon_{n}\geq\sup_{a\in\sum_{n}}d(e,\xi_{a}).
\]
For $j\in\mathbb{N}$, $n\in\mathbb{N}\cup\{\infty\}$, let $\delta_{j}^{n}=\sum_{\ell=j}^{n}\epsilon_{\ell}$.
Further, given $n\in\mathbb{N}$ and $a\in\Sigma_{n}$, let $\gamma_{a}=\xi_{a_1}
\xi_{a_1a_2}\ldots\xi_{a_1a_2\ldots a_n}=\prod_{j=1}^{n}\xi_{a_{1},\ldots,a_{j}}$.
Then for each $n\in\mathbb{N}$, we have

\begin{equation}
\Theta_{H}^{r}\left(\bigcap_{a\in\Sigma_{n}}C\gamma_{a}^{-1}\right)
\geq\Theta_{H}^{r}(C)\left(1-\sum_{j=1}^{n}k^{j-1}(k-1)\eta^{C}(\delta_{j}^{n})\right).\label{eq: ineq for Haar}
\end{equation}
\end{lem}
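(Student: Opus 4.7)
The plan is a clean induction on $n$, starting with the trivial case $n=0$ where $\Sigma_0$ reduces to the empty word, $\gamma_{\emptyset} = e$, and both sides of the inequality equal $\Theta_H^r(C)$. All the substance lies in the inductive step, which I will set up by peeling off the \emph{first} letter of each word (rather than the last), so that the inner intersection can be matched to an instance of the lemma with a shifted $\epsilon$-sequence.

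Before entering the induction, I would record two geometric facts about the left-invariant metric $d$ on $H$. Left invariance gives $d(c, c\gamma^{-1}) = d(e, \gamma^{-1}) = d(e, \gamma)$, so $C\gamma^{-1} \subset B_\epsilon(C)$ whenever $d(e, \gamma) \leq \epsilon$; and $d$ is subadditive under the group operation, $d(e, xy) \leq d(e, x) + d(e, y)$. Applied to $\gamma_a = \prod_{j=1}^n \xi_{a_1 \ldots a_j}$, the latter yields $d(e, \gamma_a) \leq \delta_1^n$ for every $a \in \Sigma_n$, and hence the ambient containment $C\gamma_a^{-1} \subset B_{\delta_1^n}(C)$. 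The right Haar invariance $\Theta_H^r(C\gamma^{-1}) = \Theta_H^r(C)$ will be used repeatedly.

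For the inductive step from $n-1$ to $n$, fix $i \in \{1,\ldots,k\}$ and introduce the shifted family $\xi^{(i)}_b := \xi_{ib}$ for $b \in \Sigma_*$, whose $\epsilon$-bounds are the shifted sequence $(\epsilon_2, \epsilon_3, \ldots)$. A direct computation gives $\gamma_{ib} = \xi_i \gamma^{(i)}_b$, hence $C\gamma_{ib}^{-1} = C(\gamma^{(i)}_b)^{-1} \xi_i^{-1}$. Writing $S^{(i)} := \bigcap_{b \in \Sigma_{n-1}} C(\gamma^{(i)}_b)^{-1}$, this yields
\begin{equation*}
  \bigcap_{a \in \Sigma_n} C\gamma_a^{-1} \;=\; \bigcap_{i=1}^{k} S^{(i)}\xi_i^{-1}.
\end{equation*}
The inductive hypothesis applied to $\{\xi^{(i)}_\cdot\}$ at length $n-1$ produces $\Theta_H^r(S^{(i)}) \geq \Theta_H^r(C)(1 - \beta)$, where $\beta := \sum_{j=1}^{n-1} k^{j-1}(k-1)\eta^C(\delta_{j+1}^n)$ (the shift $\delta^{(i)}_{j,n-1} = \delta_{j+1}^n$ comes from the reindexing of the $\epsilon$-sequence), and right invariance transports this bound to $S^{(i)}\xi_i^{-1}$.

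To conclude, I observe that all $k$ sets $S^{(i)}\xi_i^{-1}$ sit inside the common ambient set $B_{\delta_1^n}(C)$ of measure $\Theta_H^r(C)(1 + \eta^C(\delta_1^n))$, so the elementary inclusion bound $\Theta_H^r(\bigcap_i A_i) \geq \sum_i \Theta_H^r(A_i) - (k-1)\Theta_H^r(E)$ (valid whenever $A_i \subset E$) gives
\begin{equation*}
  \Theta_H^r\Bigl(\bigcap_{a\in\Sigma_n} C\gamma_a^{-1}\Bigr) \;\geq\; k\Theta_H^r(C)(1-\beta) - (k-1)\Theta_H^r(C)\bigl(1+\eta^C(\delta_1^n)\bigr).
\end{equation*}
Distributing and reindexing $j \mapsto j+1$ in $k\beta$ absorbs the stray $(k-1)\eta^C(\delta_1^n)$ term as the $j=1$ summand, collapsing everything into $\sum_{j=1}^n k^{j-1}(k-1)\eta^C(\delta_j^n)$, which closes the induction. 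No step here is genuinely difficult; the only point demanding attention is the left-peeling choice, which makes the $\epsilon$-bounds for the auxiliary family $\{\xi^{(i)}_\cdot\}$ line up cleanly with the $\delta_{j+1}^n$ appearing in the target formula.
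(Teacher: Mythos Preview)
Your proof is correct and follows essentially the same approach as the paper's: induction on $n$, peeling off the first letter to define the shifted families $\xi^{(i)}_b = \xi_{ib}$, applying the inductive hypothesis with the reindexed $\epsilon$-sequence to each $S^{(i)}$ (the paper calls it $J_i$), and then bounding the intersection via the common ambient set $B_{\delta_1^n}(C)$. The only cosmetic difference is that the paper takes $n=1$ as the base case and carries out the inclusion argument there explicitly, whereas you start from the trivial $n=0$ case; either choice works.
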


\begin{proof}
We prove the lemma by induction on $n$.
When $n=1$, note that $\{\xi_{a}\}_{|a|=1}=\{\xi_{i}\}_{1\leq i\leq k}$. For $1\le i\le k$, one has that
$$d(e, \gamma_{i}^{-1})=d(e,\gamma_{i})=d(e,\xi_i)\leq\epsilon_{1}.$$
Thus $\bigcup_{i=1}^k C\gamma_{i}^{-1}\subset B_{\delta_{1}^{1}}(C)$, where $\d_1^1=\ep_1$.
As $\Theta_{H}^{r}(B_{\delta_{1}^{1}}(C))=(1+\eta^{C}(\delta_{1}^{1}))\Theta_{H}^{r}(C)$, one has that
\[
\Theta_{H}^{r}\left(B_{\delta_{1}^{1}}(C)\setminus C\gamma_{i}^{-1}\right)=\eta^{C}(\delta_{1}^{1})\Theta_{H}^{r}(C).
\]
Thus one has that
\[
\Theta_{H}^{r}\left(B_{\delta_{1}^{1}}(C)\setminus\left(\bigcap_{i=1}^{k}C\gamma_{i}^{-1}\right)\right)
=\Theta_{H}^{r}\left(\bigcup_{i=1}^{k}\left(B_{\delta_{1}^{1}}(C)
\setminus C\gamma_{i}^{-1}\right)\right)\leq k\eta^{C}(\delta_{1}^{1})\Theta_{H}^{r}(C),
\]
and therefore
\begin{equation*}
\begin{split}
 \Theta_{H}^{r}\left(\bigcap_{i=1}^{k}\left(C\gamma_{i}^{-1}\right)\right)&
  \geq\Theta_{H}^{r}\left(B_{\delta_{1}^{1}}(C)\right)-k\eta^{C}(\delta_{1}^{1})\Theta_{H}^{r}(C)
 \\
 &= \Theta_{H}^{r}(C)\left(1-(k-1)\eta^{C}(\delta_{1}^{1})\right).
\end{split}
\end{equation*}
The base case is done.

\medskip

Now suppose that Equation \eqref{eq: ineq for Haar} holds for $n$,
all sets $C\subset H$ and all collection $\{\xi_{a}\}_{a\in\sum_{*}}$
and $\{\epsilon_{n}\}_{n\in\mathbb{N}}$ as above. Let $\xi_{a}^{(i)}=\xi_{ia}$ and
$\gamma_{a}^{(i)}=\xi_i^{-1}\cdot \gamma_{ia}$ for all $1\leq i\leq k$ and $a\in \Sigma_*$. Notice
that
\[
\{\gamma_{a}\}_{a\in\Sigma_{n+1}}=\{\xi_{i}\gamma_{a}^{(i)}\}_{1\leq i\leq k,a\in\Sigma_{n}}.
\]
Thus it holds that
\[
\bigcap_{a\in\Sigma_{n+1}}C\gamma_{a}^{-1}=\bigcap_{1\leq i\leq k}\left(\left(\bigcap_{a\in
\Sigma_{n}}C(\gamma_{a}^{(i)})^{-1}\right)\xi_{i}^{-1}\right).
\]
Let $\displaystyle J_{i}=\left(\bigcap_{a\in\Sigma_{n}}C(\gamma_{a}^{(i)})^{-1}\right)$.
By the induction hypothesis, one has that
\[
\Theta_{H}^{r}\left(J_{i}\xi_{i}^{-1}\right)=\Theta_{H}^{r}\left(J_{i}\right)
\geq\Theta_{H}^{r}(C)\left(1-\sum_{j=1}^{n}k^{j-1}(k-1)\eta^{C}(\delta_{j+1}^{n+1})\right).
\]
Notice that $J_{i}\xi_{i}^{-1}\subset B_{\delta_{1}^{n+1}}(C)$ for all $1\le i\le k$. Therefore, one has that
\begin{equation*}
\begin{split}
  &\hskip0.5cm  \Theta_{H}^{r}\left(\bigcap_{a\in\Sigma_{n+1}}C\gamma_{a}^{-1}\right) =\Theta_{H}^{r}
   \left({\displaystyle \bigcap_{1\leq i\leq k}}\left(J_{i}\xi_{i}^{-1}\right)\right)\\
  & =\Theta_{H}^{r}(B_{\delta_{1}^{n+1}}(C))-\Theta_{H}^{r}\left(B_{\delta_{1}^{n+1}}(C)
   \setminus\left({\displaystyle \bigcap_{1\leq i\leq k}}J_{i}\xi_{i}^{-1}\right)\right)\\
 & =\Theta_{H}^{r}(B_{\delta_{1}^{n+1}}(C))-\Theta_{H}^{r}\left({\displaystyle
 \bigcup_{1\leq i\leq k}}\left(B_{\delta_{1}^{n+1}}(C)\setminus J_{i}\xi_{i}^{-1}\right)\right)\\
 & \geq\Theta_{H}^{r}(B_{\delta_{1}^{n+1}}(C))-{\displaystyle \sum_{i=1}^{k}}\Theta_{H}^{r}
 \left(B_{\delta_{1}^{n+1}}(C)\setminus J_{i}\xi_{i}^{-1}\right)\\
 & =-(k-1)\Theta_{H}^{r}(B_{\delta_{1}^{n+1}}(C))+{\displaystyle \sum_{i=1}^{k}}
 \left(\Theta_{H}^{r}(J_{i}\xi_{i}^{-1})\right)\\
 & \geq\Theta_{H}^{r}(C)\left(k\left(1-\sum_{j=1}^{n}k^{j-1}(k-1)\eta^{C}
 (\delta_{j+1}^{n+1})\right)-(k-1)(1+\eta^{C}(\delta_{1}^{n+1}))\right)\\
 & =\Theta_{H}^{r}(C)\left(1-\sum_{j=1}^{n+1}k^{j-1}(k-1)\eta^{C}(\delta_{j}^{n+1})\right).
\end{split}
\end{equation*}
The proof is completed.
\end{proof}

 Given metric spaces $X$ and $H$, a continuous map $\beta:X\rightarrow H$ is called almost one-to-one if
 $X_0 = \{x\in :\beta^{-1}(\{\beta(x)\}) = \{x\}\}$ is dense in $X$. Points in $X_0$ are called injectivity points of $\beta$.
 Recall that a compact subset $W$ of a metric space  $X$ is called {\it proper} if $\overline{int(W)} = W$.

\begin{lem}\cite[Lemma 2.4]{FGJO} \label{lem:proper one to one}
Suppose that $X,H$ are metric
spaces and $\beta:X\rightarrow H$ is an almost one-to-one continuous map. Then images
of proper subsets of $X$ under $\beta$ are proper subsets of $H$.
\end{lem}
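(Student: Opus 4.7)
The plan is to verify both defining conditions of properness for $\beta(W)$: (i) $\beta(W)$ is compact; and (ii) $\overline{\mathrm{int}(\beta(W))}=\beta(W)$. Item (i) is immediate since $W$ is compact and $\beta$ is continuous, which also makes $\beta(W)$ closed in the metric space $H$. The inclusion $\overline{\mathrm{int}(\beta(W))}\subseteq\beta(W)$ follows at once from closedness, so the crux is the reverse inclusion $\beta(W)\subseteq\overline{\mathrm{int}(\beta(W))}$.

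To prove the reverse inclusion, fix $y_{0}\in\beta(W)$ and any open neighborhood $V$ of $y_{0}$ in $H$; my goal is to exhibit a point of $\mathrm{int}(\beta(W))\cap V$. Pick $x\in W$ with $\beta(x)=y_{0}$. Since $\beta^{-1}(V)$ is open in $X$ and contains $x\in W=\overline{\mathrm{int}(W)}$, the open set $\beta^{-1}(V)\cap\mathrm{int}(W)$ is non-empty. Using density of the set $X_{0}$ of injectivity points of $\beta$, I choose
\[
x_{0}\in X_{0}\cap\mathrm{int}(W)\cap\beta^{-1}(V),
\]
so that $\beta^{-1}(\beta(x_{0}))=\{x_{0}\}$ and this singleton is contained in the open set $U:=\mathrm{int}(W)\cap\beta^{-1}(V)$.

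The key step is to upgrade $\beta(x_{0})\in\beta(W)\cap V$ to $\beta(x_{0})\in\mathrm{int}(\beta(W))\cap V$. For this I invoke upper semi-continuity of the fibre map $y\mapsto\beta^{-1}(y)\cap W$, which holds because $W$ is compact and the restriction $\beta|_{W}$ is therefore closed (from compact to Hausdorff). Since $\beta^{-1}(\beta(x_{0}))\cap W=\{x_{0}\}\subseteq U$, u.s.c.\ produces an open neighborhood $V'\subseteq V$ of $\beta(x_{0})$ in $H$ with $\beta^{-1}(V')\cap W\subseteq U\subseteq W$. Hence every point of $V'\cap\beta(X)$ lies in $\beta(W)$; invoking surjectivity of $\beta$ (as in the paper's applications, where $\beta$ is a factor map between minimal systems so that $\beta(X)=H$), one gets $V'\subseteq\beta(W)$, so $\beta(x_{0})\in\mathrm{int}(\beta(W))\cap V$, as required.

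The main technical point is the conversion of almost-injectivity of $\beta$ at the particular point $x_{0}$ into a genuine open neighborhood of $\beta(x_{0})$ inside $\beta(W)$; this is what forces us to use upper semi-continuity of the fibre map, which in turn requires compactness of $W$. A secondary observation is that for $\beta(W)$ to possibly have any interior in $H$, the image $\beta(X)$ must itself have non-empty interior in $H$; this is automatic in every application made of the lemma, where $\beta$ is surjective.
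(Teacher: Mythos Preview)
The paper does not supply its own proof of this lemma; it merely quotes \cite[Lemma 2.4]{FGJO}. So there is nothing to compare against, and your job is simply to give a correct argument. Your outline is natural and steps (1)--(6) are fine, but the passage from step (7) to step (8) contains a genuine gap.

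From upper semi-continuity of $y\mapsto \beta^{-1}(y)\cap W$ you correctly obtain an open $V'\ni\beta(x_0)$ with $\beta^{-1}(V')\cap W\subseteq U$. You then assert ``hence every point of $V'\cap\beta(X)$ lies in $\beta(W)$''. This does not follow: for $y\in V'$ the containment $\beta^{-1}(y)\cap W\subseteq U$ is perfectly compatible with $\beta^{-1}(y)\cap W=\emptyset$, in which case $y\notin\beta(W)$. Knowing only that $W$ is compact tells you nothing about whether fibres near $\beta(x_0)$ actually meet $W$. Invoking surjectivity of $\beta$ does not help either, since a preimage $z\in\beta^{-1}(y)$ need not lie in $W$.

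The repair is to use upper semi-continuity of the \emph{full} fibre map $y\mapsto\beta^{-1}(y)$, which requires $\beta$ to be a closed map (for instance, $X$ compact --- as indeed holds in every application in the paper, where $\beta=\tau:Y'\to X_{eq}$). Since $x_0\in X_0$, one has $\beta^{-1}(\beta(x_0))=\{x_0\}\subseteq \mathrm{int}(W)$, and closedness of $\beta$ then yields an open $V'\ni\beta(x_0)$ with $\beta^{-1}(V')\subseteq \mathrm{int}(W)$. Now surjectivity gives $V'=\beta(\beta^{-1}(V'))\subseteq\beta(W)$, so $\beta(x_0)\in\mathrm{int}(\beta(W))$, and since $x_0\in\beta^{-1}(V)$ also $\beta(x_0)\in V$. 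Thus the correct extra hypotheses are that $\beta$ be surjective and closed (both automatic in the paper's setting), and the compactness you need is that of $X$, not merely of $W$.
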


\medskip

Now we give the proof of Proposition \ref{independent sets}.

\begin{proof}[Proof of Proposition \ref{independent sets}]
As $\mathcal{G}$ is a residual set, assume that $\mathcal{G}=\bigcap_{n\in\mathbb{N}}G_{n}$,
where each $G_{n}$ is an open and dense subset of $H$. We will construct
an infinite sequence of point $\{t_{n}\}_{n\in\mathbb{N}}$ in $T$, a sequence $\{r(n)\}_{n\in \N}$ of positive real
numbers, a collection $\{\gamma_{a}\}_{a\in\Sigma_{*}}$ in $H$ and a collection $\{U_{a}\}_{a\in\Sigma_{*}}$ of compact subsets of $H$
such that for all $n\in \N$ and $a=a_1a_2\ldots a_n \in \Sigma_n$,

\medskip

\text{($I_n$)}\quad  $U_{a}=\overline{B_{r(n)}(\gamma_{a})}\subset(t_{n}^{-1}\text{int}(V_{a_n}))\cap G_{n}$;

\medskip

\text{($II_n$)}\quad  $\bigcup_{i=1}^k U_{ai}\subset U_{a}$.

\medskip

\text{($III_n$)}\quad  $\sup_{a\in \Sigma_n} d(e,\xi_a)\le \ep_n$, where $\{\xi_{a}\}_{a\in\sum_{*}}$
is a family of elements $\xi_{a}\in H$ defined by  $\gamma_{a}=\xi_{a_1}\xi_{a_1a_2}\ldots\xi_{a_1a_2\ldots a_n}=\prod_{j=1}^{n}\xi_{a_{1},\ldots,a_{j}}$.

\medskip

Once we have ($I_n$),($II_n$) for all $n\in \N$, then it will prove the statement. Firstly, we are to see that $t_i\neq t_j$ for any $i\neq j\in \mathbb{N}$. In fact, given $i\neq j\in \mathbb{N}$. We take $n=\max\{i,j\}$ and $a=a_1a_2\cdots a_n\in \Sigma_n$ with $a_i\neq a_j$. Then by $I_n$ one has $t_i(U_a)\subset \text{int}(V_{a_i})$ and $t_j(U_a)\subset \text{int}(V_{a_j})$.
Combing this with the fact $\text{int}(V_{a_i})\cap \text{int}(V_{a_j})=\emptyset$ (see (ii)),
one has $t_i\neq t_j$.

Next let $I=\{t_n:n\in \N\}$. Then $I$ is an infinite set of $T$. For $a\in \{1,2,\ldots,k\}^I$ and $n\in \N$, Let $a^{(n)}=a_{t_1}a_{t_2}\ldots a_{t_n}$.
By ($II_n$), $\bigcap _{n\in \N} U_{a^{(n)}}$ is a nested intersection of compact sets and therefore non-empty.
By ($I_n$), any $h\in \bigcap_{n\in \N} U_{a^{(n)}}$ has the property that
$h\in \G$ and $t_nh\in {\rm int} (V_{a_{t_n}})$ for all $n\in \N$. Thus one has (\ref{eq: in the int}).

\medskip

We will construct $\{t_{n}\}_{n\in\mathbb{N}}$, $\{r(n)\}_{n\in \N}$, $\{\gamma_{a}\}_{a\in\Sigma_{*}}$ and $\{U_{a}\}_{a\in\Sigma_{*}}$
by induction on $|a|=n$. Let $$C=\bigcap_{1\leq i\leq k}V_{i}.$$ As
$\Theta_{H}$ and $\Theta_{H}^{r}$ are mutually absolutely continuous,
one has that $\Theta_{H}^{r}(C)>0$. We fix a sequence $\{\epsilon_{n}\}_{n\in\mathbb{N}}$ of positive real numbers
such that
\[
\sum_{j=1}^{\infty}k^{j-1}(k-1)\eta^{C}(\delta_{j}^{\infty})<1,
\]
where $\delta_{j}^{n}$ are defined as in Lemma \ref{lem:tech lemma for finite k}.

\medskip

First we show the case $n=1$. Since $T$ is a dense subgroup, for any $\epsilon>0$,
$\bigcup_{t\in T}tB_{\epsilon}(e)=H$. Choose $t_{1}\in T$ such that
$\left(t_{1}B_{\epsilon_{1}}(e)\right)\cap C\neq\emptyset$. As $C\subset V_{i}$
for each $1\leq i\leq k$, one has that
$$\left(t_{1}B_{\epsilon_{1}}(e)\right)\cap V_{i}\neq\emptyset.$$
As $\overline{\text{int}(V_{i})}=V_{i}$, one has that
$$\left(t_{1}B_{\epsilon_{1}}(e)\right)\cap\text{int}(V_{i})\neq\emptyset \ \text{for any}\ 1\leq i\leq k.$$

Since $G_{1}$ is an open dense subset, it follows for $1\leq i\leq k$,
$$G_{1}\cap B_{\epsilon_{1}}(e) \cap t^{-1}_1{\rm int}(V_{i})\neq\emptyset$$
is a non-empty open set.
Now for $1\leq i\leq k$, choose $\gamma_{i}\in H$ and $r(1)>0$ such that
$$U_{i}=\overline{B_{r(1)}(\gamma_{i})}\subset G_{1}\cap B_{\epsilon_{1}}(e) \cap t^{-1}_1{\rm int}(V_{i}).$$
Let $\xi_{a}=\gamma_{a}$ for $a\in\{1,\ldots,k\}$,
one has that
$$\sup_{a\in\Sigma_{1}} d(e,\xi_{a})\leq\epsilon_{1}.$$
Thus the base case is done.

\medskip

Suppose now that $\{t_{i}\}_{i=1}^n$, $\{r(i)\}_{i=1}^n$, $\{\gamma_{a}\}_{a\in\cup_{i=1}^n\Sigma_{i}}$
and $\{U_{a}\}_{a\in\cup_{i=1}^n\Sigma_{i}}$ have been chosen
and satisfy ($I_n$), ($II_n$) and ($III_n$). By
Lemma \ref{lem:tech lemma for finite k} and ($III_n$), one has that
\begin{equation*}
\begin{split}
\Theta_{H}^{r}(\bigcap_{a\in\Sigma_{n}}C\gamma_{a}^{-1}) & \geq \Theta_{H}^{r}(C)\left(1-\sum_{j=1}^{n}k^{j-1}(k-1)\eta^{C}(\delta_{j}^{n})\right) \\
 &\geq \Theta_{H}^{r}(C)\left(1-\sum_{j=1}^{n}k^{j-1}(k-1)\eta^{C}(\delta_{j}^{\infty})\right)>0.
\end{split}
 \end{equation*}
In particular, $\bigcap_{a\in\Sigma_{n}}C\gamma_{a}^{-1}\neq \emptyset$, and choose
$h\in\bigcap_{a\in\Sigma_{n}}C\gamma_{a}^{-1}$. Thus $\gamma_a\in h^{-1}C$
for all $a\in\Sigma_{n}$. Choose $t_{n+1}\in T$ close enough to $h$ and $r'(n+1)<r(n)$ such that
\[
t_{n+1}^{-1}C\cap B_{r'(n+1)}(\gamma_{a})\neq\emptyset \quad \text{ for all} \ a\in\Sigma_{n}.
\]
Since $C=\bigcap_{1\leq i\leq k}V_{i}$ and $V_i=\overline{{\rm int} (V_i)}$ for $1\leq i\leq k$, one has that
$$\left(t_{n+1}^{-1}\text{int}(V_{i})\right)\cap B_{r'(n+1)}(\gamma_{a})\neq \emptyset, \quad 1\le i\le k$$
are non-empty open sets. As $G_{n+1}$ is open and dense, it follows that
\[
\left(t_{n+1}^{-1}\text{int}(V_{i})\right)\cap B_{r'(n+1)}(\gamma_{a})\cap G_{n+1}\neq \emptyset, \quad 1\le i\le k
\]
are non-empty open sets.
Now for each $a\in\Sigma_{n},i\in\{1,\ldots,k\}$, there exists $\gamma_{ai}\in H$ and $r(n+1)>0$
such that
\[
U_{ai}=\overline{B_{r(n+1)}(\gamma_{ai})}\subset\left(t_{n+1}^{-1}\text{int}(V_{i})\right)\cap B_{r'(n+1)}(\gamma_{a})\cap G_{n+1}.
\]
Notice that $$U_{ai}\subset B_{r'(n+1)}(\gamma_{a})\subset\overline{B_{r(n)}(\gamma_{a})}=U_{a},$$
and one has ($I_{n+1}$) and ($II_{n+1}$).
Recall that $\xi_{ai}=\gamma_{a}^{-1}\gamma_{ai}$ for all $a\in\Sigma_{n},i\in\{1,\ldots,k\}$.
One has that for all $a\in\Sigma_{n},i\in\{1,\ldots,k\}$,
$$d(e, \xi_{ai})=d(e, \gamma_a^{-1}\gamma_{ai})=d(\gamma_a,\gamma_{ai})\le r'(n+1)\le \ep_{n+1},$$
which is ($III_n$).
The proof is completed.
\end{proof}


\end{document}